\documentclass[12pt]{report}

\usepackage[margin=1in, headheight=14pt, headsep=0.5in, footskip=0.5in]{geometry}
\usepackage{setspace}
\usepackage{fancyhdr}
\raggedright
\usepackage{graphicx}       
\usepackage{tocloft}        
\usepackage{setspace}       
\usepackage{titlesec}       
\usepackage{indentfirst}

\titleformat{\chapter}
  {\normalfont\bfseries\centering\Huge}
  {Chapter \thechapter : }
  {0em}
  {}                        

\titlespacing*{\chapter}{0pt}{0pt}{1\baselineskip}

\titleformat{\section}
  {\normalfont\bfseries\centering\Large}
  {\Large \thesection}  
  {1em}          
  {}

\titlespacing*{\section}{0pt}{0pt}{1\baselineskip}

\usepackage{amsmath, amssymb, amsthm, changepage, faktor, float, graphicx, caption, subcaption, thmtools, thm-restate, hyperref, cleveref, array, multirow, rotating, hhline}

\usepackage[ruled,vlined]{algorithm2e}

\newtheorem{thm}{Theorem}[chapter]
\newtheorem{lem}[thm]{Lemma}
\newtheorem{cor}[thm]{Corollary}
\newtheorem{prop}[thm]{Proposition}
\theoremstyle{definition}
\newtheorem{remark}[thm]{Remark}

\def\ZZ{{\mathbb Z}}

\def\im{{\textnormal{im}}}

\def\rca{{\textnormal{rca}}}

\def\r{{\tilde{r}}}
\def\s{{\tilde{s}}}

\title{\Huge On Clique Graphs and Clique Regular Graphs \\[1em]
\Large  Honors Capstone Project}
\author{Connor Phillips \\[0.5em]
James Madison University\\
Department of Mathematics and Statistics}
\date{\today}

\begin{document}

\begin{titlepage}
\centering

{\large\bfseries
A Comprehensive Study of Clique Graphs and Clique Regular Graphs
\par}

\vspace{0.1in}
\noindent\rule{2.5in}{0.4pt}

\vspace{0.1in}

{\doublespacing An Honors College Project Presented to\\
the Faculty of the Undergraduate\\
College of Science and Mathematics\\
James Madison University\\}

\vspace{0.1in}
\noindent\rule{2.5in}{0.4pt}

\vspace{0.2in}

by Connor M. Phillips

\vspace{0.1in}

December 2025\\[0.2in]

\noindent\rule{\textwidth}{1.5pt}

{\raggedright\footnotesize\doublespacing Accepted by the Faculty of the Department of Mathematics and Statistics,
James Madison University, in partial fulfillment of the requirements
for the Honors College.\par}

\vspace{0.15in}

\begin{center}
\setlength{\tabcolsep}{15pt}
\renewcommand{\arraystretch}{0.75}
\begin{tabular}{p{2.5in} p{2.5in}}
{\scriptsize\raggedright FACULTY COMMITTEE\par} &  
{\scriptsize\raggedright HONORS COLLEGE APPROVAL\par} \\[0.2cm]

{\centering\rule{2in}{0.4pt}} & 
{\centering\rule{2in}{0.4pt}} \\
{\scriptsize\raggedright Project Advisor: Joshua Ducey, Ph.D.} & 
{\scriptsize\raggedright Bethany Blackstone, Ph.D.} \\
{\scriptsize\raggedright Professor, Pure Math} & 
{\scriptsize\raggedright Dean, Honors College} \\[0.2cm]

{\centering\rule{2in}{0.4pt}} & \\
{\scriptsize\raggedright Reader: Rebecca Field, Ph.D.} & \\
{\scriptsize\raggedright Associate Professor, Pure Math} & \\[0.2cm]

{\centering\rule{2in}{0.4pt}} & \\
{\scriptsize\raggedright Reader: Brant Jones, Ph.D.} & \\
{\scriptsize\raggedright Professor, Pure Math}  &
\end{tabular}
\end{center}

\vspace{0.15in}

\noindent\rule{\textwidth}{1.5pt}

{\raggedright\footnotesize\doublespacing PUBLIC PRESENTATION\par}
{\raggedright\footnotesize\doublespacing Selections of this work were presented at the Shenandoah Undergraduate Mathematics and Statistics Conference on November 2\textsuperscript{nd}, 2024 and November 1\textsuperscript{st}, 2025.\par}

\end{titlepage}
\clearpage

\pagenumbering{arabic}
\setstretch{1.56}
\centering\tableofcontents
\thispagestyle{plain}
\clearpage

\doublespacing
\listoffigures
\thispagestyle{plain}
\clearpage

\raggedright
\setlength{\parindent}{2em}


\chapter*{Acknowledgments}
\addcontentsline{toc}{chapter}{Acknowledgments}
This research was conducted under the supervision of Dr.\ Ducey, whose guidance and mentorship over the past two years have been invaluable. Appreciation is also extended to Dr.\ Field and Dr.\ Jones for their careful review and constructive feedback on this project. Collaboration with Robert Petro during the summer of 2024 contributed to several aspects of the research, and the results in Chapters 1,2,3,6 and section 4.1 have been published with Robert Petro in Discrete Mathematics.
\clearpage

\chapter*{Abstract}
\addcontentsline{toc}{chapter}{Abstract}
If $\Gamma$ is a graph for which every edge is in exactly one clique of order $\omega$, then one can form a new graph with vertex set equal to these cliques.  This is a generalization of the line graph of $\Gamma$.  We discover many general results and classifications related to these clique graphs that will be useful to researchers studying graphs with this property. In particular, we find bounds on the spectrum of $\Gamma$ (with exact results when $\Gamma$ is $k$-regular) and some complete classifications when $\Gamma$ is strongly regular. We apply our results to derive novel information about the existence questions of certain strongly regular graphs. We also examine the critical group of graphs with this property and their associated transformations. Finally, we discuss examples of widely studied families of graphs that have this property, and provide some examples to make the results more concrete. 
\clearpage

\chapter{Introduction}
A famous question in the field of graph theory is: Does there exist a graph on 99 vertices that is 14-regular such that every pair of adjacent vertices is contained in a unique triangle and every pair of non-adjacent vertices is contained in a unique quadrangle? This problem has had a \$1,000 bounty put on a solution by famous mathematician John Conway since 2017, which has yet to be claimed. This question is known as the Conway-99 graph problem and while it is widely studied, there is one approach to this problem that has yet to be explored with much intensity.\par
If such a graph existed, each edge would be contained in exactly one of its $\frac{99\cdot14}{6}=231$ triangles, and so one could construct a new graph by taking those triangles as vertices, and setting two triangles adjacent if and only if they share a vertex. Such a graph would have properties very closely related to those of the original graph, and so could be used in either a construction of the graph or a proof of its non-existence.\par
In general, graphs with the property that each edge is contained in a unique triangle are called \textbf{locally linear}. In this report, we will introduce a generalization of locally linear graphs and a graph transformation that seems very natural to do with graphs that have this property. We will provide many results that relate to these types of graphs and their associated transformation, including results pertaining to the graphs' spectrums and critical groups, and classifications relating to the line graph and strongly regular graphs. Finally, we will discuss concrete examples of families of graphs that have this property which are widely studied in the field of graph theory.\par
But first, we must introduce graph theory and some notation. A \textbf{graph} $\Gamma=(V,E)$ is a finite vertex set $V$, and a finite edge set $E$ containing unordered pairs of vertices. When we write $v\in \Gamma$, it is understood that this means $v$ is a vertex of $\Gamma$. The graphs we discuss here will have neither loops nor parallel edges. We write an edge between two vertices $x$ and $y$ as either $\{x,y\}$ or $xy$. Two vertices are adjacent if there is an edge between them and we denote the neighborhood of vertex $v$ as $N(v)$, the set of vertices adjacent to $v$.\par
The number of vertices that $v$ is adjacent to is called the degree of the vertex and is denoted $d(v)=|N(v)|$. The largest degree of any vertex in $\Gamma$ is denoted $\Delta(\Gamma)$. If every vertex in a graph has the same degree $k$, we say the graph is \textbf{regular}, or more specifically \textbf{$k$-regular}. This is the first of many definitions that we will introduce in this report which include the word regular. We apologize for any confusion this may cause the reader, but all of these definitions are standard in the literature.\par
A \textbf{clique} of order $\omega$ is a subset of $\omega$ vertices such that each vertex is adjacent to every other vertex. The \textbf{clique number} of a graph, denoted $\omega(\Gamma)$, is the order of the largest clique in $\Gamma$. A clique is \textbf{maximal} if it is not contained in a larger clique, and a clique is \textbf{maximum} if it is of order $\omega(\Gamma)$.\par
The complete graph on $n$ vertices, $K_n$, is the graph with $n$ vertices such that each vertex is adjacent to every other vertex. A graph is called bipartite if the vertices can be partitioned into two sets, such that no two vertices in the same set are adjacent. The complete bipartite graph, $K_{n,m}$, is the graph on $n+m$ vertices which are partitioned into two sets of order $n$ and $m$ such that the graph is bipartite on that partition and any two vertices in different sets are adjacent.\par
The \textbf{line graph} of a graph $\Gamma$, $L(\Gamma)$, is the graph with vertices as the edges of $\Gamma$, where two edges are adjacent if and only if they share a vertex in $\Gamma$. We will define more terms and notation as we use them throughout this report. For any other definitions not specified, please see a standard graph theory reference \cite{harary}.\par
\clearpage
\section{Clique Graphs and Clique Regular Graphs}
We introduce an \textbf{$\omega$-clique regular} graph as a graph with a nonempty edge set such that every edge is in a unique clique of order $\omega$. Since a 2-clique is just a single edge every graph with a nonempty edge set is 2-clique regular. Notice also that since a $3$-clique is a triangle, the locally linear graphs are exactly the 3-clique regular graphs. For this reason we consider the clique regular property to be a generalization of the locally linear property, and taking $\omega=3$ for any of our results reveals a fact about locally linear graphs.\par
For graphs with the $\omega$-clique regular property, much of the structure of the graph can be understood by examining its $\omega$-cliques. We develop a very natural graph transformation that can be be applied to $\omega$-clique regular graphs, which reveals the structure of the connections between their $\omega$-cliques. This transformation generalizes the way in which the edges of a graph can be studied using the line graph transformation. \par
We introduce the \textbf{$\omega$-clique graph} of a graph $\Gamma$, which shows the adjacency between the cliques of order $\omega$ in $\Gamma$. This graph is denoted $C_\omega(\Gamma)$ and is defined in the following manner:
\begin{enumerate}
    \item The vertices of the clique graph $C_\omega(\Gamma)$ are the cliques of order $\omega$ in $\Gamma$.
    \item Two distinct cliques are adjacent in $C_\omega(\Gamma)$ if and only if they have a nonempty intersection.
\end{enumerate}
While the $\omega$-clique graph construction can be applied to any graph, it is most useful when the original graph is $\omega$-clique regular. For the most part in this report, we will restrict our focus only to considering the $\omega$-clique graph of $\omega$-clique regular graphs.\par
Since a $2$-clique is just a single edge, it is clear that for any graph $\Gamma$, its line graph is isomorphic to its $2$-clique graph, $C_2(\Gamma) \cong L(\Gamma)$. For this reason, we consider the clique graph construction to be a generalization of the line graph and taking $\omega =2$ for any of our theorems reveals a fact about the line graph of any nonempty graph. Figure \ref{fig:clique} shows an example of a graph that is 3-clique regular and its 3-clique graph.\par
\begin{figure}[htbp]
    \centering
    \begin{subfigure}[c]{0.45\textwidth}
        \centering
        \includegraphics[width=0.8\textwidth]{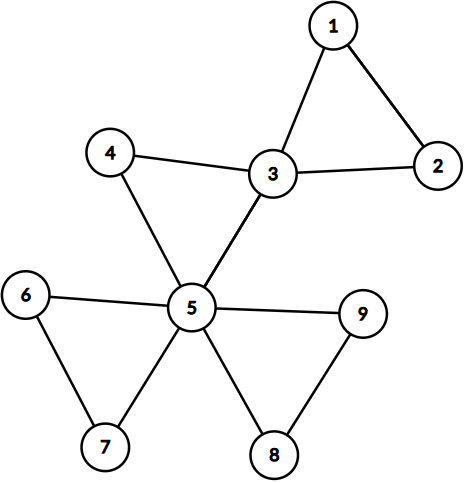}
        \caption{A 3-clique regular graph $\Gamma$}
    \end{subfigure}
    \begin{subfigure}[c]{0.45\textwidth}
        \centering
        \includegraphics[width=0.5\textwidth]{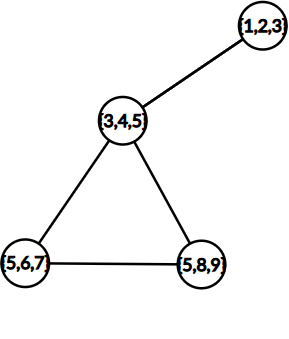}
        \caption{The graph $C_3(\Gamma)$}
    \end{subfigure}
    \caption{An example of a clique regular graph and its clique graph}
    \label{fig:clique}
\end{figure}
Our first proposition will enumerate for some graph $\Gamma$, what values of $\omega$ it could possibly be $\omega$-clique regular for. This will also justify our choice of the notation $\omega$.
\begin{prop}
    If $\Gamma$ is an $\omega$-clique regular graph, then $\omega=2$ or $\omega=\omega(\Gamma)$.
\end{prop}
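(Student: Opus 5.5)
Assume $\Gamma$ is $\omega$-clique regular with $\omega \neq 2$, so $\omega \geq 3$. The goal is to show $\omega = \omega(\Gamma)$. Since $\Gamma$ has a nonempty edge set, there is at least one edge, and that edge lies in some clique of order $\omega$. Hence $\omega \le \omega(\Gamma)$, and it remains to rule out $\omega < \omega(\Gamma)$.

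Suppose, for contradiction, that $\omega < \omega(\Gamma)$. Let $K$ be a maximum clique of $\Gamma$, so $|K| = \omega(\Gamma) \geq \omega + 1$. Since $\omega \ge 3$, $K$ contains at least four vertices. The idea is to produce a single edge of $K$ that lies in two distinct $\omega$-cliques, both inside $K$. Every subset of $K$ is a clique, so any $\omega$-subset of $K$ is an $\omega$-clique of $\Gamma$. Fix two vertices $x, y \in K$, so $xy$ is an edge. The remaining vertices $K \setminus \{x,y\}$ number at least $\omega - 1 \ge 2$, and we need to choose $\omega - 2$ of them to complete $\{x,y\}$ to an $\omega$-clique.

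Since $\omega - 1 > \omega - 2 \ge 1$, there are at least two distinct $(\omega-2)$-subsets $S, T$ of $K\setminus\{x,y\}$. For example, take any $(\omega-1)$-subset and delete two different elements. Then $\{x,y\}\cup S$ and $\{x,y\}\cup T$ are distinct $\omega$-cliques containing the edge $xy$. This contradicts uniqueness, so $\omega = \omega(\Gamma)$.

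The only delicate point is the role of the hypothesis $\omega \ne 2$. When $\omega = 2$ we have $\omega - 2 = 0$, so there is only the empty subset and the argument fails, as it must, since every graph is $2$-clique regular. With $\omega \ge 3$ we need $\omega - 2 \ge 1$ together with at least $\omega - 1$ available vertices, and that supplies the two choices. The case check is routine, so I do not expect any real obstacle beyond stating this counting carefully.
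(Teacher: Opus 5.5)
Your proof is correct and follows essentially the same route as the paper. Both rule out $\omega > \omega(\Gamma)$ because some edge must lie in an $\omega$-clique, and both rule out $2 < \omega < \omega(\Gamma)$ by exhibiting an edge inside a larger clique that lies in at least two $\omega$-subcliques; your construction (take $\omega-1$ vertices of $K\setminus\{x,y\}$ and delete one of two different elements) is exactly the paper's device of deleting a non-endpoint vertex from an $(\omega+1)$-clique containing the edge.
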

\begin{proof}
    Assume $\Gamma$ is $\omega$-clique regular and $\omega \neq 2$. If $2 < \omega < \omega(\Gamma)$, then there exists a clique in $\Gamma$ of order $\omega + 1$ contained within a clique of order $\omega(\Gamma)$. Within this $(\omega +1)$-clique, removing any vertex generates an $\omega$-clique. So every edge is in at least the $\omega -1$ cliques of order $\omega$, that are generated by removing each vertex but the edge's endpoints. Since $\omega-1>1$, these edges are not in unique $\omega$-cliques. If $\omega(\Gamma) < \omega$, then there are no $\omega$-cliques in $\Gamma$, and so no edge is contained in an $\omega$-clique. So $\omega$ must equal $\omega(\Gamma)$.
\end{proof}
To avoid distracting notation throughout this report, we will refer to $\omega$-clique regularity and $\omega$-clique graphs generally without the quantifier $\omega$ when it is clear through context the value in question, or when the specific value is not relevant to topic of discussion.

Notice that the definition of clique regularity is similar to that of a \textbf{regular clique assembly}, as introduced in \cite{guest}. They define a regular clique assembly as a regular graph with clique number at least 2, where every maximal clique is maximum and each edge belongs to exactly one maximum clique. These constructions form a subset of clique-regular graphs, and most of the results in this report only require a graph to be clique-regular rather than a full regular clique assembly. However, in certain specific cases, we will refer to results from \cite{guest} to build upon their findings and establish stronger conclusions about regular clique assemblies. \par
Regular clique assemblies are related to \textbf{edge regular graphs}, denoted erg$(n,k,\lambda)$, which are defined as $k$-regular graphs on $n$ vertices with the property that every pair of adjacent vertices has $\lambda$ common neighbors. Clearly every regular clique assembly $\Gamma$ is $\omega(\Gamma)$-clique regular and the following theorem will classify when clique regular graphs are regular clique assemblies. 

\begin{thm} \label{thm:rca1}
    Suppose $\Gamma$ is an $\omega$-clique regular graph on $n$ vertices. Then $\Gamma$ is a regular clique assembly if and only if $\Gamma$ is an \textnormal{erg}$(n,\Delta(\Gamma),\omega -2)$.
\end{thm}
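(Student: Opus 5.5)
The plan is to prove both directions from one local observation. Let $xy$ be an edge of an $\omega$-clique regular graph $\Gamma$, and let $Q$ be the unique $\omega$-clique containing it. The $\omega-2$ vertices of $Q\setminus\{x,y\}$ are always common neighbours of $x$ and $y$. So $\Gamma$ is edge regular with $\lambda=\omega-2$ exactly when no edge has a common neighbour outside its clique. Both directions reduce to relating this condition to "every maximal clique is maximum." Regularity with degree $\Delta(\Gamma)$ is handled separately: a $k$-regular graph always has $k=\Delta(\Gamma)$.

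($\Rightarrow$) Suppose $\Gamma$ is a regular clique assembly. Then $\Gamma$ is regular, hence $\Delta(\Gamma)$-regular, on $n$ vertices. Take an edge $xy$ with clique $Q$, and suppose $z$ is a common neighbour of $x$ and $y$. Then $\{x,y,z\}$ is a clique, so it lies in some maximal clique $M$. Since $\Gamma$ is a regular clique assembly, $M$ is maximum. Because $M$ contains the edge $xy$ and $xy$ lies in exactly one maximum clique, $M=Q$, so $z\in Q$. Here I will check that $\omega=\omega(\Gamma)$ so that "maximum clique" and "$\omega$-clique" agree. This follows from the earlier Proposition together with the fact that a regular clique assembly is $\omega(\Gamma)$-clique regular; if $\omega=2$, then every edge lies in a unique maximum clique, which is itself. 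Hence the common neighbours of $x$ and $y$ are exactly $Q\setminus\{x,y\}$, and $\lambda=\omega-2$.

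($\Leftarrow$) Suppose $\Gamma$ is an erg$(n,\Delta(\Gamma),\omega-2)$. Then $\Gamma$ is regular, and the edge set is nonempty, so $\Delta(\Gamma)\ge 1$. By the observation above, the common neighbourhood of each edge $xy$ is exactly $Q\setminus\{x,y\}$. Let $M$ be any maximal clique. Since there are no isolated vertices, $|M|\ge 2$, so $M$ contains some edge $xy$. Every other vertex of $M$ is a common neighbour of $x$ and $y$, so $M\subseteq Q$, and maximality forces $M=Q$. Thus every maximal clique is an $\omega$-clique. Consequently $\omega(\Gamma)=\omega\ge 2$, every maximal clique is maximum, and each edge lies in exactly one maximum clique by $\omega$-clique regularity. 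So $\Gamma$ is a regular clique assembly.

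The only delicate point is the bookkeeping in the forward direction: identifying "maximum clique" with "$\omega$-clique," including the degenerate case $\omega=2$, and confirming that the hypotheses of the definition (clique number at least $2$, nonempty edge set) are satisfied. The rest is the direct neighbourhood argument above.
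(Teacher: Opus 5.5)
\textbf{The gap is in the forward direction, in the case $\omega=2$.} The earlier Proposition only gives $\omega\in\{2,\omega(\Gamma)\}$. Your claim that for $\omega=2$ ``every edge lies in a unique maximum clique, which is itself'' does not follow. Every graph with an edge is $2$-clique regular, so $\omega=2$ says nothing about $\omega(\Gamma)$. Your step $M=Q$ then breaks, because $Q=\{x,y\}$ need not be a maximum clique.

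No argument can close this case, because the forward implication is false there. $K_3$ (or the \textnormal{srg}$(9,4,1,2)$) is a regular clique assembly and is $2$-clique regular. Yet adjacent vertices have $\omega(\Gamma)-2\ge 1$ common neighbours, so it is not an \textnormal{erg}$(n,k,0)$. The theorem has to be read with $\omega=\omega(\Gamma)$. This is automatic for $\omega\ge 3$ by the Proposition, and for $\omega=2$ it amounts to $\Gamma$ being triangle-free. The paper assumes this tacitly: its forward direction is a citation to Guest et al., where $\omega$ denotes the clique number. You should impose $\omega=\omega(\Gamma)$ as an explicit hypothesis for the forward direction rather than claim to derive it.

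With that restriction, your forward argument is correct. It is a self-contained replacement for the paper's citation: a common neighbour $z$ puts $\{x,y,z\}$ in a maximal, hence maximum, clique, which must be $Q$.

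Your reverse direction is correct as written and is essentially the paper's argument, made direct rather than by contradiction. Any vertex of a maximal clique $M\ni x,y$ outside $Q$ would be a common neighbour of $x,y$ beyond the $\omega-2$ vertices in $Q$, so $M\subseteq Q$ and hence $M=Q$. You run this uniformly for all $\omega\ge 2$, whereas the paper defers $\omega=2,3$ to the literature. You also state explicitly that every maximal clique has at least two vertices, since $\Gamma$ is $k$-regular with $k\ge1$. The paper uses this silently when it picks $x,y\in c_<$.
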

\begin{proof}
    The proof that any regular clique assembly is an erg$(n,\Delta(\Gamma),\omega -2)$ comes from \cite[p. 304]{guest}. So now assume that $\Gamma$ is $\omega$-clique regular and an erg$(n,k,\omega-2)$ and we will show that $\Gamma$ is a regular clique assembly. If $\omega =2$ or $\omega=3$ this proof also comes from \cite[p. 304]{guest} so assume that $\omega$ is the clique number of $\Gamma$ and is greater than 3. Since $\Gamma$ is regular from the hypothesis, it is sufficient to show that every maximal clique in $\Gamma$ has order $\omega$. Suppose for contradiction there exists a maximal clique of order less than $\omega$, call it $c_<$, and let $x$ and  $y$ be vertices in that clique. Then the edge $xy$ is in a unique clique of order $\omega$, call it $c_\omega$, so $x$ and $y$ have $\omega -2$ neighbors in that clique. Because $c_<$ is maximal, it is not contained in $c_\omega$ and there exists a vertex $z$ in $c_<$ and not in $c_\omega$. So $z$ is also a common neighbor of $x$ and $y$ which implies the order of $N(x)\cap N(y)$ is greater than $\omega -2$, a contradiction.
\end{proof}
We denote by rca$(n,k,\omega)$, a regular clique assembly on $n$ vertices that is $k$-regular with clique number $\omega$. An important result that we will use in section \ref{sec:localsrgs}, comes from the original paper on regular clique assemblies \cite{guest}, so we will record it here at the end of the introduction.
\begin{thm}\label{thm:rca2}
    If $\Gamma$ is an \textnormal{rca}$(n,k,\omega)$, then $C_\omega(\Gamma)$ is an \textnormal{rca}$\left(\frac{nk}{\omega(\omega -1)}, \omega \left(\frac{k}{\omega -1}-1 \right), \frac{k}{\omega -1}\right)$ and $C_{\frac{k}{\omega -1}}(C_\omega(\Gamma))\cong\Gamma$.
\end{thm}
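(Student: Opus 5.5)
Let $\Gamma$ be an rca$(n,k,\omega)$ and set $m=\frac{k}{\omega-1}$. The plan is to verify directly the defining properties of a regular clique assembly for $C_\omega(\Gamma)$, then construct the isomorphism $C_m(C_\omega(\Gamma))\cong\Gamma$ by sending each vertex $v\in\Gamma$ to the set of $\omega$-cliques containing it.

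\textbf{Counting.} Each vertex $v$ has its $k$ incident edges partitioned by the unique $\omega$-cliques containing them. Each such clique uses $\omega-1$ of those edges, so $v$ lies in exactly $m$ cliques of order $\omega$; in particular $m$ is an integer. Double counting pairs (vertex, clique) gives $nm=\omega\cdot|V(C_\omega(\Gamma))|$, so the clique graph has $\frac{nk}{\omega(\omega-1)}$ vertices.

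\textbf{Intersections and degree.} Two distinct $\omega$-cliques meet in at most one vertex, since a shared edge would lie in two $\omega$-cliques. A clique $c$ meets, through each of its $\omega$ vertices, the $m-1$ other cliques at that vertex. These are all distinct, because a clique meeting $c$ at two vertices would share an edge with it. Hence $C_\omega(\Gamma)$ is regular of degree $\omega(m-1)$.

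\textbf{Cliques of the clique graph.} For each $v$, the set $S_v$ of cliques containing $v$ is a clique of order $m$ in $C_\omega(\Gamma)$. The main step, and the hardest, is to show the following:
\begin{enumerate}
\item every edge of $C_\omega(\Gamma)$ lies in exactly one $S_v$;
\item every maximal clique of $C_\omega(\Gamma)$ is some $S_v$.
\end{enumerate}
The first is immediate, since adjacent cliques share exactly one vertex $v$.

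For the second, take a clique $Q$ of $C_\omega(\Gamma)$ not contained in any single $S_v$. Then there are three cliques $c_1,c_2,c_3\in Q$ pairwise meeting in distinct vertices $x=c_1\cap c_2$, $y=c_2\cap c_3$, $z=c_1\cap c_3$. If these vertices are distinct, they form a triangle in $\Gamma$. The edges $xy\subset c_2$, $yz\subset c_3$, $xz\subset c_1$ lie in three different $\omega$-cliques. But the triangle $xyz$ lies in some maximal clique, which is an $\omega$-clique by the rca hypothesis, and that single clique contains all three edges. This contradicts uniqueness. This is exactly where the hypothesis "every maximal clique is maximum" is needed, and I expect it to be the delicate point.

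It follows that any triangle in $C_\omega(\Gamma)$ lies in some $S_v$. Then any clique $Q$ lies in one $S_v$: fix an edge $c_1c_2$ of $Q$ with common vertex $v$; each further $c\in Q$ forms a triangle with $c_1,c_2$, so $c\ni v$. Thus the maximal cliques are exactly the $S_v$, all of order $m$, so the clique number is $m$. Together with regularity, $C_\omega(\Gamma)$ is an rca with the stated parameters; alternatively, Theorem \ref{thm:rca1} could be invoked by checking edge-regularity with $\lambda=m-2$.

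\textbf{Isomorphism.} The $m$-cliques of $C_\omega(\Gamma)$ are precisely the sets $S_v$. They are distinct for distinct $v$, because two vertices lying in the same $m\ge2$ cliques would force repeated edges. Also $S_u\cap S_v\neq\emptyset$ if and only if $u,v$ lie in a common $\omega$-clique, which holds if and only if $u\sim v$. So $v\mapsto S_v$ is the desired isomorphism $\Gamma\cong C_m(C_\omega(\Gamma))$. Degenerate cases such as $m=1$ (isolated cliques) are handled separately or excluded by the clique number hypothesis.
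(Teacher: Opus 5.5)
Your proof is correct, and it supplies something the paper does not: the paper gives no argument of its own here and simply defers to \cite[Theorem 1]{guest}. Your argument proceeds in three steps:
\begin{enumerate}
\item each vertex lies in $k/(\omega-1)$ of the $\omega$-cliques;
\item $C_\omega(\Gamma)$ is $\omega\left(\frac{k}{\omega-1}-1\right)$-regular;
\item every clique of $C_\omega(\Gamma)$ lies inside a star $S_v$.
\end{enumerate}
This makes the mechanism behind $C_{k/(\omega-1)}(C_\omega(\Gamma))\cong\Gamma$ explicit. It also pins down where ``every maximal clique is maximum'' is used: to put the triangle $xyz$ inside a single $\omega$-clique. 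Clique regularity alone would not suffice. Three copies of $K_4$ glued pairwise at the vertices of a triangle form a $4$-clique regular graph whose clique graph contains a triangle lying in no $S_v$. The same ingredient appears in the paper's Lemma \ref{lem:nonadj}. The computation of $\lambda^*$ in Theorem \ref{thm:srg} excludes the same configuration, but derives it from $\lambda=\omega-2$ instead; by Theorem \ref{thm:rca1} either hypothesis drives your key step.

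Two small repairs are needed. First, replace the hedge ``if these vertices are distinct'' with the one-line argument. Pick $c_1,c_2\in Q$ meeting at $x$. Since $Q\not\subseteq S_x$, some $c_3\in Q$ misses $x$, so $y,z\neq x$. And $y=z$ would put $y$ in $c_1\cap c_2=\{x\}$.

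Second, the case $k=\omega-1$ cannot be ``handled separately.'' There $\Gamma$ is a disjoint union of copies of $K_\omega$, and $C_\omega(\Gamma)$ is edgeless with clique number $1$, so it is not an rca. Moreover $C_1(C_\omega(\Gamma))$ has only $n/\omega$ vertices. So the statement is false in this case and must be read with $k\ge 2(\omega-1)$; the hypothesis $\omega\ge 2$ on $\Gamma$ does not exclude it.

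Also note that your $m=k/(\omega-1)$ clashes with the paper's $m$, which denotes the number of $\omega$-cliques.
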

The proof of this theorem comes from \cite[Theorem 1]{guest}.
Lastly, we introduce another lemma on regular clique assemblies that will be useful in section \ref{sec:localsrgs}.
\begin{lem}\label{lem:nonadj}
    Suppose $\Gamma$ is an \textnormal{rca}$(n,k,\omega)$ and let $u$ and $v$ be distinct and non-adjacent vertices in $\Gamma$. Then $|N(u)\cap N(v)|\leq \frac{k}{\omega -1}$.
\end{lem}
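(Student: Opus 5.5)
Each vertex $u$ of an rca$(n,k,\omega)$ lies in exactly $\frac{k}{\omega-1}$ cliques of order $\omega$. Every edge at $u$ lies in a unique $\omega$-clique. Each such clique contains $u$ and uses exactly $\omega-1$ of the $k$ edges at $u$. Two distinct $\omega$-cliques through $u$ cannot share a second vertex, since that edge would then lie in two $\omega$-cliques. So the neighborhood $N(u)$ is partitioned into $\frac{k}{\omega-1}$ blocks $c_1\setminus\{u\},\dots,c_{k/(\omega-1)}\setminus\{u\}$, where $c_1,\dots,c_{k/(\omega-1)}$ are the $\omega$-cliques containing $u$.

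The plan is to show that $v$ has at most one neighbor in each block $c_i\setminus\{u\}$. Summing over the blocks then gives $|N(u)\cap N(v)|\leq \frac{k}{\omega-1}$.

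Suppose for contradiction that $v$ is adjacent to two distinct vertices $x,y\in c_i\setminus\{u\}$. The edge $xy$ lies in the $\omega$-clique $c_i$, and $c_i$ does not contain $v$, because $v$ is not adjacent to $u\in c_i$. We now use that $\Gamma$ is an erg$(n,k,\omega-2)$, which holds by Theorem \ref{thm:rca1}. The common neighbors of $x$ and $y$ include the $\omega-2$ vertices of $c_i\setminus\{x,y\}$, and these already use up the whole count $\omega-2$. But $v$ is also a common neighbor of $x$ and $y$, and it lies outside $c_i$. So $|N(x)\cap N(y)|\geq \omega-1$, which is a contradiction. The same contradiction can also be obtained directly from clique regularity: $\{x,y,v\}$ is a clique, so it extends to a maximal clique, which must be maximum because $\Gamma$ is a regular clique assembly. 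That maximum clique is an $\omega$-clique containing the edge $xy$, and it differs from $c_i$ since it contains $v$. This contradicts the uniqueness of the $\omega$-clique through $xy$.

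The main point needing care is this counting step. The partition of $N(u)$ is routine. The substance of the proof is that a non-neighbor of $u$ meets each clique through $u$ in at most one vertex, and this is where the rca hypothesis is genuinely used, through the edge-regularity in Theorem \ref{thm:rca1}.
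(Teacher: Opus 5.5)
Your proposal is correct and follows essentially the paper's own argument: partition the neighborhood of one of the two vertices into the $\frac{k}{\omega-1}$ blocks $K_{\omega-1}$ coming from the $\omega$-cliques through it, then show that the other vertex meets each block at most once, since otherwise the edge $xy$ would lie in two distinct $\omega$-cliques. Your second, clique-extension argument is exactly the paper's; the differences are cosmetic: you derive the block partition directly instead of citing Guest's Proposition 1, and you also give the equivalent edge-regularity phrasing via Theorem \ref{thm:rca1}.
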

\begin{proof}
    If $\omega=2$, then the lemma is clear so assume $\omega \geq 3$. From \cite[Proposition 1]{guest}, the neighborhood of each vertex in $\Gamma$ is a disjoint union of $K_{\omega -1}$'s with $\frac{k}{\omega -1}$ components. Suppose $x$ and $y$ are distinct vertices in $N(v)\cap N(u)$. If $x$ and $y$ are in the same $K_{\omega -1}$ component of $N(v)$, then since $\Gamma$ is a regular clique assembly, the clique $\{x,y,u\}$ is contained in an $\omega$-clique containing $u$ and the clique $\{x,y,v\}$ is contained in a different $\omega$-clique containing $v$. But this is a contradiction since the edge $\{x,y\}$ is in two different $\omega$-cliques. So $x$ and $y$ must be in distinct $K_{\omega-1}$ components in $N(v)$ showing there exists an injective mapping from $N(v)\cap N(u)$ to the disjoint $K_{\omega-1}$ components in $N(v)$. This implies the result.
\end{proof}
\chapter{Basic Graph Theory}

In this section, we investigate which graphs have a clique regular line graph, and for which the clique graph acts as the ``inverse" of the line graph construction; which graphs $\Gamma$ satisfy $C_\omega(L(\Gamma))\cong \Gamma$. The following lemmas and theorems will classify for $\omega \geq 3$, all graphs which have the first property and all connected graphs that have the second property.
\begin{lem} \label{lem:d(v)}
    For each vertex $v$ in $\Gamma$, the set of edges incident to $v$ form a clique in $L(\Gamma)$ with order $d(v)$. If $d(v) > 2$, then this clique is maximal.
\end{lem}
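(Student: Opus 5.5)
The first claim follows directly from the definition of $L(\Gamma)$. Let $E_v$ be the set of edges of $\Gamma$ incident to $v$. Any two distinct edges in $E_v$ share the vertex $v$, so they are adjacent in $L(\Gamma)$. Hence $E_v$ is a clique in $L(\Gamma)$. Since $\Gamma$ has no parallel edges, distinct neighbors of $v$ give distinct edges, so $|E_v| = d(v)$.

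For maximality, assume $d(v) > 2$ and suppose, for contradiction, that some edge $e$ of $\Gamma$ with $e \notin E_v$ is adjacent in $L(\Gamma)$ to every edge of $E_v$. Since $e$ is not incident to $v$, we can write $e = \{a,b\}$ with $a,b \neq v$. Each edge $\{v,w\}$ in $E_v$ must share an endpoint with $e$, and that endpoint cannot be $v$. So $w \in \{a,b\}$ for every $w \in N(v)$, which means $N(v) \subseteq \{a,b\}$ and therefore $d(v) \le 2$. This contradicts $d(v) > 2$. So no edge outside $E_v$ extends the clique, and $E_v$ is maximal.

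This is a short argument with no real obstacle. The only point needing care is the counting: an edge not incident to $v$ has just two endpoints, so it can meet at most two of the edges at $v$. This is exactly where the hypothesis $d(v) > 2$ is used. The bound is sharp: if $d(v) = 2$ with $N(v) = \{a,b\}$ and $ab$ an edge, then $\{va, vb, ab\}$ is a triangle in $L(\Gamma)$ strictly containing $E_v$, so $E_v$ is not maximal.
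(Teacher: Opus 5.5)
Your proof is correct and follows essentially the same route as the paper. Both arguments take an edge $e$ outside the clique that is adjacent to every edge at $v$, and use the fact that $e$ has only two endpoints to force $d(v)\le 2$. The paper phrases this step as a pigeonhole argument with no parallel edges; your observation that $N(v)\subseteq\{a,b\}$ is a slightly cleaner way of saying the same thing.
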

\begin{proof}
    Vertex $v$ is incident to $d(v)$ edges in $\Gamma$ all of which share one endpoint $v$. So in $L(\Gamma)$ those edges incident to $v$ form a clique of order $d(v)$. Now assume this clique is not maximal and we will show $d(v) \leq 2$. The clique of the edges of $v$ must then be contained in a larger clique in $L(\Gamma)$, and so there must exist an edge $e$ in $\Gamma$ that is not incident to $v$ but is adjacent to all other edges incident to $v$. If $v$ had degree 3 or larger, by the pigeon hole principle one of the 2 endpoints of $e$ would have to be the endpoint for at least 2 edges incident to $v$. But we do not allow parallel edges, so $v$ can have degree at most 2.
\end{proof}
Following from Lemma \ref{lem:d(v)}, for any vertex $v \in \Gamma$ when we refer to the ``clique created by $v$" we mean the clique in $L(\Gamma)$ of order $d(v)$ induced by the edges in $\Gamma$ incident to $v$.
\begin{lem} \label{lem:whit}
    For $\omega>3$, $L(\Gamma) \cong K_\omega$ if and only if $\Gamma \cong K_{1,\omega}$. Also, $L(\Gamma) \cong K_3$ if and only if $\Gamma \cong K_3$ or $\Gamma \cong K_{1,3}$.
\end{lem}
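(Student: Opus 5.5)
The reverse directions are direct checks. If $\Gamma\cong K_{1,\omega}$, its $\omega$ edges all share the center, so they are pairwise adjacent in $L(\Gamma)$ and $L(\Gamma)\cong K_\omega$. Similarly, the three edges of $K_3$ pairwise share a vertex, so $L(K_3)\cong K_3$; the case $K_{1,3}$ is covered by the star computation.

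For the forward direction, we first have to deal with isolated vertices. The line graph does not see them, so we will assume, as is implicit in the statement, that $\Gamma$ has no isolated vertices (or we identify $\Gamma$ with its edge-induced subgraph). Now suppose $L(\Gamma)\cong K_\omega$ with $\omega\geq 3$. Then $\Gamma$ has exactly $\omega$ edges, and these edges pairwise share an endpoint. The argument then splits on the maximum degree $\Delta(\Gamma)$.

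\emph{Case $\Delta(\Gamma)\geq 3$.} Take a vertex $v$ with $d(v)\geq 3$. By Lemma \ref{lem:d(v)}, the edges at $v$ form a maximal clique in $L(\Gamma)$. The whole of $L(\Gamma)$ is a clique, so this maximal clique must be all of $L(\Gamma)$. Hence every edge of $\Gamma$ is incident to $v$. Since $\Gamma$ has no isolated vertices and no parallel edges, $\Gamma\cong K_{1,\omega}$.

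\emph{Case $\Delta(\Gamma)\leq 2$.} Now $\Gamma$ is a disjoint union of paths and cycles. Any two edges must share a vertex, so there is only one nontrivial component; otherwise edges in different components would be non-adjacent in $L(\Gamma)$. This component has $\omega\geq 3$ edges with pairwise intersections. A path with at least $3$ edges has non-adjacent first and last edges, and a cycle $C_m$ with $m\geq 4$ has two opposite, disjoint edges. The only remaining possibility is $C_3=K_3$, which has exactly $3$ edges. So this case forces $\omega=3$ and $\Gamma\cong K_3$, and it cannot occur when $\omega>3$.

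Putting the cases together: for $\omega>3$ we get $\Gamma\cong K_{1,\omega}$, and for $\omega=3$ we get $\Gamma\cong K_{1,3}$ or $\Gamma\cong K_3$. The main obstacle is the small-degree case, where we must cleanly rule out paths and longer cycles. A pigeonhole argument would also work here: among three pairwise-intersecting edges, either all three share a common vertex or they form a triangle.
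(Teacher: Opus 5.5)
Your proof is correct, but it takes a different route from the paper. The paper handles the forward direction in one line by citing the Whitney Isomorphism Theorem: since $L(\Gamma)\cong K_\omega\cong L(K_{1,\omega})$ (and also $\cong L(K_3)$ when $\omega=3$), Whitney forces $\Gamma\cong K_{1,\omega}$, up to the single $K_3$/$K_{1,3}$ exception.

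You instead give a self-contained case analysis. If some vertex has degree at least $3$, Lemma \ref{lem:d(v)} makes its edge set a maximal clique of the complete graph $L(\Gamma)$, so that clique is all of $L(\Gamma)$ and $\Gamma$ is a star. If $\Delta(\Gamma)\le 2$, the pairwise-intersecting edges lie in a single path or cycle, and only $C_3$ survives, which also forces $\omega=3$.

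What each approach buys:
\begin{itemize}
\item The paper's route is shorter, but it relies on a substantial external theorem and silently uses its hypotheses. Whitney's theorem is stated for connected graphs, and the lemma as written is false if $\Gamma$ may have isolated vertices; for example, $L(K_{1,\omega}\cup K_1)\cong K_\omega$.
\item Your route costs a short case split but uses only the preceding lemma. It shows exactly where the exceptional $K_3$ comes from, and it states the needed no-isolated-vertices convention explicitly, which is a real gain in rigor.
\item Your closing pigeonhole remark is also valid and would give an equally elementary alternative: three pairwise-intersecting edges either share a common vertex or form a triangle, and no fourth edge can meet all three sides of a triangle.
\end{itemize}
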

\begin{proof}
    The graph $K_{1,\omega}$ has $\omega$ edges that all share a common end point, so $L(K_{1,\omega}) \cong K_\omega$ for $\omega \geq 3$. Also, it is clear that $L(K_3) \cong K_3$. The converse for both cases follows from the Whitney Isomorphism Theorem \cite{whitney}, which states that two graphs are isomorphic if and only if their line graphs are isomorphic, with the single exception of graphs $K_3$ and $K_{1,3}$.
\end{proof}
\begin{remark} \label{remark}
    Lemma \ref{lem:whit} implies that the only structures that can create a maximal clique in $L(\Gamma)$ are the set of edges incident to a vertex $v$, i.e. cliques created by a vertex $v$ following from Lemma \ref{lem:d(v)}. There is one exception in the case of a 3-clique in $L(\Gamma)$, which can also be created by a triangle, $K_3$.
\end{remark}
We will begin by classifying graphs that have $\omega$-clique regular line graphs and start with the case $\omega \geq 4$.
\begin{thm} \label{thm:4cr}
    Suppose $\omega \geq 4$ and $\Gamma$ is a graph. Then $L(\Gamma)$ is $\omega$-clique regular if and only if the degree of every vertex in $\Gamma$ is 1 or $\omega$.
\end{thm}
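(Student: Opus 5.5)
The plan is to translate cliques in $L(\Gamma)$ back into structures in $\Gamma$ using Lemma \ref{lem:d(v)} and Remark \ref{remark}. The key fact is this. Since $\omega\geq 4$, any clique of order $\omega$ in $L(\Gamma)$ is a set of $\omega$ pairwise-intersecting edges of $\Gamma$. By Lemma \ref{lem:whit}, it must be isomorphic to $L(K_{1,\omega})$, so it consists of all $\omega$ edges at some common vertex. I would argue this directly: the edges span a subgraph $H$ with $L(H)\cong K_\omega$, and the Whitney exception only arises for $K_3$. Concretely, $\omega$ pairwise-intersecting edges with $\omega\geq 4$ must share a common vertex, since three pairwise-intersecting edges without a common vertex form a triangle, and a fourth edge cannot meet all three triangle edges. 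Hence the $\omega$-cliques of $L(\Gamma)$ are exactly the $\omega$-subsets of the edge sets $E(v)$ of vertices $v$ with $d(v)\geq\omega$.

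($\Leftarrow$) Suppose every degree is $1$ or $\omega$. Take any edge of $L(\Gamma)$, that is, two edges $e,f$ of $\Gamma$ sharing a vertex $v$. Then $d(v)\geq 2$, so $d(v)=\omega$, and $E(v)$ is an $\omega$-clique containing both $e$ and $f$. For uniqueness, any $\omega$-clique containing $e$ and $f$ is $E(w)$ for some $w$ of degree $\omega$ with $w$ common to $e$ and $f$. Two distinct edges share at most one vertex, since there are no parallel edges, so $w=v$. Also, since every vertex degree is at most $\omega$, $E(v)$ has exactly $\omega$ elements and there are no proper $\omega$-subsets to worry about. We also need $L(\Gamma)$ to have a nonempty edge set. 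This holds if some vertex has degree $\omega$, and I would note the degenerate case where all degrees are $1$ (a matching, so $L(\Gamma)$ is edgeless) as needing the convention that the graph has at least one such vertex.

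($\Rightarrow$) Suppose $L(\Gamma)$ is $\omega$-clique regular, and let $v$ be a vertex with $d(v)\neq 1$. There are three cases to exclude.
\begin{enumerate}
\item If $d(v)\geq 2$ and $d(v)<\omega$, pick two edges at $v$. They form an edge of $L(\Gamma)$, so they lie in some $\omega$-clique, which must be $E(w)$ for a vertex $w$ common to both edges. That forces $w=v$, but $d(v)<\omega$, a contradiction.
\item If $d(v)>\omega$, pick two edges at $v$. They lie in at least $\binom{d(v)-2}{\omega-2}\geq 2$ distinct $\omega$-subsets of $E(v)$, contradicting uniqueness. Here I use $\omega\geq 3$ and $d(v)-2>\omega-2$.
\item Vertices of degree $0$ are excluded by convention, or handled by noting isolated vertices; I would treat them like degree-$1$ vertices or assume no isolated vertices, matching the paper's conventions.
\end{enumerate}

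The main obstacle is the structural claim that every $\omega$-clique in $L(\Gamma)$ comes from a single vertex when $\omega\geq 4$. This is where the triangle exception from Remark \ref{remark} must be ruled out. The rest is routine counting.
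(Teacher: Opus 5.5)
Your proposal is correct and follows the paper's route: identify the $\omega$-cliques of $L(\Gamma)$ with $\omega$-subsets of the edge stars $E(v)$, then run the same degree case analysis. The differences are minor: you prove the star claim directly, where the paper cites Whitney's theorem through Remark~\ref{remark}, and for $d(v)>\omega$ you count that an edge of $L(\Gamma)$ at $v$ lies in $\binom{d(v)-2}{\omega-2}\geq 2$ distinct $\omega$-cliques, where the paper appeals to a larger clique. Your caveats are real edge cases that the paper's statement ignores: an isolated vertex (e.g.\ $K_{1,\omega}$ plus an isolated vertex) breaks the forward direction, and a graph with all degrees $1$, whose line graph is edgeless, breaks the reverse direction under the paper's nonempty-edge-set definition.
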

\begin{proof}
    First assume that the degree of every vertex in $\Gamma$ is 1 or $\omega$. Let $e=\{xv, yv\}$ be an edge in $L(\Gamma)$. Then the vertex $v$ in $\Gamma$ has degree $\omega$ and so by Lemma \ref{lem:d(v)} $e$ is in the maximal $\omega$-clique created by $v$, and by Remark \ref{remark}, this is the unique $\omega$-clique $e$ is in.\\
    For the converse, assume there exists a vertex $v$ in $\Gamma$ with degree not equal to either 1 or $\omega$. If $1 < d(v) < \omega$, then there exists edges $e_1$ and $e_2$ incident to $v$ in $\Gamma$. So the edge $e_1e_2$ in $L(\Gamma)$ is in the maximal clique created by $v$ of order less than $\omega$ and $e_1$ and $e_2$ share no other endpoints in $\Gamma$ so by Remark \ref{remark}, the edge $e_1e_2$ is in no clique of order $\omega$. If $d(v) > \omega $, then there exists a clique of order greater than $\omega$ in $L(\Gamma)$. So in either case $L(\Gamma)$ is not $\omega$-clique regular.
\end{proof}
The case of the $3$-clique regular line graph is more complicated because\linebreak $L(K_3) \cong K_3 \cong L(K_{1,3})$. For this reason, the concept of a triangle-free graph will be useful in our classification. This is defined as a graph that has no triangles as a subgraph, or equivalently as a graph $\Gamma$ with $\omega(\Gamma)=2$.
\begin{thm} \label{thm:3cr}
    Suppose $\Gamma$ is a graph with connected components $C_1, C_2, \ldots , C_n$. Then $L(\Gamma)$ is $3$-clique regular if and only if whenever $C_i \not \cong K_3$ , $C_i$ is triangle-free and the degree of every vertex in $C_i$ is 1 or $3$. 
\end{thm}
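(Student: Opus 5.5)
The plan is to analyze where triangles in $L(\Gamma)$ come from and work component by component. Since $L(\Gamma)$ is the disjoint union of the $L(C_i)$, and an edge of $L(\Gamma)$ together with any triangle containing it lies inside a single $L(C_i)$, $L(\Gamma)$ is $3$-clique regular exactly when every edge of every $L(C_i)$ lies in a unique triangle. Here I read the nonempty-edge-set requirement as a global hypothesis on $L(\Gamma)$ and do not impose it componentwise. I will use the following structural fact, which is Remark \ref{remark} specialized to order $3$ and follows from Lemma \ref{lem:whit} applied to the three edges of $\Gamma$ forming a $K_3$ in $L(\Gamma)$: a triangle in $L(\Gamma)$ is either three edges of $\Gamma$ incident to a common vertex, or the three edges of a triangle in $\Gamma$.

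For the ``if'' direction, first take a component $C_i\cong K_3$. Then $L(C_i)\cong K_3$, and each of its edges is in exactly one triangle. Next take $C_i$ triangle-free with all degrees $1$ or $3$. An edge $e_1e_2$ of $L(C_i)$ corresponds to two edges $e_1,e_2$ sharing a vertex $v$. Since $d(v)\ge 2$, we have $d(v)=3$, and the three edges at $v$ give a triangle containing $e_1e_2$. Any other triangle containing $e_1e_2$ would have to come from a triangle of $\Gamma$, which $C_i$ lacks, or from a different common endpoint of $e_1$ and $e_2$, which is impossible without parallel edges. So that triangle is unique.

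For the converse, suppose some component $C_i\not\cong K_3$ violates the condition. I split into two cases.

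\emph{Case 1: $C_i$ contains a triangle $xyz$.} Because $C_i$ is connected and not $K_3$, one of the triangle's vertices, say $x$, has a fourth neighbor $w$. Then the edge $\{xy,xz\}$ of $L(\Gamma)$ lies in the triangle $\{xy,yz,xz\}$ and also in the triangle $\{xy,xz,xw\}$, so it is in two distinct triangles.

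\emph{Case 2: $C_i$ is triangle-free but has a vertex $v$ with $d(v)\notin\{1,3\}$.} If $d(v)=2$, let $e_1,e_2$ be the two edges at $v$. The only possible triangles through $e_1e_2$ come from $v$ having degree at least $3$ or from a triangle of $\Gamma$, and neither exists, so $e_1e_2$ is in no triangle. If $d(v)\ge 4$, any four edges at $v$ form a $K_4$ in $L(\Gamma)$, so each edge among them lies in two triangles. (An isolated vertex of $\Gamma$ contributes nothing to $L(\Gamma)$; I treat that degenerate case by convention, or observe that the statement implicitly concerns vertices of positive degree.)

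The main obstacle is making rigorous the claim that triangles in $L(\Gamma)$ arise only from stars or triangles of $\Gamma$. I would get this from Lemma \ref{lem:whit} applied to the subgraph of $\Gamma$ spanned by the three edges in question, since its line graph is $K_3$. The rest is routine case checking.
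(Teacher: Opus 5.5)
Your proof is correct and takes essentially the paper's route. You use the same star-or-triangle description of triangles in $L(\Gamma)$ (the paper's Remark~\ref{remark}, which you justify more carefully by applying Lemma~\ref{lem:whit} to the subgraph spanned by the three edges), and you run the same cases: $K_3$ components, a triangle with a pendant edge putting an edge of $L(\Gamma)$ in two triangles, a degree-$2$ vertex giving an edge in no triangle, and degree at least $4$ giving a $K_4$.

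Your caveat about isolated vertices improves on the paper, since $K_3\cup K_1$ breaks the literal ``only if''. The ``if'' direction needs a similar caveat: when every component is $K_2$ the condition holds, but $L(\Gamma)$ is edgeless and hence not $3$-clique regular. So your ``global hypothesis'' that $L(\Gamma)$ has an edge has to be assumed there rather than derived, and the paper's proof skips this as well.
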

\begin{proof}
    First, assume that for each $C_i \not \cong K_3$, $C_i$ is triangle-free and the degree of every vertex in $C_i$ is 1 or $3$. Let $e=\{xv,yv\}$ be an edge in $L(\Gamma)$ with vertex $v$ in connected component $C_i$. If $C_i \cong K_3$, then edge $e$ is in $L(C_i) \cong K_3$ a connected component of $L(\Gamma)$ so we are done. So assume $C_i \not \cong K_3$ implying by our assumption that the degree of $v$ is $3$. So the $3$-clique created by $v$ is maximal by Lemma \ref{lem:d(v)} and since $C_i$ is triangle-free by our assumption, Remark \ref{remark} implies that this is the only $3$-clique containing edge $e$.\\
    Now assume the converse, that for some $C_i \not \cong K_3$, $C_i$ contains a triangle or $C_i$ contains a vertex with degree not equal to either $1$ or $3$. For the first case, since $C_i$ is connected, contains a triangle and is not $K_3$, Figure \ref{fig:sub1} is a subgraph of $\Gamma$ and therefore Figure \ref{fig:sub2} is a subgraph of $L(\Gamma)$. So $L(\Gamma)$ is not $3$-clique regular.\par
    \begin{figure}[htbp]
    \begin{subfigure}{.5\textwidth}
      \centering
      \includegraphics[width=.8\linewidth]{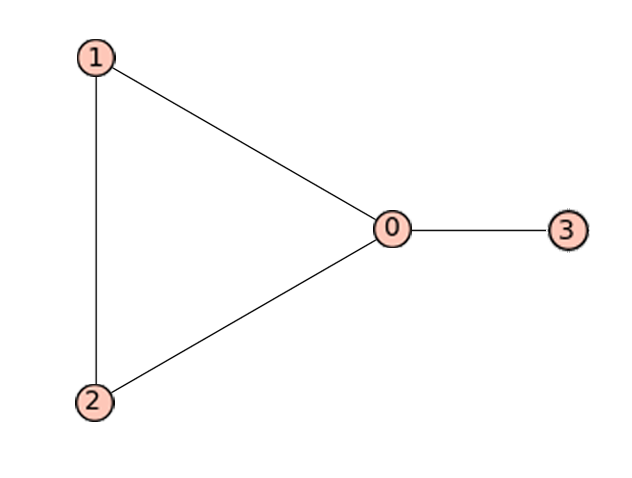}
      \caption{Subgraph of $\Gamma$}
      \label{fig:sub1}
    \end{subfigure}
    \begin{subfigure}{.5\textwidth}
      \centering
      \includegraphics[width=.8\linewidth]{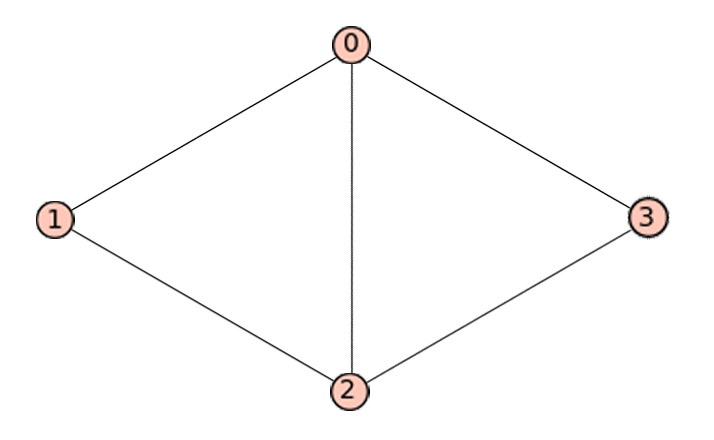}
      \caption{Subgraph of $L(\Gamma)$}
      \label{fig:sub2}
    \end{subfigure}
    \caption[Graphs used in the proof of Theorem \ref{thm:4cr}]{}
    \end{figure}
Now for the second case, let $v$ be a vertex in $C_i$ with degree not equal to either $1$ or $3$. If $d(v) > 3$, then there exists a clique of order greater than $3$ in $L(\Gamma)$ so $L(\Gamma)$ is not $3$-clique regular. If $d(v)=2$, then let vertices $x$ and $y$ be adjacent to $v$. If $x$ and $y$ are adjacent, then $\{v,x,y\}$ is a triangle in $\Gamma$, a contradiction. The edge $e = \{xv,yv\}$ in $L(\Gamma)$ is then not in a $3$-clique induced by the edges incident to $v$ and not in a $3$-clique induced by a triangle in $\Gamma$. So by Remark \ref{remark}, edge $e$ is not in any $3$-clique thus $L(\Gamma)$ is not $3$-clique regular.
\end{proof}
Now we restrict our focus to connected graphs and will classify all that are isomorphic to the $\omega$-clique graph of their line graph. Again, we start with the case $\omega \geq 4$ and we will find the following lemma useful.
\clearpage
\begin{lem} \label{lem:delta}
    If $\omega\geq 3$ and $\Gamma$ is a connected graph with $C_\omega(L(\Gamma)) \cong \Gamma$, then $\Delta(\Gamma) \leq \omega$.
\end{lem}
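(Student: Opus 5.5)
The plan is to argue by contradiction, using invariants that any isomorphism $C_\omega(L(\Gamma))\cong\Gamma$ must preserve: the number of vertices, the number of edges and the maximum degree. Suppose $\Delta(\Gamma)=D>\omega$ and fix a vertex $v$ with $d(v)=D$.

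\textbf{Step 1: describe the $\omega$-cliques of $L(\Gamma)$.} By Lemma \ref{lem:d(v)} and Remark \ref{remark}, every clique of $L(\Gamma)$ lies inside either the clique created by some vertex or a triangle of $\Gamma$. Since $\omega\ge 3$, every $\omega$-clique of $L(\Gamma)$ is therefore one of two kinds:
\begin{itemize}
\item an $\omega$-subset of the edges incident to a unique center vertex (the center is unique because two distinct edges share at most one endpoint);
\item only when $\omega=3$, the edge set of a triangle of $\Gamma$.
\end{itemize}
This yields the vertex count
\[
|V(\Gamma)| \;=\; |V(C_\omega(L(\Gamma)))| \;=\; \sum_{u\in\Gamma}\binom{d(u)}{\omega}+[\omega=3]\,t(\Gamma),
\]
where $t(\Gamma)$ is the number of triangles of $\Gamma$.

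\textbf{Step 2: compare degrees.} Let $Q$ be an $\omega$-subset of the edges at $v$. Among the $\binom{D}{\omega}$ such subsets, $Q$ meets all except those lying in the complementary $D-\omega$ edges. So $Q$ already has at least $\binom{D}{\omega}-\binom{D-\omega}{\omega}-1$ neighbours in $C_\omega(L(\Gamma))$, and it gains further neighbours from $\omega$-cliques centred at the other endpoints of its edges. The goal is to show this exceeds $D$, contradicting $\Delta(C_\omega(L(\Gamma)))=\Delta(\Gamma)=D$. For $D\ge\omega+2$ this should follow from elementary binomial estimates. For example, the subsets sharing exactly $\omega-1$ edges with $Q$ already number $\omega(D-\omega)$, and the subsets sharing $\omega-2$ edges push the count past $D$ once $\omega\ge3$; this needs a routine check.

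\textbf{Step 3: the boundary case $D=\omega+1$ (main obstacle).} Here the star at $v$ contributes only $\omega=D-1$ neighbours to $Q$, so the degree argument alone fails. The star $K_{1,\omega+1}$ shows the failure is genuine, since $C_\omega(L(K_{1,\omega+1}))\cong K_{\omega+1}$, and it also suggests the repair: the collapse loses vertices. I would first try to refine Step 2. If some edge of $Q$ has its other endpoint $u$ of degree at least $\omega$, then $Q$ picks up an extra neighbour centred at $u$, its degree exceeds $D$, and we are done. We may therefore assume that every vertex adjacent to a vertex of degree $D$ has degree less than $\omega$. I would then invoke connectivity and the vertex count from Step 1, possibly together with the analogous edge count, to show that $\sum_u\binom{d(u)}{\omega}$ plus the triangle term falls strictly short of $|V(\Gamma)|$. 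The intuition is that each such high-degree vertex contributes $\omega+1$ cliques but forces at least $\omega+1$ low-degree neighbours that contribute nothing. Making this accounting rigorous, including the triangle correction when $\omega=3$, is the step I expect to require the most care.
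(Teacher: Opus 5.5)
Your Steps 1 and 2 are fine. For $D\ge\omega+2$ the $\omega(D-\omega)$ subsets meeting $Q$ in exactly $\omega-1$ edges already exceed $D$ once $\omega\ge 3$. Step 3, however, has a genuine gap.

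\textbf{An off-by-one in your reduction.} The star at $v$ gives $Q$ exactly $\omega=D-1$ neighbours. A single extra neighbour centred at an endpoint $u$ with $d(u)=\omega$ therefore only brings $d(Q)$ up to $D$, which is no contradiction. All you may legitimately assume is that each vertex of degree $\omega+1$ has at most one neighbour of degree $\ge\omega$, and that this neighbour has degree exactly $\omega$. You may not assume that all its neighbours have degree $<\omega$.

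\textbf{The vertex count cannot close the argument.} This fails even under your stronger assumption, because low-degree neighbours can be shared by several high-degree vertices. Take $\omega=4$ and two vertices $a,b$ of degree $5$ with common neighbours $x_1,x_2,x_3$. Give $a$ further neighbours $x_4,x_5$, give $b$ further neighbours $x_6,x_7$, and attach a leaf $x_8$ to $x_4$. This graph is connected, has $\Delta=5$, and every neighbour of $a$ or $b$ has degree at most $2$. Yet $|V(\Gamma)|=10=2\binom{5}{4}=|V(C_4(L(\Gamma)))|$. So the strict inequality you aim for is false in general, and the edge count is only mentioned, not shown to rescue it.

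\textbf{The missing idea.} Use the isomorphism structurally rather than numerically, as the paper does.
\begin{itemize}
\item The $\omega+1$ $\omega$-subsets of the star at $v$ pairwise intersect, so they span a $K_{\omega+1}$ in $C_\omega(L(\Gamma))\cong\Gamma$. Hence $\Gamma$ itself contains a $K_{\omega+1}$.
\item Since $\Gamma$ is connected with $\Delta(\Gamma)=\omega+1$, this clique is not all of $\Gamma$. So some vertex $u$ of the clique has degree $\omega+1$.
\item The $\omega$ edges from $u$ into the clique form an $\omega$-clique $Q'$ of $L(\Gamma)$. It has $\omega$ neighbours centred at $u$.
\item It also has at least one further neighbour centred at each of the other $\omega$ clique vertices, each of which has degree $\ge\omega$. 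These are distinct, since centres are unique when $\omega\ge3$.
\item Hence $d(Q')\ge 2\omega>\omega+1$, the desired contradiction.
\end{itemize}
Note that $u$ has $\omega$ neighbours of degree $\ge\omega$. This is exactly the configuration your corrected reduction forbids; what your plan lacked was a way to force it to exist, and pulling back the $K_{\omega+1}$ through the isomorphism does that.
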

\begin{proof}
    First, assume for contradiction $\Delta(\Gamma) \geq \omega +2$ and let $v$ be a vertex in $\Gamma$ with degree $\Delta=\Delta(\Gamma)$. Observe that the number of $\omega$-cliques in the $\Delta$-clique created by $v$ in $L(\Gamma)$ is $\binom{\Delta}{\omega}$. Let $c_k$ be one of these $\omega$-cliques and we will show that the degree of $c_k$ in $C_\omega(L(\Gamma))$ is at least $\binom{\Delta}{\omega}-\binom{\Delta-\omega}{\omega}-1$. In the $\Delta$-clique created by $v$, $c_k$ is only non-adjacent with another clique if they don't share any vertices. There are $\Delta-\omega$ vertices in the $\Delta$-clique created by $v$ that are not in $c_k$ so there are $\binom{\Delta-\omega}{\omega}$ other $\omega$-cliques not adjacent to $c_k$. This implies that $c_k$ is adjacent to $\binom{\Delta}{\omega}-\binom{\Delta-\omega}{\omega}-1$ other $\omega$-cliques in the $\Delta$-clique created by $v$. So $d(c_k) \geq \binom{\Delta}{\omega}-\binom{\Delta-\omega}{\omega}-1$. By induction on $\Delta \geq \omega+2$, we will show that $\binom{\Delta}{\omega}-\binom{\Delta-\omega}{\omega}-1 > \Delta$ which implies $d(c_k) > \Delta(\Gamma)$ contradicting $C_\omega(L(\Gamma)) \cong \Gamma$. For the base case let $\Delta=\omega+2$. Since $\omega \geq 3$ then $\binom{\Delta-\omega}{\omega}=\binom{2}{\omega}=0$. So
    \begin{align*}
        \frac{\omega+1}{2} &\geq 2\\
        \frac{(\omega+2)(\omega+1)}{2} -1 & \geq 2\omega+3 >\omega+2\\
        \binom{\omega+2}{\omega}-\binom{2}{\omega}-1 &>\omega+2.
    \end{align*}
    For the inductive step, assume $\binom{\Delta}{\omega}-\binom{\Delta-\omega}{\omega}-1>\Delta$ for some $\Delta\geq \omega+2$ and we will show $\binom{\Delta+1}{\omega}-\binom{\Delta+1-\omega}{\omega}-1>\Delta+1$. Recall that for any positive integers $a$ and $b$, $\binom{a+1}{b}=\binom{a}{b}+\binom{a}{b-1}$. Then since $\binom{\Delta}{\omega-1} > \binom{\Delta-\omega}{\omega-1}$ implies $\binom{\Delta}{\omega-1} - \binom{\Delta-\omega}{\omega-1} >0$ we get,
    \[\binom{\Delta+1}{\omega}-\binom{\Delta+1-\omega}{\omega}-1 > \binom{\Delta}{\omega}-\binom{\Delta-\omega}{\omega}-1 \geq \Delta+1.\]\par
    Now for contradiction, assume $\Delta(\Gamma) = \omega +1$ and let $v$ be a vertex in $\Gamma$ with degree $\omega +1$. Then $v$ creates an $(\omega +1)$-clique in $L(\Gamma)$ and observe that every $\omega$-clique in the $(\omega+1)$-clique created by $v$ is adjacent to every other $\omega$-clique. So there is an $(\omega+1)$-clique in $C_\omega(L(\Gamma))$ since the number of $\omega$-cliques in an $(\omega+1)$-clique is $\omega+1$. Because of the assumed isomorphism, there must be an $(\omega+1)$-clique in $\Gamma$. Then since $\Delta(\Gamma)=\omega+1$ and $\Gamma$ is connected, at least one of the vertices in this $(\omega+1)$-clique has degree $\omega+1$, call this vertex $u$. Consider the set of edges in the $(\omega+1)$-clique incident with $u$. This set induces an $\omega$-clique in $L(\Gamma)$ we will call $c_k$ which we will show has degree in $C_\omega(L(\Gamma))$ greater than $\omega +1$. Since the other endpoint of every edge in $c_k$ has degree at least $\omega$, they all create at least one $\omega$-clique in $L(\Gamma)$ which is adjacent to $c_k$. Since $u$ has degree $\omega+1$ in $\Gamma$, it creates $\omega$ other $\omega$-cliques in $L(\Gamma)$ also adjacent to $c_k$. So $d(c_k) \geq 2\omega > \omega +1 = \Delta(\Gamma)$ since $\omega \geq 3$, contradicting the isomorphism.
\end{proof}
\begin{thm} \label{thm:4ci}
    Suppose $\omega \geq 4$ and $\Gamma$ is a connected graph. Then $C_\omega(L(\Gamma)) \cong \Gamma$ if and only if $\Gamma$ is $\omega$-regular.
\end{thm}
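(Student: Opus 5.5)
For the forward direction ($\Gamma$ $\omega$-regular implies $C_\omega(L(\Gamma))\cong\Gamma$), we build an explicit isomorphism $\phi:\Gamma\to C_\omega(L(\Gamma))$.

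\textbf{Defining $\phi$.} Send each vertex $v$ to the clique created by $v$. Since $d(v)=\omega\geq 4>2$, Lemma \ref{lem:d(v)} says this is a maximal $\omega$-clique of $L(\Gamma)$.

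\textbf{$\phi$ is a bijection.} By Remark \ref{remark} (which rests on Lemma \ref{lem:whit}, with $\omega\geq4$ ruling out the triangle exception), every $\omega$-clique of $L(\Gamma)$ is the edge set at some vertex. Such a vertex has degree at least $\omega$, hence exactly $\omega$, and the clique is its entire star; this gives surjectivity. For injectivity, if the stars of $u\neq v$ coincide, then every edge at $u$ is also incident to $v$. Since there are no parallel edges, the star of $u$ then has size at most $1$, contradicting $\omega\geq 4$.

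\textbf{$\phi$ preserves adjacency.} The stars of distinct $u,v$ share a vertex of $L(\Gamma)$, that is, an edge of $\Gamma$, exactly when $uv\in E$. Hence $u\sim v$ if and only if $\phi(u)\sim\phi(v)$.

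For the converse, assume $\Gamma$ is connected and $C_\omega(L(\Gamma))\cong\Gamma$. Lemma \ref{lem:delta} gives $\Delta(\Gamma)\leq\omega$. By Remark \ref{remark}, the $\omega$-cliques of $L(\Gamma)$ correspond injectively to the vertices of $\Gamma$ of degree exactly $\omega$; call this set $W$. Again $\omega\geq4$ excludes the triangle case, and injectivity holds as above. Thus $|V(C_\omega(L(\Gamma)))|=|W|$. The isomorphism forces $|W|=|V(\Gamma)|$, so every vertex lies in $W$, and $\Gamma$ is $\omega$-regular. This vertex count is the whole argument, so no degree comparison is needed.

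\textbf{Main obstacle.} The step needing care is justifying that every $\omega$-clique of $L(\Gamma)$ comes from a single vertex star. A $\omega$-clique need not be maximal a priori, so the claim must apply to arbitrary $\omega$-cliques. Consider a set of $\omega\geq4$ pairwise-intersecting edges. If they do not share a common vertex, then three of them form a triangle, and any further edge meeting all three of those must be one of the triangle's edges. So pairwise-intersecting families of size at least $4$ are stars. I would state this small argument explicitly rather than lean only on Remark \ref{remark}, since the remark is phrased in terms of maximal cliques.
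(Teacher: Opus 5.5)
Your proposal is correct and follows essentially the same route as the paper: the vertex-to-star map gives the isomorphism when $\Gamma$ is $\omega$-regular, and for the converse Lemma \ref{lem:delta} plus a count of the $\omega$-cliques of $L(\Gamma)$ against the vertices of degree $\omega$ forces regularity. Your direct check that any $\omega\geq 4$ pairwise-intersecting edges share a common vertex usefully tightens the paper's appeal to Remark \ref{remark}, because it applies to non-maximal $\omega$-cliques as well.
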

\begin{proof}
    First assume $\Gamma$ is $\omega$-regular. From Lemma \ref{lem:d(v)} there is an injective mapping from the vertices in $\Gamma$ to the $\omega$-cliques their edges induce in $L(\Gamma)$. From Remark \ref{remark} we know that the cliques created by each vertex in $\Gamma$ are the only $\omega$-cliques in $L(\Gamma)$. So the injective mapping is a bijection from the vertices in $\Gamma$ to the vertices in $C_\omega(L(\Gamma))$. We will show this bijection preserves adjacency. If vertices $v$ and $u$ are adjacent in $\Gamma$, then the cliques created by $v$ and by $u$ share the vertex $vu$ in $L(\Gamma)$. So in $C_\omega(L(\Gamma))$ these two cliques are adjacent. If $v$ and $u$ are not adjacent, then their cliques in $L(\Gamma)$ don't share a vertex so they are not adjacent in $C_\omega(L(\Gamma))$. Thus it follows $C_\omega(L(\Gamma)) \cong \Gamma$.\par
    Now assume that $\Gamma \cong C_\omega(L(\Gamma))$ which implies by Lemma \ref{lem:delta} that $\Delta(\Gamma)\leq \omega$. Let $d_i$ denote the number of vertices in $\Gamma$ with degree $i$ for $1 \leq i \leq \omega$. By Remark \ref{remark} and since $\Delta(\Gamma) \leq \omega$, we know the number of vertices in $C_\omega(L(\Gamma))$ equals $d_\omega$. We also know the number of vertices in $\Gamma$ is $\sum_{i=1}^\omega d_i$. Because these graphs are isomorphic, we have $d_\omega = \sum_{i=1}^\omega d_i$ implying $d_i = 0$ for all $i\neq \omega$. Thus $\Gamma$ is $\omega$-regular.
\end{proof}
Once again, the case of the $3$-clique graph of a line graph is much more complicated even when restricting to connected graphs.
\clearpage
\begin{thm} \label{thm:3ci}
    Suppose $\Gamma$ is a connected graph. Then $C_3(L(\Gamma)) \cong \Gamma$ if and only if:\\
           \hspace*{0.5cm} (1) the degree of every vertex in $\Gamma$ is $2$ or $3$,  \\
           \hspace*{0.5cm} (2) every degree $2$ vertex in $\Gamma$ is contained in a triangle, \\
           \hspace*{0.5cm} (3) every triangle in $\Gamma$ contains exactly one degree $2$ vertex, and \\
           \hspace*{0.5cm} (4) distinct triangles in $\Gamma$ share no vertices.\vspace{0.2cm} 
\end{thm}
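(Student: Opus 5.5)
The plan is to describe the 3-cliques of $L(\Gamma)$ explicitly and then compare counts and adjacencies, much as in Theorem \ref{thm:4ci}. By Remark \ref{remark}, every 3-clique of $L(\Gamma)$ has one of two sources. The first is a triple of edges at a common vertex $v$; there are $\binom{d(v)}{3}$ of these at $v$. The second is the edge set of a triangle of $\Gamma$. Call these \emph{star cliques} and \emph{triangle cliques}. Two 3-cliques are adjacent in $C_3(L(\Gamma))$ exactly when they share an edge of $\Gamma$.

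For sufficiency, assume (1)--(4). Each degree-3 vertex gives exactly one star clique and degree-2 vertices give none. Each triangle gives one triangle clique. By (3), a triangle has exactly one degree-2 vertex, so I will define a map $\phi$ that sends each degree-3 vertex to its star clique and each degree-2 vertex to the triangle clique of the triangle containing it. This triangle exists by (2) and is unique by (4). The map $\phi$ is a bijection, because every triangle is hit exactly once by (3), and by (1) there are no other cliques.

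Next I check that $\phi$ preserves adjacency, working case by case.
\begin{itemize}
\item Two degree-3 vertices: their stars share an edge if and only if the vertices are adjacent.
\item A degree-3 vertex $u$ and a degree-2 vertex $w$ in triangle $T$: the star of $u$ meets $T$ if and only if $u\in T$. I then need $u\in T$ if and only if $u\sim w$. If $u\in T$, then $u\sim w$ because $T$ is a triangle. Conversely, if $u\sim w$ with $d(w)=2$, both neighbours of $w$ lie in $T$, so $u\in T$.
\item Two degree-2 vertices: they lie in distinct triangles by (3), and these are disjoint by (4). Their triangle cliques share no edge, and the vertices themselves are nonadjacent, since otherwise both would lie in the same triangle.
\end{itemize}
Hence $C_3(L(\Gamma))\cong\Gamma$.

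For necessity, suppose $C_3(L(\Gamma))\cong\Gamma$ with $\Gamma$ connected. Lemma \ref{lem:delta} gives $\Delta(\Gamma)\le 3$. Let $d_i$ be the number of vertices of degree $i$ and $t$ the number of triangles. Counting vertices gives $d_1+d_2+d_3=d_3+t$, so $t=d_1+d_2$.

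I will bound $t$ from above. In a graph with $\Delta\le 3$, the only configurations where triangles share vertices are $K_4$ and the diamond $K_4-e$. Both of these can be excluded. For $K_4$, connectivity forces $\Gamma=K_4$, and $C_3(L(K_4))$ has $4+4$ vertices, not $4$. For the diamond, the two degree-3 vertices give star cliques which, together with the two triangle cliques, produce too large a degree or the wrong count; this needs to be checked directly. Once these are excluded, triangles are vertex-disjoint, which is (4).

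Since $\Delta\le3$ and the triangles are disjoint, each triangle that is not a whole component contains at least one degree-3 vertex. This part is the main obstacle: I want to show that every triangle has a degree-2 vertex and that every degree-2 vertex lies in a triangle, with $d_1=0$. My first attempt is the counting approach. Each degree-2 vertex lies in at most one triangle. If a triangle contained two degree-2 vertices, connectivity would force it to be $K_3$ itself, which fails the isomorphism because $C_3(L(K_3))$ is a single vertex. So each triangle has at most one degree-2 vertex, and therefore the number of triangles containing a degree-2 vertex is at most $d_2$.

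If some triangle had all three vertices of degree 3, I would compare degrees instead. The triangle clique would then be adjacent to three star cliques, so it has degree 3 in $C_3(L(\Gamma))$. I would then use a degree-sequence comparison: the sum of degrees in $C_3(L(\Gamma))$ must equal $d_1+2d_2+3d_3$. I plan to show that degree-1 vertices and triangle-free degree-2 vertices produce a deficit that cannot be compensated. Specifically, a degree-1 or triangle-free degree-2 vertex $w$ adjacent to a degree-3 vertex $u$ lowers the degree of $u$'s star clique, but nothing in $C_3(L(\Gamma))$ has degree 1 unless $\dots$. If this becomes messy, the fallback is a direct structural matching: map $\Gamma$'s low-degree vertices injectively to triangles via the isomorphism, and use $t=d_1+d_2$ to force equality everywhere, which yields $d_1=0$, (2), and (3).
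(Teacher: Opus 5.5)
Your sufficiency direction is correct and matches the paper's. The necessity direction has a genuine gap at the step you flag as the ``main obstacle'': you never rule out a triangle $v_1v_2v_3$ whose three vertices all have degree $3$, and the degree-sum plan trails off.

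You already note that the triangle clique is adjacent to the three star cliques. What you miss is that the star cliques at $v_1,v_2,v_3$ are also pairwise adjacent, since the stars at $v_i$ and $v_j$ share the edge $v_iv_j$. So these four $3$-cliques span a $K_4$ in $C_3(L(\Gamma))\cong\Gamma$. Because $\Delta(\Gamma)\le 3$ and $\Gamma$ is connected, no edge leaves this $K_4$, so $\Gamma\cong K_4$. You have already excluded that case, since $C_3(L(K_4))$ has $8$ vertices. This is exactly the paper's argument.

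The same observation handles the diamond on triangles $abc,abd$ uniformly. There $d(a)=d(b)=3$, and the two star cliques at $a,b$ and the two triangle cliques all contain the vertex $ab$ of $L(\Gamma)$, giving a $K_4$ again. Your stated reason (``too large a degree or the wrong count'') is not right for the bare diamond. There $C_3(L(\Gamma))$ also has $2+2=4$ vertices, all of degree $3$; what fails is that it is $K_4$ rather than the diamond.

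Separately, your claim that a triangle with two degree-$2$ vertices forces $\Gamma\cong K_3$ is false. The triangle $xyz$ with a pendant edge $zw$ is connected, has $d(x)=d(y)=2$, and is not $K_3$. Only three degree-$2$ vertices on a triangle force $\Gamma\cong K_3$. Triangles with exactly two degree-$2$ vertices must instead be removed by counting, as in the paper.

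Write $t_j$ for the number of triangles with exactly $j$ vertices of degree $2$. Once $t_0=t_3=0$, your identity becomes $t_1+t_2=d_1+d_2$. Since each degree-$2$ vertex lies in at most one triangle, $d_2\ge t_1+2t_2$. Together these give $d_1+t_2\le 0$, hence $d_1=t_2=0$ and $d_2=t_1$, which is precisely (1), (2), (3).

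Without the $t_0=0$ step this count yields only $t_0\ge d_1+t_2$, which is no contradiction. So your ``structural matching'' fallback cannot close the argument until the all-degree-$3$ triangle is excluded by the $K_4$ observation above.
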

The simplest example of a graph with this property is shown in Figure \ref{fig:iso_tri1} and its line graph is shown in Figure \ref{fig:iso_tri_line1}. Observe the bijective adjacency preserving mapping between the vertices of Figure \ref{fig:iso_tri1} and the triangles of Figure \ref{fig:iso_tri_line1}.
\begin{figure}[htbp]
    \begin{subfigure}{.5\textwidth}
      \centering
      \includegraphics[width=1\linewidth]{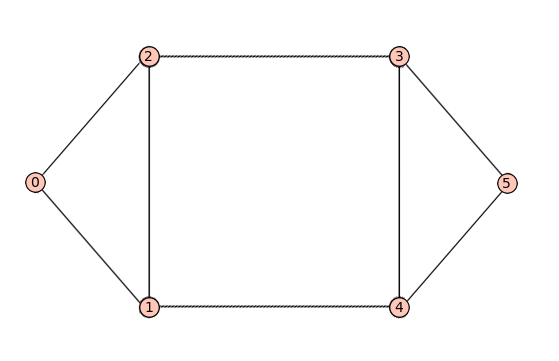}
      \caption{A graph satisfying (1), (2), (3), and (4)}
      \label{fig:iso_tri1}
    \end{subfigure}
    \begin{subfigure}{.5\textwidth}
      \centering
      \includegraphics[width=1\linewidth]{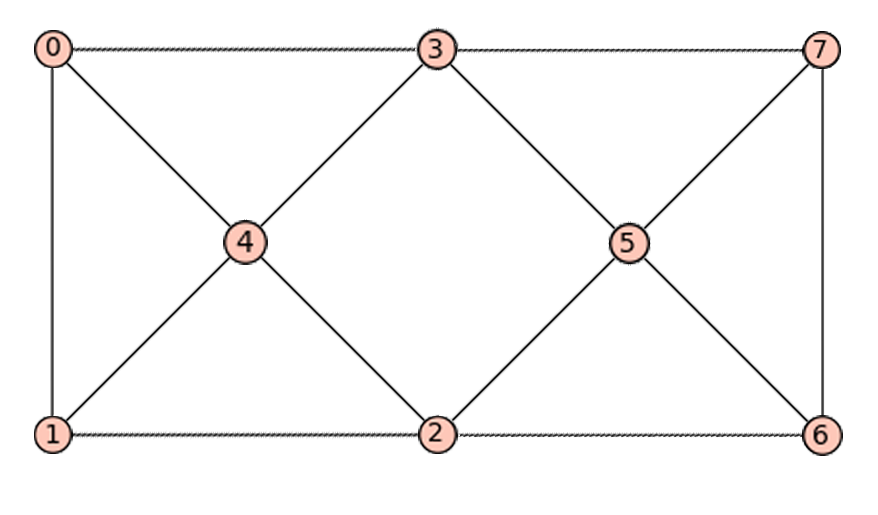}
      \caption{The line graph of (a)}
      \label{fig:iso_tri_line1}
    \end{subfigure}
    \label{fig:tri_subgraph1}
    \caption[An example of a graph satisfying the hypothesis of Theorem \ref{thm:3ci}]{}
    \end{figure}
\begin{proof}
    Begin by assuming (1), (2), (3), and (4). From these, observe that each triangle and each degree two vertex in $\Gamma$ must exist in a subgraph isomorphic to Figure \ref{fig:iso_tri}, which has line graph isomorphic to Figure \ref{fig:iso_tri_line}.
        \begin{figure}[htbp]
    \begin{subfigure}{.5\textwidth}
      \centering
      \includegraphics[width=.8\linewidth]{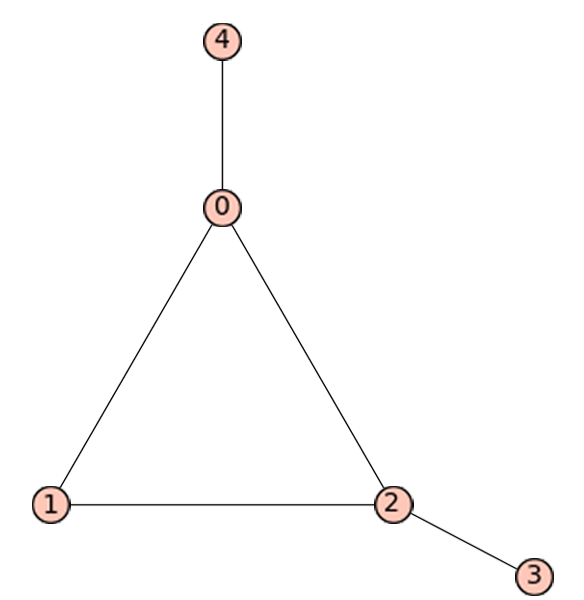}
      \caption{Subgraph of $\Gamma$}
      \label{fig:iso_tri}
    \end{subfigure}
    \begin{subfigure}{.5\textwidth}
      \centering
      \includegraphics[width=.75\linewidth]{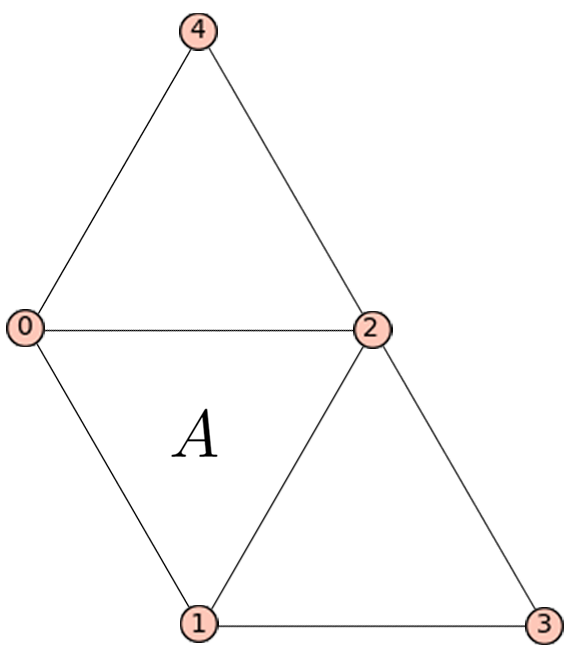}
      \caption{Subgraph of $L(\Gamma)$}
      \label{fig:iso_tri_line}
    \end{subfigure}
    \caption[Graphs used to show $C_3(L(\Gamma))\cong \Gamma$ in the proof of Theorem \ref{thm:3ci}]{}
    \end{figure}
    We define the mapping from the vertices of $\Gamma$ to the vertices of $C_3(L(\Gamma))$ by sending a degree $3$ vertex to the triangle it induces in $L(\Gamma)$ by Lemma \ref{lem:d(v)}, and sending a degree $2$ vertex to the triangle labeled $A$ in Figure \ref{fig:iso_tri_line}; the triangle in $L(\Gamma)$ induced by the unique triangle in $\Gamma$ containing the degree $2$ vertex as implied by (2). Since by Remark \ref{remark}, the only triangles in $L(\Gamma)$ are those created by a degree $3$ vertex or those induced by a triangle in $\Gamma$ with unique degree $2$ vertex, this mapping is bijective. We will now show it preserves adjacency. Let $u$ and $v$ be vertices in $\Gamma$. If $u$ and $v$ both have degree 3, the vertices they map to in $C_3(L(\Gamma))$ are adjacent if and only if $u$ and $v$ are adjacent by the same argument as in Theorem \ref{thm:4ci}. If both vertices have degree $2$, then they cannot be adjacent since by (2), this would imply they are in the same triangle, contradicting (3). And the vertices they map to in $C_3(L(\Gamma))$ are not adjacent since this would imply that two edges in the distinct triangles containing $u$ and $v$ share a vertex, contradicting (4). So now assume WLOG that $d(u)=2$ and $d(v)=3$. If $u$ and $v$ are adjacent in $\Gamma$, then Figure \ref{fig:iso_tri} is a subgraph of $\Gamma$ with $u$ as the vertex $1$ and $v$ as the vertex $0$. So $v$ is in the triangle in $\Gamma$ containing $u$ and the triangles they induce in $L(\Gamma)$ share the vertex $uv$. If the vertices that $u$ and $v$ map to in $C_3(L(\Gamma))$ are adjacent, then the triangles induced by the triangle containing $u$ and the edges incident to $v$ share some vertex in $L(\Gamma)$. This means the triangle containing $u$ in $\Gamma$ has some edge incident to $v$, implying $u$ and $v$ are adjacent. Thus $C_3(L(\Gamma)) \cong \Gamma$.\par
    For the converse, assume that $C_3(L(\Gamma)) \cong \Gamma$ which means by Lemma \ref{lem:delta} that $\Delta(\Gamma) \leq 3$. So for $1\leq i \leq 3$, let $d_i$ denote the number of vertices of degree $i$, and for $0 \leq j \leq 3$ let $t_j$ denote the number of triangles with $j$ vertices of degree $2$. From Remark \ref{remark}, the number of vertices in $C_3(L(\Gamma))$ is given by $d_3 + \sum_{j=0}^3 t_j$. First we will show $t_0 = t_3 = 0$. Since $\Gamma$ is connected, if there exists a triangle with $3$ vertices of degree $2$ then $\Gamma \cong K_3$ but $C_3(L(K_3)) \cong K_1$, a contradiction. If there exists a triangle with no vertices of degree $2$, then Figure \ref{fig:tri_t3} is a subgraph of $\Gamma$ where vertices 3, 4 and 5 may or may not be distinct. So Figure \ref{fig:tri_t3_line} is a subgraph of $L(\Gamma)$.\par
            \begin{figure}[htbp]
    \begin{subfigure}{.5\textwidth}
      \centering
      \includegraphics[width=.8\linewidth]{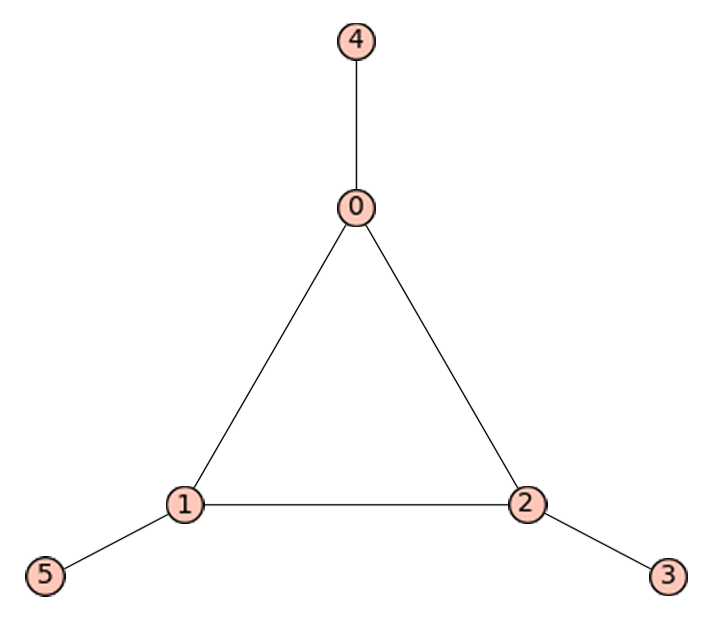}
      \caption{Subgraph of $\Gamma$}
      \label{fig:tri_t3}
    \end{subfigure}
    \begin{subfigure}{.5\textwidth}
      \centering
      \includegraphics[width=.8\linewidth]{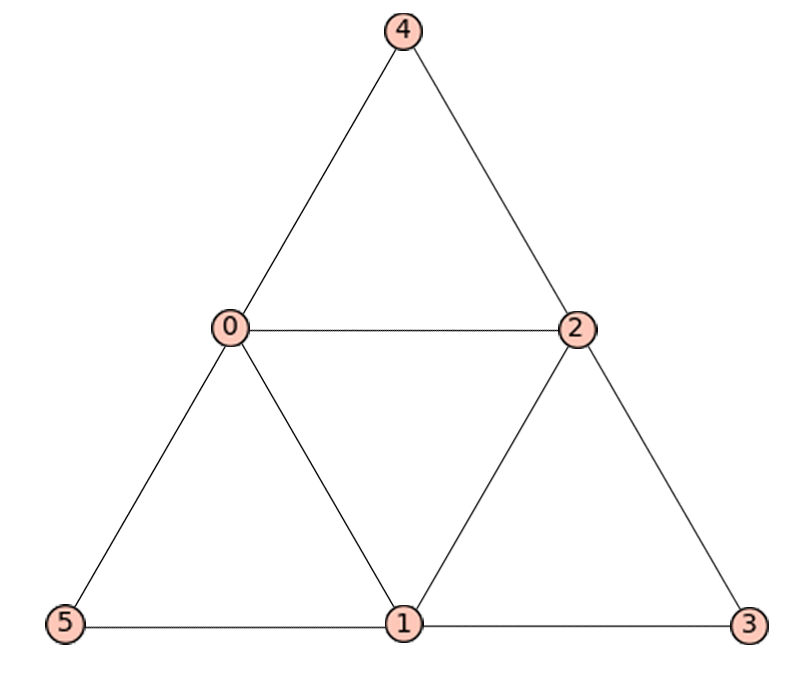}
      \caption{Subgraph of $L(\Gamma)$}
      \label{fig:tri_t3_line}
    \end{subfigure}
    \caption[Graphs used to show (1), (2), and (3) in the proof of Theorem \ref{thm:3ci}]{}
    \end{figure}
    Figure \ref{fig:tri_t3_line} consists of four triangles with each pair sharing at least one vertex, so $K_4$ is a subgraph of $C_3(L(\Gamma)) \cong \Gamma$. Since $\Gamma$ is connected and $\Delta(\Gamma) \leq 3$, we must have $\Gamma \cong K_4$. But it can be verified that $C_3(L(K_4)) \not \cong K_4$, a contradiction.\\
    So we have $t_0=t_3=0$ implying the number of vertices in $C_3(L(\Gamma))$ is $d_3 + t_1 + t_2$ and since the number of vertices in $\Gamma$ is given by $d_1 + d_2 + d_3$, their isomorphism implies $t_1 + t_2 = d_1 + d_2$. A counting argument shows that $d_2 \geq t_1 + 2t_2$ implying 
    \begin{align*}
        t_1 + t_2 &= d_1 + d_2 \\
        t_1 + t_2 &\geq d_1 + t_1 + 2t_2 \\
        -t_2 &\geq d_1.
    \end{align*}
    Thus $t_2 = d_1 = 0$, since $0 \geq -t_2 \geq d_1 \geq 0$. This implies $(1)$ and $(3)$ and since $d_2 = t_1$, we have $(2)$ as well. To show (4), observe from (1), (2), and (3) that if two distinct triangles share vertices in $\Gamma$, Figure \ref{fig:line_tri_subgraph} must be a subgraph of $\Gamma$ and so Figure \ref{fig:disjoint_tri} is a subgraph of $L(\Gamma)$.\par
        \begin{figure}[htbp]
    \begin{subfigure}{.5\textwidth}
      \centering
      \includegraphics[width=.8\linewidth]{line_tri_subgraph.png}
      \caption{Subgraph of $\Gamma$}
      \label{fig:line_tri_subgraph}
    \end{subfigure}
    \begin{subfigure}{.5\textwidth}
      \centering
      \includegraphics[width=.6\linewidth]{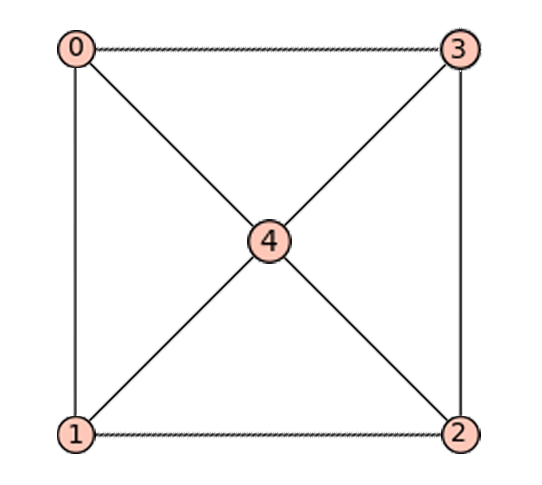}
      \caption{Subgraph of $L(\Gamma)$}
      \label{fig:disjoint_tri}
    \end{subfigure}
    \caption[Graphs used to show (4) in the proof of Theorem \ref{thm:3ci}]{}
    \end{figure}
    Figure \ref{fig:disjoint_tri} consists of four triangles all sharing vertex 4, so $K_4$ is a subgraph of $C_3(L(\Gamma))$. But as shown above, this leads to a contradiction so we get (4).
 \end{proof}

\chapter{Spectral Graph Theory}
Given a fixed ordering of vertices $V=\{v_1, v_2, \dots, v_n\}$ the \textbf{adjacency matrix} of $\Gamma$ is defined as the $n \times n$ matrix $A$ with rows and columns indexed by the vertices of $\Gamma$, with
\[(A)_{ij} = 
    \begin{cases}
        1 & \text{if $v_i$ is adjacent to $v_j$}\\
        0 & \text{otherwise}
    \end{cases}\]
where $(A)_{ij}$ is the $i, j^\text{th}$ entry of $A$. We denote by $I_n$ the $n\times n$ identity matrix. We denote the characteristic polynomial of any $n\times n$ matrix $A$ as $p(A;\lambda)=\det(\lambda I_n-A)$ and for a graph $\Gamma$, $p(\Gamma;\lambda)$ denotes the characteristic polynomial of the adjacency matrix of $\Gamma$. \par
The \textbf{spectrum} of a graph $\Gamma$ is the set of eigenvalues of the adjacency matrix of $\Gamma$ along with their multiplicities. It is denoted $\lambda_0^{a_0}, \ldots , \lambda_r^{a_r}$ where the exponent represents the multiplicity of that eigenvalue. The spectrum of a graph contains information on a number of different structural properties of the graph, which is why it is a widely studied graph invariant, and why we study the spectrum of clique graphs.\par
One such property of the spectrum is that the largest eigenvalue of a graph is always less than or equal to the maximum degree of a vertex, with equality if and only if the graph is regular \cite[Proposition 3.1.2]{haemers}. This means a $k$-regular graph has $k$ as its largest eigenvalue, and these facts will be useful later in this chapter and the next.
\section{Spectrum of a Clique Graph}
In this section, we will provide results on the spectrum of clique regular graphs and their clique graphs. We will provide bounds on their eigenvalues and in the case that $\Gamma$ is $k$-regular, an exact specification on the spectrum of $C_\omega(\Gamma)$.\par
Let $m$ be the number of $\omega$-cliques in an $\omega$-clique regular graph $\Gamma$ so that $m\binom{\omega}{2}$ is the number of edges in $\Gamma$. Let $A_C$ be the adjacency matrix of $C_\omega(\Gamma)$ with vertices indexed $(c_1, \ldots , c_{m})$ and $A_L$ be the adjacency matrix of $L(\Gamma)$ with vertices indexed $(e_1, \; \ldots, \;e_{m\binom{\omega}{2}})$ such that edges $\{e_{(i-1)\binom{\omega}{2} +1},\; \ldots, \; e_{i\binom{\omega}{2}}\}$ constitute clique $c_i$ for all $1 \leq i \leq m$, i.e. clique $c_1$ contains edges $e_1$ through $e_{\binom{\omega}{2}}$, clique $c_2$ contains edges $e_{\binom{\omega}{2}+1}$ through $e_{2\binom{\omega}{2}}$ and so on.
Now define $\tilde{A} = 6\binom{\omega}{3}I_{m} + (\omega -1)^2A_C$ and
define $\varphi : \mathbb{R}^{m} \rightarrow \mathbb{R}^{m\binom{\omega}{2}}$ by 
\[\varphi\left(\begin{bmatrix}a_1\\a_2\\\vdots\\a_{m}
\end{bmatrix}\right)=\begin{bmatrix}\Vec{a}_1\\\Vec{a}_2\\\vdots\\\Vec{a}_{m}\end{bmatrix} \text{ where } \:\ \Vec{a}:=
    \begin{bmatrix}a\\\vdots\\a\end{bmatrix} \in \mathbb{R}^{\binom{\omega}{2}}.\]
So the $\varphi$ function expands a column vector by a factor of $\binom{\omega}{2}$. For example, if $m=2$ and $\omega = 3$ then $\varphi\left(\begin{bmatrix}1\\2\end{bmatrix}\right)=\begin{bmatrix}1 &1&1&2&2&2\end{bmatrix}^\top$.
\begin{lem} \label{lem:phi}
If $\Gamma$ is $\omega$-clique regular with $\tilde{A},\; \varphi$, and $A_L$ as above, then for all $v \in \mathbb{R}^{m}$, \[v^\top \tilde{A}v = \varphi(v)^\top A_L \varphi(v).\]
\end{lem}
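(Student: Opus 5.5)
The plan is to expand both sides as sums over ordered pairs of indices and match them term by term. Write $v=(a_1,\dots,a_m)^\top$, and for an edge $e$ of $\Gamma$ let $c(e)$ denote the unique $\omega$-clique containing it. Because $\Gamma$ is $\omega$-clique regular, every edge lies in exactly one $\omega$-clique. So the blocks of indices $\{e_{(i-1)\binom{\omega}{2}+1},\dots,e_{i\binom{\omega}{2}}\}$ partition the edge set, and the $e$-th coordinate of $\varphi(v)$ is $a_{c(e)}$. Hence
\[\varphi(v)^\top A_L\varphi(v)=\sum_{(e,f)} a_{c(e)}a_{c(f)},\]
where the sum runs over ordered pairs of distinct edges of $\Gamma$ sharing an endpoint. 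The left side is
\[v^\top\tilde{A}v = 6\binom{\omega}{3}\sum_i a_i^2+(\omega-1)^2\sum_{(i,j)} a_ia_j,\]
where the second sum runs over ordered pairs of adjacent cliques in $C_\omega(\Gamma)$. I will split the right-hand sum according to whether $c(e)=c(f)$.

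First, take pairs with $c(e)=c(f)=c_i$. Inside a $K_\omega$, each edge meets exactly $2(\omega-2)$ other edges at a common endpoint. The number of ordered adjacent pairs is therefore
\[\binom{\omega}{2}\cdot 2(\omega-2)=\omega(\omega-1)(\omega-2)=6\binom{\omega}{3}.\]
This gives the contribution $6\binom{\omega}{3}a_i^2$, matching the diagonal term.

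Second, take pairs with $c(e)=c_i\neq c_j=c(f)$. The key observation is that two distinct $\omega$-cliques share at most one vertex: if they shared two vertices $x,y$, the edge $xy$ would lie in two $\omega$-cliques, contradicting clique regularity. If $c_i\cap c_j=\emptyset$, no edge of $c_i$ meets an edge of $c_j$, so the contribution is $0$, consistent with $(A_C)_{ij}=0$. If $c_i\cap c_j=\{x\}$, then an edge of $c_i$ shares an endpoint with an edge of $c_j$ exactly when both pass through $x$. An edge of $c_i$ avoiding $x$ has both endpoints outside $c_j$, and both edges cannot share their other endpoint since that would be a second common vertex. There are $\omega-1$ edges of $c_i$ through $x$ and $\omega-1$ of $c_j$ through $x$. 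This gives $(\omega-1)^2$ ordered pairs, and hence the contribution $(\omega-1)^2a_ia_j$, matching the off-diagonal term for adjacent cliques.

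Summing the two cases gives the identity. The only step requiring care is the ``at most one common vertex'' claim together with the exact count of meeting edge pairs in the second case. That is where clique regularity is used essentially; the remaining steps are bookkeeping.
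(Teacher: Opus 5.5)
Your proof is correct and follows essentially the same route as the paper. The paper reduces to standard basis vectors $t_i,t_j$ and sums each block $S_{ij}$ of $A_L$, which is exactly your grouping of ordered edge pairs by $(c(e),c(f))$, with the same counts $6\binom{\omega}{3}$, $(\omega-1)^2$ and $0$; your explicit check that distinct $\omega$-cliques share at most one vertex is a step the paper uses without comment.
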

\begin{proof}
    Let $t_i$ be the standard basis vector of $\mathbb{R}^{m}$ with a $1$  in the $i^{\text{th}}$ position for $1 \leq i \leq m$.
    It is sufficient to show that for all $t_i$ and $t_j$
    \[t_i^\top  \tilde{A}t_j = \varphi(t_i)^\top  A_L \varphi(t_j).\]
    Let $S_{ij}$ denote the sub-matrix of $A_L$ with rows corresponding to edges in $c_i$ and columns corresponding to edges in $c_j$. Observe that $\varphi(t_i)^\top  A_L \varphi(t_j)$ equals the sum of the entries of $S_{ij}$. Similar to above, it is clear that $t_i^\top  \tilde{A}t_j$ is the $1\times 1$ matrix with entry $(\tilde{A})_{ij}$.\par
    Now for the first case $i=j$. Let $e_k$ be an edge in $\Gamma$ with endpoints $x$ and $y$ in $c_i$. So $e_k$ corresponds to a row in $S_{ii}$. The vertex $x$ is incident to $\omega -1$ edges in $c_i$ one of which is $e_k$ and the same for vertex $y$. So $e_k$ is incident to $2(\omega-2)$ edges in $c_i$ implying the sum of the $e_k$ row in $S_{ii}$ is $2(\omega-2)$. Since there are $\binom{\omega}{2}$ rows in $S_{ii}$, the total sum of entries in $S_{ii}$ is $2(\omega-2)\binom{\omega}{2} = 6\binom{\omega}{3}$. So $t_i^\top  \tilde{A}t_i =\left[6\binom{\omega}{3}\right] = \varphi(t_i)^\top  A_L \varphi(t_i)$.\par
    Next for the case when $c_i$ is adjacent to $c_j$. Let $x$ be the unique vertex in $\Gamma$ that $c_i$ and $c_j$ share and let $e_k$ be an edge incident to $x$ with other endpoint in $c_i$. Then $e_k$ is incident to the $\omega-1$ edges of $x$ with other endpoints in $c_j$ implying the sum of the $e_k$ row in $S_{ij}$ is $\omega-1$. Since there are $\omega-1$ edges incident to $x$ with other endpoint in $c_i$, and since every edge in $c_i$ that isn't incident to $x$ is not incident to any edges in $c_j$, the total sum of the entries in $S_{ij}$ is $(\omega-1)^2$. So $t_i^\top  \tilde{A}t_j =\left[(\omega-1)^2\right] = \varphi(t_i)^\top  A_L \varphi(t_j)$.\par
    Finally for the last case when $i \neq j$ and $c_i$ is not adjacent to $c_j$. Let $e_k$ be an edge in $c_i$. Then since $c_i$ and $c_j$ don't share common vertices, $e_k$ is not incident to any edges in $c_j$ implying that the total sum of the entries in $S_{ij}$ is 0.\\So $t_i^\top  \tilde{A}t_j =\left[0\right] = \varphi(t_i)^\top  A_L \varphi(t_j)$.
\end{proof}

\begin{thm} \label{thm:bounds}
    Suppose $\Gamma$ is $\omega$-clique regular and the eigenvalues of $L(\Gamma)$ are $\mu_1 \leq \cdots \leq \mu_{m\binom{\omega}{2}}$. Then for each eigenvalue $\lambda$ of $C_\omega(\Gamma)$,
    \[\frac{\omega}{\omega - 1}\left(\frac{\mu_1}{2} -\omega +2\right) \leq \lambda \leq \frac{\omega}{\omega - 1}\left(\frac{\mu_{m\binom{\omega}{2}}}{2} -\omega +2\right).\]
\end{thm}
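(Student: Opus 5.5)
Lemma \ref{lem:phi} turns this into a Rayleigh quotient comparison. Let $\lambda$ be an eigenvalue of $C_\omega(\Gamma)$ with a real eigenvector $v\neq 0$, so $A_C v=\lambda v$. Then
\[v^\top \tilde{A} v = 6\binom{\omega}{3}\|v\|^2 + (\omega-1)^2\lambda \|v\|^2 .\]
By Lemma \ref{lem:phi}, this equals $\varphi(v)^\top A_L\varphi(v)$. Since $A_L$ is real symmetric, the Rayleigh quotient bounds give
\[\mu_1\|\varphi(v)\|^2 \le \varphi(v)^\top A_L\varphi(v)\le \mu_{m\binom{\omega}{2}}\|\varphi(v)\|^2 .\]

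Next we compare norms. The map $\varphi$ repeats each coordinate $\binom{\omega}{2}$ times, so $\|\varphi(v)\|^2=\binom{\omega}{2}\|v\|^2$. Substituting this and dividing by $\|v\|^2>0$ gives
\[\mu_1\binom{\omega}{2}\le 6\binom{\omega}{3}+(\omega-1)^2\lambda\le \mu_{m\binom{\omega}{2}}\binom{\omega}{2}.\]

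What remains is algebra. We have $6\binom{\omega}{3}=\omega(\omega-1)(\omega-2)$ and $\binom{\omega}{2}=\omega(\omega-1)/2$. Subtract $\omega(\omega-1)(\omega-2)$ and divide by $(\omega-1)^2>0$, which preserves the inequalities since $\omega\ge 2$. This gives
\[\frac{\omega}{\omega-1}\left(\frac{\mu_1}{2}-(\omega-2)\right)\le\lambda\le\frac{\omega}{\omega-1}\left(\frac{\mu_{m\binom{\omega}{2}}}{2}-(\omega-2)\right),\]
which is exactly the claimed bound because $-(\omega-2)=-\omega+2$.

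There is no serious obstacle. The only points needing care are that $A_C$ is symmetric, so we may take a real eigenvector, and that the norm identity for $\varphi$ holds. The substantive combinatorial content was already handled in Lemma \ref{lem:phi}.
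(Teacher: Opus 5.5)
Your proof is correct and takes essentially the same route as the paper. Both arguments combine Lemma~\ref{lem:phi} with the norm identity $\|\varphi(v)\|^2=\binom{\omega}{2}\|v\|^2$ and then do the same algebra. The only cosmetic difference is that the paper normalizes $v$ so that $\varphi(v)^\top\varphi(v)=I_1$ and cites eigenvalue interlacing for the $1\times 1$ compression $\varphi(v)^\top A_L\varphi(v)$; that is exactly the Rayleigh quotient bound you use directly.
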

\begin{proof}
    For this proof $\|t\|$ will denote the euclidean norm of vector $t$. Let $\lambda$ be an eigenvalue of $A_C$ with eigenvector $u$. Clearly, $A_C$ and $\tilde{A} = 6\binom{\omega}{3}I_{m} + (\omega -1)^2A_C$ share the same eigenvectors and $\eta= 6\binom{\omega}{3}+(\omega-1)^2\lambda$ is an eigenvalue of $\tilde{A}$ with the eigenvector $u$. Now define $v = \frac{u}{\|u\|\sqrt{\binom{\omega}{2}}}$ so that $\|v\|^2=\frac{1}{\binom{\omega}{2}}$ and $v$ is an eigenvector of $\tilde{A}$ with eigenvalue $\eta$. Notice that 
    \[\varphi(v)^\top \varphi(v)=\left[|\varphi(v)|^2\right]=\left[\binom{\omega}{2}\|v\|^2\right]=[1]=I_1.\]
    By the Eigenvalue Interlacing Theorem, \cite[pg. 26]{haemers} this implies that the eigenvalue of the $1 \times 1$ matrix $\varphi(v)^\top  A_L \varphi(v)$ interlaces the eigenvalues of $A_L$. By Lemma \ref{lem:phi}, 
    \[\varphi(v)^\top  A_L \varphi(v)  = v^\top  \tilde{A}v = \eta v^\top  v =\left[\eta\|v\|^2\right]=\left[\frac{\eta}{\binom{\omega}{2}}\right].\] So the eigenvalue $\frac{\eta}{\binom{\omega}{2}}$ interlaces the eigenvalues of $A_L$, $\mu_1 \leq \frac{\eta}{\binom{\omega}{2}} \leq \mu_{m\binom{\omega}{2}}$. \\Since $\eta=6\binom{\omega}{3}+(\omega - 1)^2\lambda$, it follows that
     \[\frac{1}{(\omega - 1)^2}\left(\binom{\omega}{2} \mu_1-6\binom{\omega}{3}\right) \leq \lambda \leq \frac{1}{(\omega - 1)^2}\left(\binom{\omega}{2} \mu_{m\binom{\omega}{2}}-6\binom{\omega}{3}\right).\]
    Equivalently,
    \[\frac{\omega}{\omega - 1}\left(\frac{\mu_1}{2} -\omega +2\right) \leq \lambda \leq \frac{\omega}{\omega - 1}\left(\frac{\mu_{m\binom{\omega}{2}}}{2} -\omega +2\right).\]
\end{proof}
From this theorem, we can also derive bounds on the eigenvalues that don't require any knowledge of the line graph.
\begin{cor} \label{cor:eigens}
    If $\Gamma$ is $\omega$-clique regular then for each eigenvalue $\lambda$ of $C_\omega(\Gamma)$, \[-\omega \leq \lambda \leq \omega\left(\frac{\Delta(\Gamma)}{\omega-1}-1\right)\] where $\Delta(\Gamma)$ denotes the largest degree of $\Gamma$.
\end{cor}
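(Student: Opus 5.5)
We will derive both inequalities from Theorem \ref{thm:bounds} by inserting standard bounds on the extreme eigenvalues $\mu_1$ and $\mu_{m\binom{\omega}{2}}$ of the line graph $L(\Gamma)$. Only two facts about line graphs are needed: the smallest eigenvalue satisfies $\mu_1 \geq -2$, and the largest satisfies $\mu_{m\binom{\omega}{2}} \leq \Delta(L(\Gamma)) \leq 2\Delta(\Gamma)-2$.

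For the lower bound we use the incidence matrix. Let $N$ be the vertex–edge incidence matrix of $\Gamma$. Then $N^\top N = 2I + A_L$. Since $N^\top N$ is positive semidefinite, every eigenvalue of $A_L$ is at least $-2$, so $\mu_1 \geq -2$. Substituting into the left inequality of Theorem \ref{thm:bounds} gives
\[\lambda \geq \frac{\omega}{\omega-1}\left(-1-\omega+2\right)=\frac{\omega}{\omega-1}(1-\omega)=-\omega.\]

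For the upper bound we bound degrees in $L(\Gamma)$. An edge $xy$ of $\Gamma$ is adjacent in $L(\Gamma)$ to $(d(x)-1)+(d(y)-1)$ other edges, so $\Delta(L(\Gamma)) \leq 2\Delta(\Gamma)-2$. By the fact quoted from \cite[Proposition 3.1.2]{haemers}, the largest eigenvalue is at most the maximum degree, so $\mu_{m\binom{\omega}{2}} \leq 2\Delta(\Gamma)-2$. Substituting into the right inequality of Theorem \ref{thm:bounds} gives
\[\lambda \leq \frac{\omega}{\omega-1}\left(\Delta(\Gamma)-1-\omega+2\right)=\frac{\omega}{\omega-1}\left(\Delta(\Gamma)-(\omega-1)\right)=\omega\left(\frac{\Delta(\Gamma)}{\omega-1}-1\right),\]
which is the claimed bound.

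There is no real obstacle. The only ingredient not already stated in the paper is $\mu_1\geq -2$ for line graphs, and it follows in one line from the factorization $N^\top N = 2I + A_L$, so we will include that derivation rather than just cite it. The rest is the algebra above: the factor $\frac{\omega}{\omega-1}$ is positive, so the inequalities are preserved when substituting.
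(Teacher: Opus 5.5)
Your proposal is correct and follows the paper's own proof: substitute $\mu_1 \geq -2$ and $\mu_{m\binom{\omega}{2}} \leq \Delta(L(\Gamma)) \leq 2(\Delta(\Gamma)-1)$ into Theorem \ref{thm:bounds}. The only difference is that you derive $\mu_1 \geq -2$ from the positive semidefinite factorization $N^\top N = 2I + A_L$, whereas the paper cites both facts as classical results.
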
 
\begin{proof}
    Let $\mu_1 \leq \cdots \leq \mu_{m\binom{\omega}{2}}$ be the eigenvalues of $L(\Gamma)$. Two classical results in spectral graph theory are that for any line graph, $-2 \leq \mu_1$ and $\mu_{m\binom{\omega}{2}} \leq \Delta(L(\Gamma))\leq 2(\Delta(\Gamma)-1)$. From these we get 
    \[ -\omega \leq \frac{\omega}{\omega - 1}\left(\frac{\mu_1}{2} -\omega +2\right)\] and \[ \frac{\omega}{\omega - 1}\left(\frac{\mu_{m\binom{\omega}{2}}}{2} -\omega +2\right) \leq \omega\left(\frac{\Delta(\Gamma)}{\omega-1}-1\right).\]
    So the result follows from Theorem \ref{thm:bounds}.
\end{proof}\par
If $\Gamma$ is an $\omega$-clique regular graph and also $k$-regular, then we can show that the characteristic polynomial of $C_\omega(\Gamma)$ is a function of the characteristic polynomial of $\Gamma$. Recall that $p(\Gamma; \lambda)$ denotes the characteristic polynomial of $\Gamma$'s adjacency matrix and that the roots of this polynomial are the eigenvalues of $\Gamma$. Let $n$ be the number of vertices in $\Gamma$ and $m=\frac{nk}{\omega (\omega -1)}$ be the number of $\omega$-cliques in $\Gamma$, i.e. the number of vertices in $C_\omega(\Gamma)$.\par
Let $A$ be the adjacency matrix of $\Gamma$ with vertices indexed $(v_1, \ldots , v_n)$, and let $A_C$ be the adjacency matrix of $C_\omega(\Gamma)$ with $\omega$-cliques indexed $(c_1, \ldots , c_m)$. We will define the \textbf{$\omega$-clique incidence matrix} of $\Gamma$ as an $n\times m$ matrix $R$ such that the rows are indexed by the vertices in $\Gamma$ in the same order as in $A$, and the columns are indexed by the $\omega$-cliques in $\Gamma$ in the same order as in $A_C$. The entries of $R$ are defined,
\[(R)_{ij} = 
    \begin{cases}
        1 & \text{if } v_i \in c_j\\
        0 & \text{otherwise.}
    \end{cases}\]
Recall also that the \textbf{degree matrix} of $\Gamma$ is defined as $D=\text{diag}(d(v_1), \ldots , d(v_n))$.

\begin{lem} \label{lem:blockmat}
Suppose that $\Gamma$ is $\omega$-clique regular with $\omega$-clique incidence matrix $R$ and degree matrix $D$. Then:\vspace{0.2cm} \\
           \hspace*{0.5cm} (1) $R^\top R=A_C + \omega I_m$ and  \\
           \hspace*{0.5cm} (2) $RR^\top  = A + \frac{1}{\omega -1}D$. \vspace{0.2cm}
\end{lem}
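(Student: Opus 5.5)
We compute both products entrywise, using only the definition of $R$ and the fact that in an $\omega$-clique regular graph every edge lies in exactly one $\omega$-clique.

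For (1), the $(i,j)$ entry of $R^\top R$ is $\sum_{x} R_{xi}R_{xj}$, which counts the vertices of $\Gamma$ lying in both $c_i$ and $c_j$. When $i=j$ this count is $|c_i|=\omega$. When $i\neq j$, we claim $|c_i\cap c_j|\le 1$. Indeed, if two distinct vertices $x,y$ both lay in $c_i\cap c_j$, then $x$ and $y$ would be adjacent, since $c_i$ is a clique, and the edge $xy$ would lie in the two distinct $\omega$-cliques $c_i$ and $c_j$. This contradicts $\omega$-clique regularity. So the off-diagonal entry equals $1$ exactly when $c_i$ and $c_j$ intersect, that is, when they are adjacent in $C_\omega(\Gamma)$, and equals $0$ otherwise. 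This gives $R^\top R=A_C+\omega I_m$.

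For (2), the $(x,y)$ entry of $RR^\top$ is $\sum_j R_{xj}R_{yj}$, which counts the $\omega$-cliques containing both $x$ and $y$. Suppose first that $x\neq y$. If $x$ and $y$ are adjacent, the edge $xy$ lies in exactly one $\omega$-clique, so the entry is $1$. If they are non-adjacent, no clique contains both, so the entry is $0$. Thus the off-diagonal part of $RR^\top$ is $A$.

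The diagonal entry at $x$ is the number of $\omega$-cliques containing $x$, and this is the one step that needs an argument. We double count the pairs (edge at $x$, $\omega$-clique containing that edge). Each of the $d(x)$ edges at $x$ lies in exactly one $\omega$-clique, so there are $d(x)$ such pairs. Each $\omega$-clique containing $x$ contains exactly $\omega-1$ edges at $x$. Hence the number of $\omega$-cliques through $x$ is $d(x)/(\omega-1)$, which is the $x$-th diagonal entry of $\frac{1}{\omega-1}D$.

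Combining the diagonal and off-diagonal parts gives $RR^\top = A+\frac{1}{\omega-1}D$.
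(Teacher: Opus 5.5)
Your proof is correct and uses the same entrywise counting argument as the paper, including the same count of the $\omega-1$ edges at a vertex inside each clique to get the diagonal of $RR^\top$. You also justify a step the paper takes for granted: two distinct $\omega$-cliques share at most one vertex, since a shared edge would lie in two different $\omega$-cliques.
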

\begin{proof}
    $(1)$ We have
    \[(R^\top R)_{ij} = \sum_{l=1}^n (R)_{li}(R)_{lj},\] 
    from which it is clear that $(R^\top R)_{ij}$ is the number of vertices that are in both clique $c_i$ and $c_j$. If $i=j$, then the sum will equal $\omega$, which is the number of vertices in $c_i$. So $(R^\top R)_{ii} = \omega =(A_C + \omega I_m)_{ii}$. If $c_i$ and $c_j$ are adjacent, then they share one vertex in common. So $(R^\top R)_{ij} = 1 = (A_C + \omega I_m)_{ij}$. Otherwise $(R^\top R)_{ij}=0=(A_C + \omega I_m)_{ij}$.\\
    $(2)$ Similarly, we have that $(RR^\top )_{ij}$ is the number of cliques that $v_i$ and $v_j$ commonly belong to. If $i=j$, then the sum will equal the number of cliques that contain $v_i$. Since $v_i$ has degree $d(v_i)$, and each clique with $v_i$ contains $\omega -1$ edges adjacent to $v_i$ unique to that clique, $\frac{d(v_i)}{\omega -1}$ is the number of cliques that $v_i$ belongs to. So $(RR^\top )_{ij} = \frac{d(v_i)}{\omega -1} = (A + \frac{1}{\omega -1}D)_{ij}$. If $v_i$ and $v_j$ are adjacent, then the edge between them belongs to a unique clique so $(RR^\top )_{ij} = 1 = (A + \frac{1}{\omega -1}D)_{ij}$. Otherwise $(RR^\top )_{ij} = 0 =(A + \frac{1}{\omega -1}D)_{ij}$.
\end{proof}
\begin{thm} \label{thm:eigen}
    If $\Gamma$ is $k$-regular and $\omega$-clique regular, then 
    \[p(C_\omega(\Gamma); \lambda)=(\lambda+\omega)^{m-n}p\left(\Gamma;\lambda+\omega-\frac{k}{\omega - 1}\right)\]
    where $m=\frac{nk}{\omega(\omega-1)}$.
\end{thm}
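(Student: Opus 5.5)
The plan is to use the classical fact that for any $n\times m$ matrix $R$, the products $R^\top R$ and $RR^\top$ have the same nonzero eigenvalues with the same multiplicities. In characteristic-polynomial form this reads
\[\lambda^{n}\,p(R^\top R;\lambda)=\lambda^{m}\,p(RR^\top;\lambda).\]
We then combine this with Lemma \ref{lem:blockmat}. Since $\Gamma$ is $k$-regular, its degree matrix is $D=kI_n$. Part (2) of Lemma \ref{lem:blockmat} therefore gives
\[RR^\top=A+\tfrac{k}{\omega-1}I_n,\]
and part (1) gives
\[R^\top R=A_C+\omega I_m.\]

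Next, translate each characteristic polynomial. For any square matrix $M$ and scalar $c$ we have $p(M+cI;\lambda)=p(M;\lambda-c)$. This yields
\[p(R^\top R;\lambda)=p(C_\omega(\Gamma);\lambda-\omega),\qquad p(RR^\top;\lambda)=p\!\left(\Gamma;\lambda-\tfrac{k}{\omega-1}\right).\]
Substituting these into the identity from the first paragraph gives
\[\lambda^n\,p(C_\omega(\Gamma);\lambda-\omega)=\lambda^m\,p\!\left(\Gamma;\lambda-\tfrac{k}{\omega-1}\right).\]
Now replace $\lambda$ by $\lambda+\omega$ and divide by $(\lambda+\omega)^n$, as rational functions or formally. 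The result is exactly the claimed formula
\[p(C_\omega(\Gamma);\lambda)=(\lambda+\omega)^{m-n}\,p\!\left(\Gamma;\lambda+\omega-\tfrac{k}{\omega-1}\right).\]
Here $m=\frac{nk}{\omega(\omega-1)}$ is the number of cliques. This count follows by double counting edges: $m\binom{\omega}{2}=\frac{nk}{2}$.

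The main subtlety is that $m-n$ may be negative, i.e.\ when $k<\omega(\omega-1)$. In that case the formula still holds as an identity of polynomials. The reason is that $RR^\top$ is then $n\times n$ with rank at most $m$, so $\lambda^{n-m}$ divides $p(RR^\top;\lambda)$. After shifting, this means $(\lambda+\omega)^{n-m}$ divides $p(\Gamma;\lambda+\omega-\frac{k}{\omega-1})$, so the right-hand side is a genuine polynomial.

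To justify the starting identity $\lambda^n\det(\lambda I_m-R^\top R)=\lambda^m\det(\lambda I_n-RR^\top)$, I would cite the standard Sylvester determinant identity, $\det(I_n-XY)=\det(I_m-YX)$. One can also prove it directly by taking determinants of the block matrix
\[\begin{bmatrix}\lambda I_n & R\\ R^\top & I_m\end{bmatrix}\]
via both Schur complements. This step is routine, so the only care needed is the bookkeeping of the powers of $\lambda$ and the sign of $m-n$.
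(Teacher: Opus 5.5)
Your proposal is correct and follows essentially the same route as the paper: both combine the identity $x^m\det(xI_n-RR^\top)=x^n\det(xI_m-R^\top R)$ with Lemma \ref{lem:blockmat} and $D=kI_n$. The only difference is cosmetic: the paper derives the identity from $\det(UV)=\det(VU)$ for explicit block matrices evaluated directly at $x=\lambda+\omega$, while you obtain it from Sylvester's identity (or Schur complements) and shift afterwards; your remark that the right-hand side is a genuine polynomial when $m<n$ is extra care the paper leaves implicit.
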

\begin{proof}
    Define two square block matrices with $n+m$ rows and columns as follows with $R$ the $\omega$-clique incidence matrix of $\Gamma$,
    \[U = \begin{bmatrix}(\lambda+\omega) I_n & -R \\0 & I_m \\\end{bmatrix}, \hspace{1cm}
    V = \begin{bmatrix}I_n & R \\R^\top  & (\lambda+\omega) I_m \\\end{bmatrix}.\]
Then, 
\[UV = \begin{bmatrix}(\lambda+\omega) I_n - RR^\top  & 0 \\R^\top  & (\lambda+\omega) I_m \\\end{bmatrix}, 
    VU = \begin{bmatrix}(\lambda+\omega) I_n & 0 \\(\lambda+\omega) R^\top  & (\lambda+\omega) I_m - R^\top R \\\end{bmatrix}.\]
Since $\text{det}(UV)=\text{det}(VU)$, it follows that
\begin{align*}
    (\lambda+\omega)^m \text{det}((\lambda+\omega) I_n - RR^\top ) & =(\lambda+\omega)^n \text{det}((\lambda+\omega) I_m - R^\top R) \\ 
    (\lambda+\omega)^{m-n} \text{det}((\lambda+\omega) I_n - RR^\top ) & =\text{det}((\lambda+\omega) I_m - R^\top R).
\end{align*}
Using the above equality and Lemma \ref{lem:blockmat} we get,
\begin{align*}
    p(C_\omega(\Gamma);\lambda) & = \text{det}(\lambda I_m - A_C) \\ 
    & = \text{det}((\lambda + \omega)I_m - R^\top R)  \\
    & = (\lambda + \omega)^{m-n}\text{det}((\lambda + \omega)I_n - RR^\top ) \\
    & = (\lambda + \omega)^{m-n}\text{det}\left(\left(\lambda + \omega - \frac{k}{\omega -1}\right)I_n - A\right)\\
    & = (\lambda + \omega)^{m-n}p\left(\Gamma ; \lambda + \omega - \frac{k}{\omega -1}\right).
\end{align*}
\end{proof}
Note that for the previous lemma and theorem, taking $\omega =2$ gives the proof from Biggs \cite[p.18-19]{biggs} for the characteristic polynomial of the line graph of a regular graph. Here we generalized this proof for the $\omega$-clique graph of any graph that is regular and $\omega$-clique regular for any $\omega \geq 2$.\par
\begin{remark} \label{rem:eigen}
    Recall that the eigenvalues of a matrix are exactly the roots of the matrix's characteristic polynomial. So Theorem \ref{thm:eigen} implies that if the spectrum of a $k$-regular and $\omega$-clique regular graph is
    \[k^{a_0},\lambda_1^{a_1}, \ldots , \lambda_r^{a_r},\]
    then the spectrum of its $\omega$-clique graph is 
    \[\left(\frac{k}{\omega -1} + k -\omega \right)^{a_0}, \left(\frac{k}{\omega -1} + \lambda_1 -\omega \right)^{a_1}, \ldots, \left(\frac{k}{\omega -1} + \lambda_r -\omega \right)^{a_r}, -\omega^{m-n}\]
    where $n$ is the number of vertices and $m=\frac{nk}{\omega(\omega -1)}$ is the number of $\omega$-cliques.
\end{remark}
\clearpage
\begin{cor} \label{cor:lowbound}
    Suppose $\Gamma$ is $k$-regular and $\omega$-clique regular where the smallest eigenvalue $\lambda_1$ has multiplicity $a_1$. Then $\lambda_1 \geq \frac{-k}{\omega-1}$ and if $k < \omega(\omega -1)$ as well, then $\lambda_1 = \frac{-k}{\omega-1}$ and $a_1 \geq n-\frac{nk}{\omega(\omega -1)}.$
\end{cor}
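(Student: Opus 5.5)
The plan is to use the spectral relation of Remark \ref{rem:eigen} (equivalently, the Gram-matrix identity of Lemma \ref{lem:blockmat}) together with positive semidefiniteness. By Lemma \ref{lem:blockmat}(2), since $\Gamma$ is $k$-regular we have $D=kI_n$. Hence
\[RR^\top = A + \frac{k}{\omega-1}I_n.\]
The matrix $RR^\top$ is positive semidefinite, because $x^\top RR^\top x=\|R^\top x\|^2\ge 0$. So every eigenvalue of $A+\frac{k}{\omega-1}I_n$ is nonnegative, which gives $\lambda+\frac{k}{\omega-1}\geq 0$ for every eigenvalue $\lambda$ of $\Gamma$. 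In particular $\lambda_1\geq -\frac{k}{\omega-1}$, which is the first claim.

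For the second claim, assume $k<\omega(\omega-1)$. Then $m=\frac{nk}{\omega(\omega-1)}<n$. Since $R$ is $n\times m$, we get $\operatorname{rank}(RR^\top)\leq \operatorname{rank}(R)\leq m<n$. Therefore the nullspace of $RR^\top$ has dimension at least $n-m\ge 1$. On that nullspace $A$ acts as $-\frac{k}{\omega-1}$, so $-\frac{k}{\omega-1}$ is an eigenvalue of $A$ with multiplicity at least $n-m$. Combined with the lower bound just proved, this forces $\lambda_1=-\frac{k}{\omega-1}$ and $a_1\geq n-m=n-\frac{nk}{\omega(\omega-1)}$.

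As a cross-check, the same conclusion follows from Theorem \ref{thm:eigen}. The identity
\[(\lambda+\omega)^{n-m}\,p(C_\omega(\Gamma);\lambda)=p\!\left(\Gamma;\lambda+\omega-\tfrac{k}{\omega-1}\right)\]
shows that when $n>m$, the value $\lambda=-\omega$ is a root of the right side with multiplicity at least $n-m$. This means $-\frac{k}{\omega-1}$ is an eigenvalue of $\Gamma$ with multiplicity at least $n-m$. However, the rank argument is cleaner and avoids manipulating polynomials with a negative exponent.

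I do not expect a serious obstacle. The only points needing care are, first, that $k/(\omega-1)$ is an integer (each vertex lies in $k/(\omega-1)$ cliques), which is implicit in Lemma \ref{lem:blockmat}; and second, that the multiplicity bound uses $\dim\ker(RR^\top)=n-\operatorname{rank}(R)\ge n-m$. Neither point affects the argument.
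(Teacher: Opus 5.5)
Your proof is correct, and it takes a different route from the paper's. The paper gets the lower bound indirectly. It uses Remark~\ref{rem:eigen} to say that $\frac{k}{\omega-1}+\lambda_1-\omega$ is an eigenvalue of $C_\omega(\Gamma)$, then applies Corollary~\ref{cor:eigens}, whose bound $-\omega$ rests on interlacing with $L(\Gamma)$ (Theorem~\ref{thm:bounds}) and the classical fact that a line graph has smallest eigenvalue at least $-2$. The multiplicity is then read off from the factor $(\lambda+\omega)^{m-n}$ in Theorem~\ref{thm:eigen}, exactly as in your cross-check.

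You use only Lemma~\ref{lem:blockmat}(2). Positive semidefiniteness of $RR^\top=A+\frac{k}{\omega-1}I_n$ gives the bound at once, and $\mathrm{rank}(R)\le m$ gives an eigenspace of $A$ for $-\frac{k}{\omega-1}$ of dimension at least $n-m$. The line-graph bound $-2$ is itself proved by this same Gram-matrix argument, so your version removes the detour through $L(\Gamma)$ and interlacing.

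Your route also avoids a small imprecision in the paper's first step. When $m<n$ and $a_1=n-m$ exactly, the value $\frac{k}{\omega-1}+\lambda_1-\omega=-\omega$ is entirely cancelled by $(\lambda+\omega)^{m-n}$ and is not an eigenvalue of $C_\omega(\Gamma)$. This happens for $T_n$ with $n\ge 5$: its clique graph $K_n$ has no eigenvalue $-(n-1)$. So the paper really needs the extra remark that each shifted eigenvalue of $\Gamma$ is either an eigenvalue of $C_\omega(\Gamma)$ or equal to $-\omega$.

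The paper's route has the merit of tying the corollary to the clique-graph spectrum developed in that chapter. Yours is shorter and self-contained. With $D$ in place of $kI_n$, it even gives $\lambda_1\ge-\Delta(\Gamma)/(\omega-1)$ without assuming regularity.
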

\begin{proof}
    From Remark \ref{rem:eigen}, $\frac{k}{\omega -1}+\lambda_1 -\omega$ is an eigenvalue of $C_\omega(\Gamma)$ so from Corollary \ref{cor:eigens} $-\omega \leq \frac{k}{\omega -1} +\lambda_1 -\omega$ implies $\lambda_1 \geq \frac{-k}{\omega -1}$. If $k < \omega(\omega-1)$ then $m-n=\frac{nk}{\omega(\omega -1)}-n <0$. Since this is the exponent of the $(\lambda + \omega)$ term in the characteristic polynomial of $C_\omega(\Gamma)$ by Theorem \ref{thm:eigen}, there must be the same term in $p(\Gamma; \lambda +\omega - \frac{k}{\omega -1})$ with exponent greater than or equal to $n-\frac{nk}{\omega(\omega -1)}$. By above, this term must come from the smallest eigenvalue so we get $\lambda_1 = \frac{-k}{\omega -1}$ with multiplicity $a_1 \geq n - \frac{nk}{\omega(\omega -1)}$.
\end{proof}
\chapter{Strongly Regular Graphs}
A graph $\Gamma$ is called \textbf{strongly regular} with parameters ($n,k,\lambda,\mu$) and is abbreviated as srg($n,k,\lambda,\mu$) if
\begin{itemize}
    \item $\Gamma$ has $n$ vertices,
    \item $\Gamma$ is $k$-regular,
    \item Adjacent vertices share exactly $\lambda$ common neighbors, and
    \item Distinct non-adjacent vertices share exactly $\mu$ common neighbors.
\end{itemize}
Clearly, strongly regular graphs are a subset of edge regular graphs as defined in the introduction, but with an additional requirement on non-adjacent vertices.
 The parameters ($n,k,\lambda,\mu$) are closely related and must obey, 
\begin{equation}\label{eq:srg}(n-k-1)\mu=k(k-\lambda-1),\end{equation}
 \cite[Theorem 9.1.3]{haemers}.\par
 A graph $\Gamma$ is strongly regular if and only if it is regular with 3 or fewer distinct eigenvalues \cite[Theorem 9.1.2]{haemers}. So we can classify strongly regular graphs as either ``boring'' (graphs with one or two eigenvalues), disjoint unions of complete graphs and their complements, or ``non-boring'' if otherwise. Researchers focus mainly on non-boring strongly regular graphs, which we will do as well, and a lot of research is devoted to studying their spectral properties. The spectrum of a strongly regular graph is derivable from its parameters, which we record in the following theorem.
 \clearpage
 \begin{thm}\label{thm:srgspec}\cite[Theorem 9.1.2]{haemers} If $\Gamma$ is a non-boring srg$(n,k,\lambda, \mu)$, then it will have spectrum $k^1, r^f, s^g$ where $k>r>s$ and the latter two eigenvalues and their multiplicities are given by the following,
\begin{equation}\label{eq:srgspec}\begin{split}r = \frac{1}{2}\left[(\lambda - \mu) + \sqrt{(\lambda - \mu)^2+4(k-\mu)}\right],\\
f = \frac{1}{2}\left[(n-1)-\frac{2k + (n-1)(\lambda-\mu)}{\sqrt{(\lambda - \mu)^2+4(k-\mu)}}\right],\\
s = \frac{1}{2}\left[(\lambda - \mu) - \sqrt{(\lambda - \mu)^2+4(k-\mu)}\right],\\
g = \frac{1}{2}\left[(n-1)+\frac{2k + (n-1)(\lambda-\mu)}{\sqrt{(\lambda - \mu)^2+4(k-\mu)}}\right].\end{split}\end{equation}\\
From these it is also provable that $\lambda -\mu = r+s$ and $k-\mu = -rs$.\end{thm}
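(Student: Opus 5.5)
We will derive everything from the matrix identity $A^2 = kI + \lambda A + \mu(J - I - A)$, which restates the definition of an srg$(n,k,\lambda,\mu)$: entry $(i,j)$ of $A^2$ counts common neighbours of $v_i$ and $v_j$. This gives $k$ on the diagonal, $\lambda$ for adjacent pairs and $\mu$ for distinct non-adjacent pairs. We rewrite it as
\[A^2 - (\lambda-\mu)A - (k-\mu)I = \mu J.\]

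First, the all-ones vector $\mathbf{1}$ is an eigenvector with eigenvalue $k$, since $\Gamma$ is $k$-regular. Because $A$ is real symmetric, we may choose an orthogonal eigenbasis containing $\mathbf{1}$. Any other basis eigenvector $x$ with eigenvalue $\theta$ satisfies $Jx=0$, so $\theta^2 - (\lambda-\mu)\theta - (k-\mu) = 0$. The quadratic formula then shows $\theta \in \{r,s\}$ with $r,s$ exactly as in \eqref{eq:srgspec}, and Vieta's formulas give $r+s=\lambda-\mu$ and $rs=-(k-\mu)$ immediately. Non-boring should guarantee three distinct eigenvalues, so both $r$ and $s$ actually occur. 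We also need $k$ to have multiplicity $1$, which holds because a non-boring srg is connected: a disconnected srg is a disjoint union of complete graphs, which is boring. The Perron–Frobenius fact cited in the paper (the largest eigenvalue is at most $\Delta$, with the eigenvalue $k$ simple for connected regular graphs) then gives $k > r$. The inequality $r>s$ is immediate because the discriminant is positive; this follows from $k>\mu$, which holds since otherwise the graph would be complete multipartite, i.e.\ boring.

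The multiplicities come from two linear equations. Counting vertices gives $1 + f + g = n$. The trace condition $\operatorname{tr}A = 0$ gives $k + fr + gs = 0$. We solve this $2\times 2$ system:
\[f = \frac{-k - (n-1)s}{r-s}, \qquad g = \frac{k + (n-1)r}{r-s}.\]
We then substitute $r - s = \sqrt{(\lambda-\mu)^2 + 4(k-\mu)}$ and $r+s = \lambda - \mu$. Writing $s = \tfrac12[(r+s)-(r-s)]$ gives
\[-k-(n-1)s = \tfrac12\big[(n-1)(r-s) - (2k + (n-1)(\lambda-\mu))\big],\]
and dividing by $r-s$ yields the stated formula for $f$. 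The formula for $g$ follows either symmetrically or from $g = n-1-f$.

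The main obstacle is not the algebra but the justification that non-boring implies connectedness and that both $r$ and $s$ genuinely appear. Connectedness forces the multiplicity of $k$ to be $1$. If either of $f$ or $g$ were $0$, the graph would have at most two distinct eigenvalues and hence be boring. We would handle this by appealing to the cited characterization that regular graphs with at most two eigenvalues are disjoint unions of complete graphs or their complements.
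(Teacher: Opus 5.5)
Your argument is correct and is the standard proof of the cited Brouwer--Haemers theorem, which the paper quotes without proof: eigenvectors orthogonal to $\mathbf{1}$ have eigenvalues that are roots of $x^2-(\lambda-\mu)x-(k-\mu)$, and $f,g$ come from $1+f+g=n$ together with $\mathrm{tr}\,A=0$. The one soft spot is deriving $r>s$ from $k>\mu$, which only works if complete multipartite graphs such as $C_4=K_{2,2}$ (an srg$(4,2,0,2)$ with $k=\mu$ and three eigenvalues $2,0,-2$) count as boring; it is cleaner to note that the spectrum consists of $k$ (on $\mathbf{1}$) plus values from $\{r,s\}$, so having exactly three distinct eigenvalues already forces $k,r,s$ to be pairwise distinct with $f,g\ge 1$, which gives simplicity of $k$, $r\neq s$, and $k>r$ (via the max-degree bound) without any appeal to connectedness.
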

 One of the biggest questions on the topic of strongly regular graphs is: for which sets of parameters ($n,k,\lambda,\mu$) does a strongly regular graph exist? Recall from the introduction the statement of Conway's 99-graph problem: does there exist a graph on 99 vertices that is 14-regular such that every pair of adjacent vertices is contained in a unique triangle and every pair of non-adjacent vertices is contained in a unique quadrangle? We see now that this is equivalent to asking if there exists an srg$(99,14,1,2)$.\par
 We call a parameter set $(n,k,\lambda,\mu)$ feasible if it satisfies equation (\ref{eq:srg}) and its implied spectrum given by (\ref{eq:srgspec}) are integers with positive integer multiplicities. A large list of feasible parameter sets can be found on Andries Brouwer's website \cite{brouwer}. Brouwer's list categorizes each set of srg parameters into three groups (existence, non-existence, and unknown). Boring graphs are excluded from this list.\par
 \section{Strongly Regular Clique Regular Graphs}\label{sec:srgcrg}
 There is much overlap between clique regular graphs and strongly regular graphs. If an srg$(n,k,\lambda,\mu)$ $\Gamma$ is $\omega$-clique regular, since we know its spectrum is given by $k^1,r^f,s^g$ from Theorem \ref{thm:srgspec}, Remark \ref{rem:eigen} implies that the spectrum of $C_\omega(\Gamma)$ is given by \begin{equation}\label{eq:spec}\left(\frac{k}{\omega-1}+k-\omega \right)^1,\;\left(\frac{k}{\omega-1}+r-\omega\right)^f,\;  \left(\frac{k}{\omega-1}+s-\omega\right)^g, \;-\omega^{m-n} \end{equation} where $m=\frac{nk}{\omega(\omega-1)}$.  The following theorem will determine when the $\omega$-clique graph of an $\omega$-clique regular and strongly regular graph is also strongly regular. 
\begin{thm} \label{thm:srg}
    Suppose $\Gamma$ is $\omega$-clique regular and a non-boring \textnormal{srg}$(n,k,\lambda, \mu)$ with spectrum $k^1,r^f,s^g$ where $r>s$. Then the $\omega$-clique graph of $\Gamma$ is strongly regular if and only if $s = \frac{-k}{\omega-1}$ or $k = \omega(\omega -1)$. If so, $C_\omega(\Gamma)$ has parameters
    $$\textnormal{srg}\left( \frac{nk}{\omega(\omega -1)},\; \omega \left(\frac{k}{\omega-1}-1\right),\; \lambda^*,\; \mu^* \right)$$
    and if $\lambda = \omega - 2$, then $\lambda^* = \frac{k}{\omega -1}-2$ and $\mu^*=\mu +\omega-\frac{k}{\omega-1}$.
\end{thm}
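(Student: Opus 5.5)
The plan is to compute the spectrum of $C_\omega(\Gamma)$ from (\ref{eq:spec}) and then count its distinct eigenvalues. The tool is the characterization quoted before Theorem \ref{thm:srgspec}: a regular graph is strongly regular if and only if it has at most three distinct eigenvalues.

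\textbf{Regularity and connectivity.} Each vertex of $\Gamma$ lies in exactly $\frac{k}{\omega-1}$ cliques of order $\omega$ (as in the proof of Lemma \ref{lem:blockmat}). Two distinct $\omega$-cliques share at most one vertex, since otherwise an edge would lie in two of them. Hence a clique $c$ is adjacent to exactly $\omega\left(\frac{k}{\omega-1}-1\right)$ other cliques, so $C_\omega(\Gamma)$ is regular with this degree and has $m=\frac{nk}{\omega(\omega-1)}$ vertices. A non-boring srg is connected, and connectivity passes to $C_\omega(\Gamma)$: two cliques joined by a path in $\Gamma$ are joined by a chain of cliques containing consecutive edges of that path. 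Therefore the top eigenvalue $\frac{k}{\omega-1}+k-\omega$ is simple and differs from the other eigenvalues, which are $r'=\frac{k}{\omega-1}+r-\omega$, $s'=\frac{k}{\omega-1}+s-\omega$, and $-\omega$ with exponent $m-n$.

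\textbf{The ``if and only if''.} If $k=\omega(\omega-1)$, then $m=n$, the factor $(\lambda+\omega)^{m-n}$ vanishes, and only three eigenvalues remain, so $C_\omega(\Gamma)$ is strongly regular. If $s=-\frac{k}{\omega-1}$, then $s'=-\omega$ merges with the extra eigenvalue (or is cancelled by it when $m<n$), again leaving at most three distinct eigenvalues.

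For the converse, suppose neither condition holds.
\begin{itemize}
\item If $m<n$, then Corollary \ref{cor:lowbound} forces $s=-\frac{k}{\omega-1}$, a contradiction. So $m>n$, and $-\omega$ genuinely occurs in the spectrum of $C_\omega(\Gamma)$.
\item We have $s'\neq -\omega$ because $s\ne -\frac{k}{\omega-1}$.
\item We have $r'>s'\ge -\omega$ by Corollary \ref{cor:eigens}.
\item The multiplicities $f,g$ are positive, so $r'$ and $s'$ both occur.
\end{itemize}
Thus there are four distinct eigenvalues and $C_\omega(\Gamma)$ is not strongly regular. The step needing the most care is exactly this bookkeeping: checking that no accidental coincidence can occur, and handling the sign of $m-n$ via Corollary \ref{cor:lowbound}.

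\textbf{Parameters when $\lambda=\omega-2$.} In this case $\Gamma$ is an erg$(n,k,\omega-2)$, hence an rca by Theorem \ref{thm:rca1}. Theorem \ref{thm:rca2} then says $C_\omega(\Gamma)$ is an rca with clique number $\frac{k}{\omega-1}$, so it is edge regular with $\lambda^*=\frac{k}{\omega-1}-2$. For $\mu^*$ I would use $\lambda^*-\mu^*=r^*+s^*$ from Theorem \ref{thm:srgspec}, where $r^*,s^*$ are the two nontrivial eigenvalues, together with $r=(\lambda-\mu)-s$.
\begin{itemize}
\item Case $s=-\frac{k}{\omega-1}$: the nontrivial eigenvalues are $r'$ and $-\omega$, and $r=\omega-2-\mu+\frac{k}{\omega-1}$. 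This gives $r'-\omega=\frac{2k}{\omega-1}-\omega-2-\mu$, hence $\mu^*=\mu+\omega-\frac{k}{\omega-1}$.
\item Case $k=\omega(\omega-1)$: here $\frac{k}{\omega-1}=\omega$, so $r'=r$ and $s'=s$. Then $\lambda^*-\mu^*=\lambda-\mu$ with $\lambda^*=\omega-2=\lambda$, giving $\mu^*=\mu=\mu+\omega-\frac{k}{\omega-1}$, which agrees with the stated formula.
\end{itemize}
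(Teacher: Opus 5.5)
Your proof is correct. For the ``if and only if'' it follows the paper's argument: read the spectrum of $C_\omega(\Gamma)$ off (\ref{eq:spec}) and use Corollaries~\ref{cor:lowbound} and~\ref{cor:eigens} to rule out coincidences among the eigenvalues. You are more careful than the paper at two points. First, you dispose of the case $m<n$ explicitly, whereas the paper's ``one of the multiplicities equals $0$'' tacitly assumes $m-n\ge 0$. Second, your connectivity observation is what actually licenses ``regular with at most three distinct eigenvalues implies strongly regular''. That implication fails for disconnected graphs such as $2K_{3,3}$, and the paper quotes it without the connectivity hypothesis.

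The genuine difference is how you obtain $\lambda^*$. The paper cites \cite{guest} for $\omega=2,3$, and for $\omega\ge 4$ it argues directly. Two cliques meeting in $x$ have the other $\frac{k}{\omega-1}-2$ cliques through $x$ as common neighbors. Any further common neighbor $c_i$ with $x\notin c_i$ would meet them in distinct vertices $y_1,y_2\in c_i$ that share the common neighbor $x$ outside $c_i$, contradicting $\lambda=\omega-2$.

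You instead chain three steps:
\begin{itemize}
\item Theorem~\ref{thm:rca1} shows $\Gamma$ is an rca;
\item Theorem~\ref{thm:rca2} then makes $C_\omega(\Gamma)$ an rca of clique number $\frac{k}{\omega-1}$;
\item the forward direction of Theorem~\ref{thm:rca1} gives $\lambda^*=\frac{k}{\omega-1}-2$.
\end{itemize}
Your route is shorter and uniform in $\omega$, but it delegates the combinatorics to \cite[Theorem 1]{guest}. The paper's route is self-contained for $\omega\ge 4$ and shows exactly why no extra common neighbors arise.

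Finally, your case split for $\mu^*$ is unnecessary. When $s=-\frac{k}{\omega-1}$ one has $s'=-\omega$, so the paper's single computation $\mu^*=\lambda^*-r'-s'$ already covers both cases.
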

Note that $\lambda^*$ and $\mu^*$ are forced by the spectrum of $C_\omega(\Gamma)$ and can be always be derived from it, but if $\Gamma$ is a regular clique assembly (equivalent to $\lambda = \omega - 2$ by Theorem \ref{thm:rca1}) we can derive simpler formulas for the srg parameters.
\begin{proof}
    Since $C_\omega(\Gamma)$ is $\omega(\frac{k}{\omega-1}-1)$-regular, then it is sufficient to show that the adjacency matrix of  $C_\omega(\Gamma)$ has 3 or less distinct eigenvalues if and only if $s = \frac{-k}{\omega-1}$ or $k = \omega(\omega -1)$. The spectrum of $C_3(\Gamma)$ is given by (\ref{eq:spec}). If $s = \frac{-k}{\omega-1}$, then $\left(\frac{k}{\omega-1}+s-\omega\right) =-\omega$ and if $k=\omega(\omega-1)$ then $m=n$  implies the multiplicity of $-\omega$ is 0. So regardless, $C_\omega(\Gamma)$ has no more than 3 distinct eigenvalues.\par
    Conversely, assume $C_\omega(\Gamma)$ has 3 or less distinct eigenvalues. This implies that one of the multiplicities in the spectrum (\ref{eq:spec}) equals 0 or two of the eigenvalues are equal. If the first case, then $m-n$ must equal 0 since $1,f,g \neq 0$. Then $m=\frac{nk}{\omega(\omega-1)}=n$ implies $k=\omega(\omega-1)$. If two of the eigenvalues of are equal, we will show $s=\frac{-k}{\omega-1}$. If any of the latter 3 eigenvalues in (\ref{eq:spec}) are equal, it would imply that $k=r$, $k=s$ or $r=s$, a contradiction. So $-\omega$ is equal to one of the other eigenvalues implying $k, r$ or $s$ must be equal to $\frac{-k}{\omega -1}$. From Corollary \ref{cor:lowbound}, we have $k>r>s \geq \frac{-k}{\omega - 1}$ implying $s=\frac{-k}{\omega -1}$.\par
    Now assume that $C_\omega(\Gamma)$ is strongly regular and $\lambda = \omega -2$. For the case $\omega=2$ or $3$, the result $\lambda^*=\frac{k}{\omega-1}-2$ is shown in \cite[pg. 304]{guest}. So assume $\omega \geq 4$.\\
    Let $c_1$ and $c_2$ be adjacent vertices in $C_\omega(\Gamma)$ (that is,  $\omega$-cliques in $\Gamma$) and let $x \in \Gamma$ be the unique vertex that $c_1$ and $c_2$ share. Then $c_1$ and $c_2$ are commonly adjacent to all the other cliques containing $x$. So $\lambda^* \geq \frac{k}{\omega-1} -2$. Suppose for contradiction there exists another clique $c_i$ commonly adjacent to $c_1$ and $c_2$ such that $x$ is not in $c_i$. Let $y_1$ and $y_2$ be the unique common vertices of $c_1$ with $c_i$, and $c_2$ with $c_i$ respectively. Since $y_1$ and $y_2$ are both in clique $c_i$, they share $\omega -2$ common neighbors in that clique. And because $\lambda=\omega -2$, these are all the common neighbors that they share which contradicts the fact that $x$ is adjacent to both and $x$ is not in $c_i$.
    So $\lambda^* = \frac{k}{\omega -1} -2$.\\
    For $\mu^*$ recall that for any srg, $\lambda - \mu = r+s$. So we get
    \begin{align*}
        \lambda^* - \mu^* &= r^*+s^* \\
        \mu^* &= \lambda^* -r^*-s^* \\
        &= \left(\frac{k}{\omega-1}-2\right)-\left(\frac{k}{\omega -1} +r -\omega\right) -\left(\frac{k}{\omega -1} +s -\omega\right) \\
        &= 2\omega - (r+s)-2-\frac{k}{\omega-1} \\
        &= 2\omega - (\lambda - \mu)-2-\frac{k}{\omega-1}\textbf{}\\
        &= 2\omega - (\omega-2 - \mu)-2-\frac{k}{\omega-1}\\
        &= \mu +\omega- \frac{k}{\omega-1}.
    \end{align*}
\end{proof}
\clearpage
\begin{cor} \label{cor:kww}
    Suppose $\Gamma$ is $\omega$-clique regular and a non-boring \textnormal{srg}$(n,k,\lambda, \mu)$. Then $\Gamma$ and $C_\omega(\Gamma)$ are srgs with the same parameters if and only if $k=\omega(\omega-1)$.
\end{cor}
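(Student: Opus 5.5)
We prove both directions by comparing vertex counts and spectra, using Theorem \ref{thm:srg} and the spectrum (\ref{eq:spec}) of $C_\omega(\Gamma)$.

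\emph{Necessity.} Suppose $C_\omega(\Gamma)$ is an srg with the same parameters $(n,k,\lambda,\mu)$ as $\Gamma$. Then in particular the two graphs have the same number of vertices. By Theorem \ref{thm:srg}, $C_\omega(\Gamma)$ has $m=\frac{nk}{\omega(\omega-1)}$ vertices. Setting $m=n$ and cancelling $n>0$ gives $k=\omega(\omega-1)$ immediately.

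\emph{Sufficiency.} Assume $k=\omega(\omega-1)$. Theorem \ref{thm:srg} then says $C_\omega(\Gamma)$ is strongly regular. Its number of vertices is $\frac{n\omega(\omega-1)}{\omega(\omega-1)}=n$. Its degree is $\omega\left(\frac{k}{\omega-1}-1\right)=\omega(\omega-1)=k$.

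It remains to match $\lambda^*$ and $\mu^*$. Since $\frac{k}{\omega-1}=\omega$, every eigenvalue shift $\frac{k}{\omega-1}-\omega$ in (\ref{eq:spec}) is zero. Also $m-n=0$, so the eigenvalue $-\omega$ does not appear. Hence $C_\omega(\Gamma)$ has spectrum $k^1,r^f,s^g$, identical to that of $\Gamma$.

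For a non-boring srg, the parameters are determined by $n$, $k$, $r$ and $s$ through the identities in Theorem \ref{thm:srgspec}:
\[\mu=k+rs,\qquad \lambda=\mu+r+s.\]
Since $C_\omega(\Gamma)$ has the same $n$, $k$, $r$, $s$ as $\Gamma$, we get $\lambda^*=\lambda$ and $\mu^*=\mu$. We must check that $C_\omega(\Gamma)$ is itself non-boring so these identities apply, but it has three distinct eigenvalues $k>r>s$, so it is. This gives equal parameter sets.

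The only step needing care is this last one: the parameters must be recovered from the spectrum through identities valid for the new graph, rather than from the $\lambda=\omega-2$ formulas, which need not hold here.
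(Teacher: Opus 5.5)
Your proof is correct and follows the paper's argument. Necessity comes from the vertex count $m=\frac{nk}{\omega(\omega-1)}=n$, and sufficiency comes from the spectrum (\ref{eq:spec}) collapsing to $k^1,r^f,s^g$ when $\frac{k}{\omega-1}=\omega$ and $m-n=0$. You also spell out the step the paper only calls ``easily provable'' (same spectrum implies same parameters) via $\mu=k+rs$ and $\lambda=\mu+r+s$ from Theorem \ref{thm:srgspec}, which is the right justification.
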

\begin{proof}
    If $k=\omega(\omega-1)$, then it follows that $\Gamma$ and $C_\omega(\Gamma)$ have the same spectrum, and it is easily provable that strongly regular graphs with the same spectrum have the same parameters. Conversely if $\Gamma$ and $C_\omega(\Gamma)$ have the same parameters then $\frac{nk}{\omega(\omega -1)}=n$ which implies $k=\omega(\omega -1)$.
\end{proof}\par
Even if a strongly regular clique regular graph $\Gamma$ does not have a strongly regular clique graph, $C_\omega(\Gamma)$ would still have structural properties that we can derive from its spectrum. Since it would have exactly the four eigenvalues given in (\ref{eq:spec}) and would be regular, it would also have another property called walk-regularity \cite[Corollary 15.1.2]{haemers}. This means that for each vertex in the graph, the number of closed walks of a particular length is the same and is given by the sum of all eigenvalues of the graph raised to the power $\ell$ and divided by the number of vertices of the graph. In our case for example, the number of closed walks at any vertex $v\in C_\omega(\Gamma)$ of length $\ell\geq 1$ is given by the expression
\begin{equation} \label{eq:walk}
\frac{1}{m}\left(\left(\omega\left(\frac{k}{\omega-1}-1\right)\right)^\ell+f\left(\frac{k}{\omega -1} +r - \omega\right)^\ell+g\left(\frac{k}{\omega -1} +s - \omega\right)^\ell+(m-n)(-\omega)^\ell\right)\end{equation}
where $m=\frac{nk}{\omega(\omega-1)}$.\par
This gives a necessary condition on the existence of such a graph, since the expression (\ref{eq:walk}) must be a non-negative integer for every positive integer $\ell$. Simply put, if we can find a feasible parameter set $(n,k,\lambda,\mu)$ for which it is provable that any strongly regular graph with these parameters must be $\omega$-clique regular, but there exists a positive integer $\ell$ such that the expression (\ref{eq:walk}) is not a positive integer, it would show there exists no strongly regular graph with parameters $(n,k,\lambda,\mu)$.\par
If we were to make use of this necessary condition, there would then be two challenges: finding parameter sets for which any strongly regular graph must be clique regular, and finding an integer $\ell$ for which the expression (\ref{eq:walk}) is negative or not an integer. We deal with the first challenge in the next section \ref{sec:localsrgs}, in which we discuss a family of feasible parameter sets for which graphs must be clique regular. Unfortunately however for the second challenge, we can show that it is functionally impossible.\par
 The first two exponentials in expression \ref{eq:walk} are always positive and almost always larger in magnitude than the last two exponentials, so the whole expression is always positive for any $\ell\geq 1$ as long as $(n,k,\lambda,\mu)$ is a feasible parameter set. We won't show this claim rigorously since it is semi-intuitive and not very important for the remainder of our discussion. The important part is we can show that as long as $(n,k,\lambda,\mu)$ is a feasible parameter set, the more restrictive condition holds; expression (\ref{eq:walk}) is an integer for all $\ell \geq 1$.
\begin{prop}
    Suppose $(n,k,\lambda, \mu)$ is a feasible strongly regular parameter set with spectrum $k^1,r^f, s^g$ and $\omega \geq 2$ is such that $m=\frac{nk}{\omega(\omega -1)}$ and $\frac{k}{\omega -1}$ are integers. Then $m$ divides 
    \[\left(\omega\left(\frac{k}{\omega-1}-1\right)\right)^\ell+f\left(\frac{k}{\omega -1} +r - \omega\right)^\ell+g\left(\frac{k}{\omega -1} +s - \omega\right)^\ell+(m-n)(-\omega)^\ell\]
    for all non-negative integers $\ell$.
\end{prop}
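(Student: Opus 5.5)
The plan is to move the identity shift into the eigenvalues and reduce the claim to an integrality statement about traces of powers in the Bose--Mesner algebra of the parameter set. Write $c=\frac{k}{\omega-1}$, which is an integer by hypothesis. Then $m=\frac{nc}{\omega}$, and the four bases in the expression are $\theta-\omega$, where $\theta$ runs over the multiset
\[
\{(c+k)^{1},\;(c+r)^{f},\;(c+s)^{g},\;0^{m-n}\}.
\]
This is the same shift used in Theorem \ref{thm:eigen}, where $\lambda+\omega$ appeared. Expanding $(\theta-\omega)^\ell$ binomially, the expression becomes
\[
S_\ell=\sum_{j=0}^{\ell}\binom{\ell}{j}(-\omega)^{\ell-j}Q_j,\qquad Q_j=(c+k)^j+f(c+r)^j+g(c+s)^j+(m-n)\cdot 0^j .
\]
Here $Q_0=1+f+g+(m-n)=m$, since $1+f+g=n$. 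So it suffices to show $m\mid Q_j$ for every $j\ge 1$. For $j\ge1$ the zero eigenvalues contribute nothing, and a second binomial expansion gives
\[
Q_j=\sum_{i=0}^{j}\binom{j}{i}c^{\,j-i}P_i,\qquad P_i=k^i+fr^i+gs^i .
\]

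The key step is to show $P_i=n\alpha_i$ with $\alpha_i\in\ZZ$, $\alpha_0=1$, and $k\mid\alpha_i$ for $i\ge1$. This must be done without assuming a graph exists, using only feasibility. Work formally in the three-dimensional algebra spanned by $I$, $A$, $B=J-I-A$, with the multiplication rules dictated by the parameters: $A^2=kI+\lambda A+\mu B$, together with $AB$ and $B^2$ computed from $AJ=kJ$ via relation (\ref{eq:srg}). Write $A^i=\alpha_iI+\beta_iA+\gamma_iB$ with integer coefficients. The linear functional sending $I\mapsto n$ and $A,B\mapsto 0$ agrees with evaluation on the eigenvalues $k^1,r^f,s^g$. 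This holds because the three idempotents have traces $1,f,g$, and $\operatorname{tr}A=\operatorname{tr}B=0$ follows from $k+fr+gs=0$ and $(n-k-1)+f(-1-r)+g(-1-s)=0$, both consequences of the formulas in Theorem \ref{thm:srgspec}. Hence $P_i=n\alpha_i$. Moreover, multiplying by $A$, the coefficient of $I$ in $A\cdot(xI+yA+zB)$ is $ky$, since only $A\cdot A$ produces $I$. Therefore $\alpha_{i}=k\beta_{i-1}$ for $i\ge1$.

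With this in hand, consider $Q_j/m=\frac{\omega}{c}\sum_i\binom{j}{i}c^{j-i}\alpha_i$ for $j\ge 1$. Each term with $i<j$ carries a factor $c^{j-i}$ with $j-i\ge1$, so it is divisible by $c$. The term $i=j$ is $\alpha_j=k\beta_{j-1}=c(\omega-1)\beta_{j-1}$, which is also divisible by $c$. So the sum is $c$ times an integer, and $Q_j/m$ is $\omega$ times an integer. Thus $m\mid Q_j$ for all $j\ge0$, and hence $m\mid S_\ell$.

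The main obstacle I expect is the step where the trace functional is justified purely from feasibility rather than from an actual adjacency matrix. One has to check that the formal algebra really has the stated structure constants and that the eigenvalue evaluation equals $n$ times the $I$-coefficient. I would verify this directly by checking it on the basis $I,A,B$ using the multiplicity formulas (\ref{eq:srgspec}).
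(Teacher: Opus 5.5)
Your proof is correct, and it takes a genuinely different route from the paper's. The paper never expands around the shift. Instead it runs three nested inductions modulo $m$, each time using the inductive hypothesis to eliminate one eigenvalue. This reduces first to $k\tilde{k}^\ell+fr\tilde{r}^\ell+gs\tilde{s}^\ell$, then to $k(k-r)\tilde{k}^\ell+gs(s-r)\tilde{s}^\ell$, and finally to $k(k-r)(k-s)\tilde{k}^\ell=nk\mu\tilde{k}^\ell=m\omega(\omega-1)\mu\tilde{k}^\ell$. The base cases are the explicit identities $1+f+g=n$, $k+fr+gs=0$ and $k(k-r)+gs(s-r)=nk$.

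You instead write the expression as $\sum_j\binom{\ell}{j}(-\omega)^{\ell-j}Q_j$ and prove the stronger statement that $m$ divides every power sum $Q_j$ of the eigenvalues of $A_C+\omega I$. In an actual graph, Lemma \ref{lem:blockmat} gives $Q_j=\textnormal{tr}\bigl((R^\top R)^j\bigr)=\textnormal{tr}\bigl((A+cI)^j\bigr)$ for $j\ge 1$. So your argument explains where $m$ comes from: traces of powers of $A$ are $n$ times integers divisible by $k$, and $nc=\omega m$. It also needs less. You use only integrality of $n,k,\lambda,\mu,c,m$, relation (\ref{eq:srg}) and the multiplicity formulas (\ref{eq:srgspec}). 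The paper's congruence steps (for instance, multiplying $k\tilde{k}^\ell+fr\tilde{r}^\ell+gs\tilde{s}^\ell\equiv 0$ by $r$) use that $r,s,f,g$ are integers. In exchange, the paper's proof is completely explicit and needs no auxiliary algebra.

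The step you flagged is easy to close, and you can avoid the formal algebra entirely, so associativity never comes up. Define integers by $\alpha_0=1$, $\beta_0=\gamma_0=0$ and
$\alpha_{i+1}=k\beta_i$, $\beta_{i+1}=\alpha_i+\lambda\beta_i+(k-1-\lambda)\gamma_i$, $\gamma_{i+1}=\mu\beta_i+(k-\mu)\gamma_i$.
Then check by induction that $\theta^i=\alpha_i+\beta_i\theta+\gamma_i b_\theta$ for $(\theta,b_\theta)\in\{(k,n-k-1),(r,-1-r),(s,-1-s)\}$. The inductive step needs only $\theta^2=k+\lambda\theta+\mu b_\theta$ and $\theta b_\theta=(k-1-\lambda)\theta+(k-\mu)b_\theta$. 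For $\theta=k$ both are exactly (\ref{eq:srg}); for $\theta=r,s$ both reduce to $\theta^2-(\lambda-\mu)\theta-(k-\mu)=0$. Summing with weights $1,f,g$ and using your two trace identities then gives $P_i=n\alpha_i$ with $\alpha_i=k\beta_{i-1}$, as you claimed.
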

Note that we only need to show this for positive integers $\ell$, but since we will use stacked inductions on $\ell$, having the base case $\ell=0$ makes for an easier proof.
\begin{proof}
    For brevity, define $\tilde{k}=\omega\left(\frac{k}{\omega -1} -1\right)$, $\tilde{r}=\left(\frac{k}{\omega -1} +r - \omega\right)$ and $\tilde{s}=\left(\frac{k}{\omega -1} +s - \omega\right)$, and let ``$\equiv$'' represent equivalence modulo $m$. We will show the theorem by induction on $\ell \geq 0$. The algebra in each base case follows from equations (\ref{eq:srg}) and (\ref{eq:srgspec}). For the first base case $\ell=0$ we get $\tilde{k}^0 + f\tilde{r}^0 + g\tilde{s}^0 + (m-n)(-\omega)^0 = 1+f+g+m-n=m$ since $1+f+g=n$. Assuming $\tilde{k}^\ell+f\tilde{r}^\ell+g\tilde{s}^\ell-n(-\omega)^\ell \equiv 0$ for some $\ell \geq 0$ gives \begin{align*}
        &\tilde{k}^{\ell +1} + f\tilde{r}^{\ell +1} + g\tilde{s}^{\ell +1} -n(-\omega)^{\ell +1}\\  &\equiv \tilde{k}^\ell\left(\omega\left(\frac{k}{\omega -1} -1\right)\right) + f\tilde{r}^\ell\left(\frac{k}{\omega -1} +r - \omega\right) + g\tilde{s}^\ell\left(\frac{k}{\omega -1} +s - \omega\right) + \omega(n(-\omega)^\ell)\\  &\equiv 
        -\omega(\tilde{k}^\ell+f\tilde{r}^\ell+g\tilde{s}^\ell-n(-\omega)^\ell)+ \frac{k}{\omega -1} (\tilde{k}^\ell+f\tilde{r}^\ell+g\tilde{s}^\ell)+k\tilde{k}^\ell+fr\tilde{r}^\ell+gs\tilde{s}^\ell\\ &\equiv 
        \frac{k}{\omega -1}(n(-\omega)^\ell)+k\tilde{k}^\ell+fr\tilde{r}^\ell+gs\tilde{s}^\ell\\ &\equiv 
        \omega m(-\omega)^\ell+k\tilde{k}^\ell+fr\tilde{r}^\ell+gs\tilde{s}^\ell\\ &\equiv 
        k\tilde{k}^\ell+fr\tilde{r}^\ell+gs\tilde{s}^\ell
    \end{align*}
    So it suffices to show that $m$ divides $k\tilde{k}^\ell+fr\tilde{r}^\ell+gs\tilde{s}^\ell$ which we will do by induction on $\ell \geq 0$. The base case $\ell=0$ is $k+fr+gs=0$. Assume $k\tilde{k}^\ell+fr\tilde{r}^\ell+gs\tilde{s}^\ell \equiv 0$ for some $\ell \geq 0$ and we get 
    \begin{align*}
        & k\tilde{k}^{\ell +1}+fr\tilde{r}^{\ell +1}+gs\tilde{s}^{\ell +1} \\ & \equiv k\tilde{k}^\ell\left(\omega\left(\frac{k}{\omega -1} -1\right)\right)+fr\tilde{r}^\ell\left(\frac{k}{\omega -1} +r - \omega\right)+gs\tilde{s}^\ell\left(\frac{k}{\omega -1} +s - \omega\right) \\ & \equiv 
        \left(\frac{k}{\omega-1}-\omega\right)(k\tilde{k}^\ell+fr\tilde{r}^\ell+gs\tilde{s}^\ell)+k^2\tilde{k}^\ell+fr^2\tilde{r}^\ell+gs^2\tilde{s}^\ell \\ & \equiv
        k^2\tilde{k}^\ell+fr^2\tilde{r}^\ell+gs^2\tilde{s}^\ell \\ & \equiv
        k^2\tilde{k}^\ell-r(k\tilde{k}^\ell+gs\tilde{s}^\ell)+gs^2\tilde{s}^\ell \\ & \equiv 
        k(k-r)\tilde{k}^\ell+gs(s-r)\tilde{s}^\ell
    \end{align*}
    So it suffices to show that $m$ divides $k(k-r)\tilde{k}^\ell+gs(s-r)\tilde{s}^\ell$ which we will again do by induction on $\ell \geq 0$. The base case is $k(k-r)+gs(s-r)=nk = m\omega(\omega-1)$. Assume $k(k-r)\tilde{k}^\ell+gs(s-r)\tilde{s}^\ell \equiv 0$ for some $\ell \geq 0$ and we get 
    \begin{align*}
        & k(k-r)\tilde{k}^{\ell +1}+gs(s-r)\tilde{s}^{\ell +1} \\ & \equiv
        k(k-r)\tilde{k}^\ell\left(\omega\left(\frac{k}{\omega -1} -1\right)\right)+gs(s-r)\tilde{s}^\ell\left(\frac{k}{\omega -1} +s - \omega\right) \\ & \equiv
        \left(\frac{k}{\omega -1}-\omega\right)(k(k-r)\tilde{k}^\ell+gs(s-r)\tilde{s}^\ell)+k^2(k-r)\tilde{k}^\ell+gs^2(s-r)\tilde{s}^\ell \\ & \equiv
        k^2(k-r)\tilde{k}^\ell+gs^2(s-r)\tilde{s}^\ell \\ & \equiv
        k^2(k-r)\tilde{k}^\ell-s(k(k-r)\tilde{k}^\ell) \\ & \equiv
        k(k-r)(k-s)\tilde{k}^\ell \\ &\equiv nk\mu\tilde{k}^\ell \hspace{8cm}\mbox{from equations (\ref{eq:srg}) and (\ref{eq:srgspec})}\\ & \equiv m\omega(\omega-1)\mu\tilde{k}^\ell \\ & \equiv 0
    \end{align*}
\end{proof}
While this shows we will not be able to eliminate any feasible parameter sets from possibly existing just by finding an integer $\ell$, we show in the next section that the fact about walk-regularity can be used to find other restrictive conditions on the existence of a certain family of strongly regular graphs.
\section{Locally Linear Strongly Regular Graphs}\label{sec:localsrgs}
An srg$(n,k,\lambda, \mu)$ is locally linear (3-clique regular) if and only if $\lambda=1$. There are many other families of strongly regular graphs that have the clique regular property which we will discuss in chapter 6, but the locally linear strongly regular graphs are interesting because they are the only ones for which the clique regular property is forced by their parameters. It follows from Theorem \ref{thm:rca1} that any srg$(n,k,1,\mu)$ is not only 3-clique regular but also a regular clique assembly.\par
As discussed in the introduction, the existence problem for these graphs was the main motivation for this investigation, and so in this section we explore multiple different approaches to try and find a necessary condition on the existence of these graphs. \par
Using Theorem \ref{thm:srg}, we can enumerate all non-boring locally linear strongly regular graphs that have strongly regular 3-clique graphs. It turns out there are only three.
 \begin{thm}\label{thm:3srgs}
     The only non-boring strongly regular locally linear graphs that have strongly regular $3$-clique graphs are the unique graphs with the parameters $\textnormal{srg}(9,4,1,2)$, $\textnormal{srg}(15,6,1,3)$, and $\textnormal{srg}(27, 10, 1, 5)$.
 \end{thm}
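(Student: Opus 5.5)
The plan is to specialize Theorem \ref{thm:srg} to $\omega=3$ and $\lambda=1$. By that theorem, $C_3(\Gamma)$ is strongly regular if and only if $k=\omega(\omega-1)=6$ or $s=-k/2$. In each case I will pin down the feasible parameter sets $(n,k,1,\mu)$ using three tools: the parameter relation (\ref{eq:srg}), the spectral formulas of Theorem \ref{thm:srgspec}, and Lemma \ref{lem:nonadj}. Lemma \ref{lem:nonadj} applies because any srg$(n,k,1,\mu)$ is an rca$(n,k,3)$ by Theorem \ref{thm:rca1}, so it gives $\mu\le k/2$. After that I expect only a short list of candidates, to be either excluded or identified with the known unique graphs.

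\emph{Case $k=6$.} Here (\ref{eq:srg}) reads $(n-7)\mu=24$. Since $\mu\le 3$, the only options are $\mu\in\{1,2,3\}$ with $n\in\{31,19,15\}$. For $\mu=1$ the eigenvalues $r,s$ are the roots of $x^2-5$, and for $\mu=2$ they are the roots of $x^2+x-4$. Both are irrational, so a non-boring graph would have to be a conference graph, which needs $k=(n-1)/2$; that fails in both cases. This leaves $(15,6,1,3)$.

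\emph{Case $s=-k/2$.} The value $s$ is a root of $x^2-(\lambda-\mu)x-(k-\mu)=x^2-(1-\mu)x-(k-\mu)$. Substituting $x=-k/2$ and clearing denominators gives
\[
k^2-2k-2\mu k+4\mu=(k-2)(k-2\mu)=0 .
\]
If $k=2$ the graph is a disjoint union of triangles, which is boring. So $k=2\mu$, and then $r=(1-\mu)-s=1$. Relation (\ref{eq:srg}) becomes $(n-2\mu-1)\mu=2\mu(2\mu-2)$, so $n=6\mu-3$. For integrality of multiplicities I use $f+g=n-1$ and $k+fr+gs=0$, which here read $f+g=6\mu-4$ and $2\mu+f-\mu g=0$. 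Eliminating $f$ gives $g(\mu+1)=8\mu-4$, so $\mu+1$ divides $12$ and $\mu\in\{1,2,3,5,11\}$. Checking each value:
\begin{itemize}
\item $\mu=1$ gives $k=2$, which is boring.
\item $\mu=2$ gives $(9,4,1,2)$.
\item $\mu=3$ gives $(15,6,1,3)$ again.
\item $\mu=5$ gives $(27,10,1,5)$.
\item $\mu=11$ gives $(63,22,1,11)$ with $g=7$.
\end{itemize}

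The main obstacle is ruling out $(63,22,1,11)$, since it passes the integrality test and also satisfies $\mu\le k/2$. I would first try the absolute bound $n\le g(g+3)/2$. Here $g(g+3)/2=35<63$, so no such graph exists. If that check misbehaves, a fallback is to cite Brouwer's table \cite{brouwer}.

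Finally, I would cite the known uniqueness of the three surviving graphs: the $3\times3$ rook's graph, the complement of $T(6)$, and the complement of the Schläfli graph. This completes the classification.
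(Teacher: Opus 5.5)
Your proposal is correct and follows essentially the same route as the paper: you specialize Theorem~\ref{thm:srg} to the two cases $k=6$ or $s=-k/2$, then use $(n-k-1)\mu=k(k-\lambda-1)$ and integrality of the multiplicities to reduce to $\mu\in\{1,2,3,5,11\}$, and finally eliminate $(63,22,1,11)$ with the absolute bound. The differences are minor: for $k=6$ you prune $\mu$ using Lemma~\ref{lem:nonadj} ($\mu\le k/2$) and the conference-graph criterion rather than checking integrality of $f$ for every divisor of $24$, and you explicitly handle the root $k=2$ of $(k-2)(k-2\mu)=0$, which the paper's step ``implies $\mu=k/2$'' passes over.
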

 \begin{proof}
     These graphs all have strongly regular $3$-clique graphs following from Theorem \ref{thm:srg}. So let $\Gamma$ be an srg$(n,k,1,\mu)$ such that $C_3(\Gamma)$ is strongly regular and we will show that $\Gamma$ is the unique strongly regular graph on one of the enumerated parameters. From Theorem \ref{thm:srg} either $k=6$ or $s=-\frac{k}{2}$ so first assume $k=6$. Then from $(n-k-1)\mu=k(k-\lambda-1)$ it follows that $n=\frac{24}{\mu}+7$. So since $\mu$ divides $24$, $\mu \in \{1, 2, 3, 4, 6, 8, 12, 24\}$. From (\ref{eq:srgspec}), the multiplicity of the largest eigenvalue of $\Gamma$ is \[f = \frac{1}{2}\left[\left(\frac{24}{\mu}+6\right)-\frac{12 + (\frac{24}{\mu}+6)(1-\mu)}{\sqrt{(1 - \mu)^2+4(6-\mu)}}\right],\] from which it follows that $f$ is only an integer when $\mu =3$. This shows that $\Gamma$ is the unique srg$(15,6,1,3)$.\par
     Now assume that $s=-\frac{k}{2}$. From (\ref{eq:srgspec}) we get
     \[s = \frac{1}{2}\left[(1 - \mu) - \sqrt{(1 - \mu)^2+4(k-\mu)}\right] = -\frac{k}{2},\] which implies that $\mu = \frac{k}{2}$. So from $(n-k-1)\mu=k(k-\lambda-1)$ we get that $\Gamma$ is an srg$(3(k-1), k, 1, \frac{k}{2})$ for some $k$. So the multiplicity of the smallest eigenvalue of $\Gamma$ is
     \[g = \frac{8(k-1)}{k+2}.\] Since $0<g<8$ when $k >1$,  $g \in \{1, \ldots, 7\}$ since the multiplicity must be an integer. Solving for $k$ gives $k\in \{2, 4, 6, 10, 22\}$ since $k$ must also be an integer. If $k=2$ this implies that $\Gamma \cong K_3$, a boring graph, and $k=22$ gives the parameters of $\Gamma$ as srg$(63, 22, 1, 11)$ which violate the absolute bound \cite{brouwer}. The remaining  $k\in \{4,6,10\}$ give the result.
 \end{proof}
Even if the 3-clique graph of an srg$(n,k,1,\mu)$ is not strongly regular, it still has some highly restrictive structural properties. For the remainder of this section, $\Gamma$ will represent a non-boring srg$(n,k,1,\mu)$ that is not one of the three enumerated in Theorem \ref{thm:3srgs}. From Theorem \ref{thm:srgspec}, it will have spectrum $k^1,r^f,s^g$ where these values are given by equations (\ref{eq:srgspec}). So $C_3(\Gamma)$ will be a connected $\rca(m,d,\omega)$ from Theorem \ref{thm:rca2} and will have spectrum $d^1,\r^f,\s^g,-3^{(m-n)}$ where 
\[m=\frac{nk}{6}, \quad \quad
    d = 3\left(\frac{k}{2}-1\right), \quad \quad
    \omega = \frac{k}{2},\]
\[\r=\frac{k}{2}+r-3, \quad \text{ and } \quad \s=\frac{k}{2}+s-3,\] from equation (\ref{eq:spec}).\par As we discussed in section \ref{sec:srgcrg}, $C_3(\Gamma)$ is walk-regular which means that the number of closed walks of length $\ell$ through some vertex in the graph is given by the expression 
\[\theta_\ell:=\frac{1}{m}\left(d^\ell+f\r^\ell+g\s^\ell+(m-n)(-3)^\ell \right)\]
and is independent of the vertex chosen. Using this idea, we can easily calculate the number of triangles and quadrangles through a given point respectively as
\[\Delta:=\frac{\theta_3}{2} \quad \text{ and } \quad\Xi :=\frac{\theta_4-2d^2+d}{2}.\]\par
So now fix some vertex $v$ in $C_3(\Gamma)$. The neighborhood of $v$ is fully understood since $C_3(\Gamma)$ is an $\rca(m,d,\omega)$, so we will study the vertices not adjacent to $v$. Define $T_i$ as the set of vertices not adjacent to $v$ that have exactly $i$ neighbors in common with $v$, and $\tau_i=|T_i|$ as the number of these vertices. Formally, for a vertex $u\in C_3(\Gamma)$, $u\in T_i$ if and only if $u\not \in N(v)$ and $|N(v)\cap N(u)|=i$. Following from Lemma \ref{lem:nonadj}, $\tau_i=0$ if $i>\frac{d}{\omega -1}=3$ so we need only to consider $T_0, T_1, T_2, T_3$. This setup can be visualized in Figure \ref{fig:hangingv}.\par
\begin{figure}[htbp]
    \centering
    \includegraphics[width=0.6\linewidth]{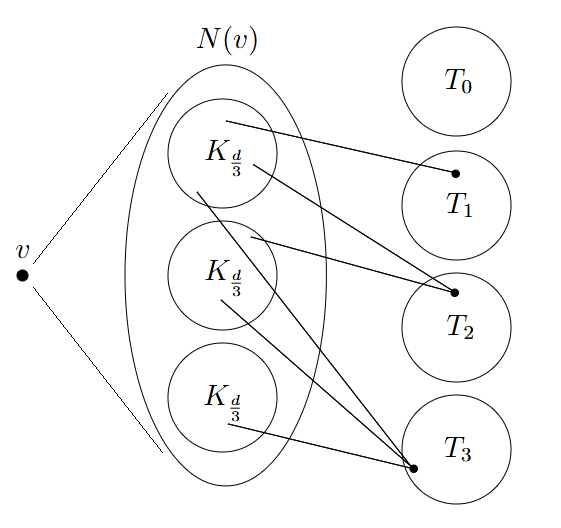}
    \caption[The neighborhood of vertex $v$ and the $T_i$ sets]{The neighborhood of $v$ and the $T_i$ sets}
    \label{fig:hangingv}
\end{figure}
The authors of \cite{dam} used a counting argument to derive a general system of linear equations that walk-regular graphs must satisfy, and for graph $C_3(\Gamma)$ this system is:
\begin{equation}\label{eq:system}\begin{split}
    \tau_0 + \tau_1 + \tau_2 + \tau_3 &= m-d-1, \\
    \tau_1 + 2\tau_2 +3\tau_3 &= d(d-1)-2\Delta, \\
    \tau_2 + 3\tau_3 &= \Xi - \binom{d/3-1}{2}d.
\end{split}\end{equation}
For a graph with the properties of $C_3(\Gamma)$ to exist, there must be at least one positive integer solution to system (\ref{eq:system}). We will expand this system by introducing 9 new positive integer variables and 9 new linear equations that they must satisfy. For $i,j=0,1,2,3$ we define $\rho_{i,j}=\rho_{j,i}$ as the number of edges with one endpoint in $T_i$ and the other in $T_j$. Note that $\rho_{i,i}$ does not double count edges, and is exactly the number of edges in the induced subgraph on vertices $T_i$. This implies there are 10 $\rho$ variables, but we can easily show that $\rho_{0,3}=0$.
\begin{prop}
    There are no edges between the sets $T_0$ and $T_3$.
\end{prop}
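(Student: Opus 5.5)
We argue by contradiction: suppose $u\in T_3$ and $w\in T_0$ are adjacent in $C_3(\Gamma)$, and show that $w$ is then forced to have a common neighbor with $v$. The only tool needed is the local structure of $C_3(\Gamma)$ as an $\rca(m,d,\omega)$ with $d/(\omega-1)=3$. By \cite[Proposition 1]{guest}, as used in the proof of Lemma \ref{lem:nonadj}, every neighborhood is a disjoint union of three copies of $K_{\omega-1}$. Equivalently, every vertex lies in exactly three $\omega$-cliques, and these cliques meet pairwise only in that vertex.

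First, recall what the proof of Lemma \ref{lem:nonadj} gives. For a vertex non-adjacent to $v$, its common neighbors with $v$ inject into the three $\omega$-cliques through $v$. Applied to $u\in T_3$, this means $u$ has exactly one common neighbor with $v$ in each of the three $\omega$-cliques $Q_1,Q_2,Q_3$ through $v$. Call these neighbors $x_1,x_2,x_3$, with $x_i\in Q_i\setminus\{v\}$.

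Second, run the same injection argument with the roles swapped, now using the three $\omega$-cliques $P_1,P_2,P_3$ through $u$. If two of the $x_i$ lay in the same $P_j$, they would be adjacent. Together with $v$ and $u$ this would put the edge $x_ix_j$ in two distinct $\omega$-cliques, one containing $v$ and one containing $u$. These cliques really are distinct, since $u\notin N(v)$. This is a contradiction, so after relabeling $x_i\in P_i$. Thus each of the three cliques through $u$ contains a vertex of $N(v)$.

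Third, use the adjacency $uw$. The edge $uw$ lies in a unique $\omega$-clique, which must be one of the $P_i$, say $P_1$. Then $w$ and $x_1$ both lie in $P_1$. If $w=x_1$, then $w\in N(v)$, which contradicts $w\in T_0$. Otherwise $w$ is adjacent to $x_1\in N(v)$, so $x_1\in N(w)\cap N(v)$ and $w\notin T_0$, again a contradiction. Hence $\rho_{0,3}=0$.

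The only delicate step is the second one, the swapped injection, which shows that the three common neighbors are spread across all three cliques at $u$ rather than just across the cliques at $v$. It is the same argument as in Lemma \ref{lem:nonadj}, applied symmetrically, so I expect no real obstacle. One must only be careful that $\omega\ge3$, which holds since $\omega=k/2$ and $\Gamma$ is non-boring.
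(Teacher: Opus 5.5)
Your proof is correct. It is the paper's argument transported through the duality $C_{k/2}(C_3(\Gamma))\cong\Gamma$ of Theorem~\ref{thm:rca2}, but carried out intrinsically in $C_3(\Gamma)$ rather than in $\Gamma$.

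In your notation, the paper treats $v$, $u\in T_3$ and $w\in T_0$ as triangles of $\Gamma$. Three common neighbors of $u$ and $v$ means every vertex of triangle $u$ is adjacent to some vertex of triangle $v$. The edge $uw$ means triangles $u$ and $w$ share a vertex. Hence some vertex of $w$ is adjacent to a vertex of $v$, and the triangle on that edge is a common neighbor of $w$ and $v$.

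Under the duality the steps line up as follows:
\begin{itemize}
\item your cliques $P_1,P_2,P_3$ are the three vertices of triangle $u$;
\item your two injections say that the three edges between triangles $u$ and $v$ form a matching;
\item ``$uw$ lies in some $P_i$'' is ``triangles $u$ and $w$ share a vertex.''
\end{itemize}

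Your version buys two things. First, the matching step is actually proved. The paper simply asserts that each vertex of triangle $u$ has a neighbor in triangle $v$, without noting that this uses $\lambda=1$ to rule out one vertex being adjacent to two vertices of $v$. Second, you use only that $C_3(\Gamma)$ is an rca in which every vertex lies in exactly three maximum cliques. The paper's version is shorter because the triangle picture makes that step look immediate.

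One small correction: non-boring alone does not give $\omega=k/2\ge 3$, since srg$(9,4,1,2)$ is non-boring with $k=4$. What gives it is the section's standing exclusion of the three graphs of Theorem~\ref{thm:3srgs}. In any case your argument works verbatim for $\omega=2$, where both injections are trivial.
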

\begin{proof}
    Suppose to the contrary that $xy$ is an edge in $C_3(\Gamma)$ with $x\in T_0$ and $y\in T_3$, and consider $x$,$y$ and $v$ as triangles in $\Gamma$. Each vertex of triangle $y$ must share an edge with a vertex of $v$ since in $C_3(\Gamma)$ they have three common neighbors. And since $x$ is adjacent to $y$ in $C_3(\Gamma)$, triangles $x$ and $y$ must share some vertex in $\Gamma$. This means at least one vertex in triangle $x$ is adjacent to a vertex in triangle $v$, but this implies $x$ and $v$ share a common neighbor in $C_3(\Gamma)$, contradicting $x\in T_0$.
\end{proof}
We will refer to (\ref{eq:system}) along with the equations we derive below as the $\tau, \rho$ system. The first four equations in the $\tau, \rho$ system involve counting the total edges coming of off the vertices in the sets $T_0,T_1,T_2,T_3$. 
\begin{prop}\label{edgecount}
    We have
    \[(d-i)\tau_i=\rho_{i,i}+\sum_{j=0}^3\rho_{i,j}\]
    for each $i=0,1,2,3$.
\end{prop}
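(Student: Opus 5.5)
We prove the identity by double counting the edge-endpoints that leave the vertices of $T_i$. Since $C_3(\Gamma)$ is an $\rca(m,d,\omega)$ by Theorem \ref{thm:rca2}, it is $d$-regular, so the vertices of $T_i$ have total degree $d\tau_i$. We split this sum according to where the other endpoint of each edge lies. There are three possible locations: the vertex $v$ itself, the neighborhood $N(v)$, or one of the sets $T_j$ for $j=0,1,2,3$. These locations partition the vertex set of $C_3(\Gamma)$, since every vertex other than $v$ is either adjacent to $v$ or lies in exactly one $T_j$. The bound $\tau_j=0$ for $j>3$ follows from Lemma \ref{lem:nonadj}.

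First, no vertex of $T_i$ is adjacent to $v$, by definition of $T_i$. Second, each $u\in T_i$ satisfies $|N(u)\cap N(v)|=i$, so $u$ sends exactly $i$ edges into $N(v)$. This accounts for $i\tau_i$ of the endpoint incidences. The remaining $(d-i)\tau_i$ incidences are edges from $T_i$ into $\bigcup_j T_j$.

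For $j\neq i$, each edge between $T_i$ and $T_j$ has exactly one endpoint in $T_i$. It therefore contributes once, and these edges give $\rho_{i,j}$ incidences in total. For $j=i$, each edge inside $T_i$ has both endpoints in $T_i$, so it is counted twice in the degree sum. These edges give $2\rho_{i,i}$ incidences. The term $\rho_{i,i}$ appearing inside $\sum_{j=0}^3\rho_{i,j}$ supplies one copy, and the extra $\rho_{i,i}$ outside the sum supplies the other. Adding up, we obtain $(d-i)\tau_i=\rho_{i,i}+\sum_{j=0}^3\rho_{i,j}$. Note that $\rho_{0,3}=0$ by the preceding proposition, but the formula does not require this.

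There is no real obstacle here. The only point needing care is the bookkeeping convention that $\rho_{i,i}$ counts each internal edge once, which is exactly why $\rho_{i,i}$ appears a second time outside the sum.
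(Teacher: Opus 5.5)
Your proof is correct and follows the same route as the paper. Each vertex of $T_i$ has $d$ edges, $i$ of which go into $N(v)$; the remaining $(d-i)\tau_i$ edge-endpoints are distributed among the $\rho_{i,j}$, and edges inside $T_i$ are counted twice, which is why $\rho_{i,i}$ appears a second time outside the sum. Your version is a more carefully written form of the paper's brief double-counting argument.
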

\begin{proof}
    A vertex not adjacent to $v$ that has $i$ neighbors in common with $v$ has $i$ of its edges going into $N(v)$ and so the remaining $d-i$ edges are incident with other vertices not adjacent to $v$. So these total edges are counted by the sum of $\rho_{i,j}$ but the edges between two vertices each sharing exactly $i$ common neighbors with $v$ is double counted.
\end{proof}
These four equations are clearly independent and we can verify with SAGE that so far, the whole $\tau,\rho$ system is also independent. The next four equations arise from counting the number of 3-walks between $v$ and vertices not adjacent to $v$.
\begin{prop}\label{prop:3walk}
    We have
    \[\left ( 9\mu +i\left(\frac{d}{3}-5 -\mu\right) \right)\tau_i=i\rho_{i,i}+\sum_{j=1}^3j\rho_{i,j}\]
    for each $i=0,1,2,3$.
\end{prop}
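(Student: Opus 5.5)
The plan is to count, in two different ways, the total number of closed-off walks of length $3$ in $C_3(\Gamma)$ that start at $v$ and end at a vertex $u\in T_i$, that is, $\sum_{u\in T_i}(A_C^3)_{vu}$. One count uses the local structure of $C_3(\Gamma)$ around $v$ and produces the $\rho$'s. The other evaluates $(A_C^3)_{vu}$ exactly from the strongly regular structure of $\Gamma$. Recall that here $\omega=k/2$ and $d/3=k/2-1$.

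\emph{Local count.} Write $(A_C^3)_{vu}=\sum_{w\in N(u)}(A_C^2)_{vw}$. The neighbours $w$ of $u$ fall into three kinds.
\begin{itemize}
\item They cannot include $v$, since $u\notin N(v)$.
\item If $w\in N(v)$, then $(A_C^2)_{vw}=\lambda^*$. Since $C_3(\Gamma)$ is an $\rca(m,d,k/2)$ (Theorem \ref{thm:rca2}), it is an edge regular graph with $\lambda^*=k/2-2=d/3-1$ by Theorem \ref{thm:rca1}. There are exactly $i$ such $w$.
\item If $w\in T_j$, then $(A_C^2)_{vw}=j$ by definition of $T_j$.
\end{itemize}
Hence
\[(A_C^3)_{vu}=i\left(\frac{d}{3}-1\right)+\sum_j j\,\bigl|N(u)\cap T_j\bigr|.\]
Summing over $u\in T_i$, each edge with one end in $T_i$ and the other in $T_j$ ($j\neq i$) contributes $j$ once. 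Each edge inside $T_i$ contributes $i$ from both of its endpoints, which accounts for the extra $i\rho_{i,i}$. Therefore
\[\sum_{u\in T_i}(A_C^3)_{vu}=i\left(\frac{d}{3}-1\right)\tau_i+i\rho_{i,i}+\sum_{j=1}^3 j\rho_{i,j}.\]

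\emph{Exact value of $(A_C^3)_{vu}$.} By Lemma \ref{lem:blockmat}, $A_C=R^\top R-3I$ and $RR^\top=A+\frac{k}{2}I$. For $u\neq v$ nonadjacent in $C_3(\Gamma)$ we have $(R^\top R)_{vu}=0$ and $I_{vu}=0$. Expanding
\[A_C^3=(R^\top R)^3-9(R^\top R)^2+27R^\top R-27I,\]
only the first two terms survive, and
\[(A_C^3)_{vu}=\bigl(R^\top (A+\tfrac{k}{2}I)^2R\bigr)_{vu}-9\bigl(R^\top (A+\tfrac{k}{2}I)R\bigr)_{vu}.\]
Here $(R^\top MR)_{vu}=\sum_{x\in v,\,y\in u}M_{xy}$, and the triangles $v,u$ are disjoint, so $I_{xy}=0$ throughout. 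Let $e$ be the number of edges of $\Gamma$ between the triangles $v$ and $u$. Using $A^2=kI+A+\mu(J-I-A)$, the sum of $(A^2)_{xy}$ over the $9$ pairs is $e+\mu(9-e)$. Therefore
\[(A_C^3)_{vu}=e+\mu(9-e)+ke-9e=9\mu+e(k-8-\mu).\]

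\emph{The key step: $e=i$.} An edge $xy$ with $x\in v$, $y\in u$ lies in a unique triangle, which is then a common neighbour of $v$ and $u$. Conversely, a common neighbour $w$ meets $v$ and $u$ in vertices $x\neq y$ (disjointness), which are adjacent inside $w$. Since each edge lies in a unique triangle, these two maps are mutually inverse, so $e=i$. This bijection is the only genuinely structural point, and I would check it carefully. Everything else is bookkeeping with the identities $d/3=k/2-1$, i.e.\ $k-8-\mu=2d/3-6-\mu$.

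Equating the two counts and moving $i(d/3-1)\tau_i$ to the left gives the coefficient
\[9\mu+i\left(\frac{2d}{3}-6-\mu\right)-i\left(\frac{d}{3}-1\right)=9\mu+i\left(\frac{d}{3}-5-\mu\right),\]
which is exactly the stated identity. The case $i=0$ needs no separate treatment.
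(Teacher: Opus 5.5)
Your proof is correct. Its skeleton matches the paper's: both double count the $3$-walks in $C_3(\Gamma)$ from $v$ to the vertices of $T_i$. Your local count is exactly the paper's right-hand side: $i(\frac{d}{3}-1)$ from the $i$ neighbours in $N(v)$, $j$ for each neighbour in $T_j$, and edges inside $T_i$ counted from both ends.

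Where you differ is in evaluating $(A_C^3)_{vu}$.
\begin{itemize}
\item The paper enumerates the walks directly in $\Gamma$. Each of the $i$ edges $u_jv_j$ between the two triangles gives $2(\frac{k}{2}-2)$ walks through $C_{u_jv_j}$. Each non-adjacent pair $(u_h,v_j)$ contributes walks through its common neighbours lying outside $u\cup v$. The paper asserts, after an unstated case analysis, that these total $(9-i)\mu-4i$. The $-4i$ discards common neighbours inside the two triangles, and that correction silently relies on the edges between $u$ and $v$ forming a matching.
\item You instead expand $A_C^3=(R^\top R-3I)^3$ and use Lemma \ref{lem:blockmat} with $A^2=kI+A+\mu(J-I-A)$. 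This gives $(A_C^3)_{vu}=9\mu+e(k-8-\mu)$ in one step. The only combinatorial input left is your bijection showing $e=i$, which the paper uses implicitly when it labels the triangles so that $u_j\sim v_j$ for $j\le i$.
\end{itemize}

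Both give the same total, since $2i(\frac{k}{2}-2)+(9-i)\mu-4i=9\mu+i(k-8-\mu)$. Your route is more rigorous and avoids the bookkeeping about which common neighbours lie inside $u\cup v$. It would also carry over unchanged to general $\omega$, using $A_C=R^\top R-\omega I$ and $RR^\top=A+\frac{k}{\omega-1}I$. The paper's route is more explicit about which walks actually occur, but it leaves the count for non-adjacent pairs to the reader.
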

\begin{proof}
    This is equivalent to showing
    \[\left((9-i)\mu -4i +2i\left(\frac{k}{2}-2 \right) \right)\tau_i=i\left(\frac{d}{3}-1\right)\tau_i+i\rho_{i,i}+\sum_{j=1}^3j\rho_{i,j}.\]
    First, we will show that the coefficient on $\tau_i$ from the left hand side of this equation counts the number of 3-walks between $v$ and a vertex $u\in T_i$. Consider $u$ and $v$ as triangles in $\Gamma$. Of the 3 vertices in triangle $u$, $i$ of them are adjacent to a vertex of triangle $v$. Call the vertices of triangle $u=\{u_1,u_2,u_3\}$ and $v=\{v_1,v_2,v_3\}$, ordered so that $u_j$ is adjacent to $v_j$ for $j=1,\ldots, i$, where no vertices are adjacent if $i=0$.\par 
    Consider first a pair of adjacent vertices $v_j$ and $u_j$. The edge $u_jv_j$ is contained in a unique triangle denoted $C_{u_jv_j}$. The edges of $u_j$ are partitioned into $\frac{k}{2}$ triangles of which $u$ and $C_{u_jv_j}$ are two. In $C_3(\Gamma)$, walking from $u$ to one of these triangles, then to $C_{u_jv_j}$ and finally to $v$ is a 3-walk. There is the same number of 3-walks obtained by considering the edges at $v_j$ and there are $i$ pairs of adjacent vertices to consider, so we add the term $2i\left(\frac{k}{2}-2 \right)$ to the coefficient. \par
    Now consider a pair of non adjacent vertices $v_j$ and $u_h$ with $j\neq h$. These vertices share $\mu$ common neighbors in $\Gamma$ since it is an srg$(n,k,1,\mu)$, but depending on if $u_h$ or $v_j$ is adjacent to a vertex in $v$ or $u$ respectively, some of these common neighbors may be in $u$ or $v$. Let $x\in N(v_j)\cap N(u_h)$ a common neighbor that is not in triangle $u$ or $v$. The edges $xv_j$ and $xu_h$ are contained in unique triangles $C_{xv_j}$ and $C_{xu_h}$, and so there is the 3-walk in $C_3(\Gamma)$ $u\to C_{xu_h}\to C_{xv_j}\to v$. Breaking down the cases for each $i=0,1,2,3$ and each pair of non-adjacent vertices $v_j$ and $u_h$, we can count that the number of these 3-walks is given by the term $(9-i)\mu -4i$.\par
    The right hand side counts the total number of 3-walks between vertices in $T_i$ and $v$ as well but from the perspective of $C_3(\Gamma)$. Let $u\in T_i$, and for each of the $i$ different vertices $x\in N(u)\cap N(v)$, and the $\left( \frac{d}{3} -1\right)$ vertices $y \in N(x)\cap N(v)$ we have a 3-walk $u\to x\to y \to v$. So we add the term $i\left(\frac{d}{3}-1\right)\tau_i$.\par
    Finally, for each edge $ux$ with $u\in T_i$ and $x\in T_j$ for some $j=0,1,2,3$, there are $j$ 3-walks from $u$ to $x$ to each of the $j$ vertices in $N(x)\cap N(v)$ and then to $v$. However, we need to double count the edges within $T_i$ since a 3-walk could start at either endpoint. So we add a $j\rho_{i,j}$ term for each $j$ but an additional term for $i\rho_{i,i}$.
\end{proof}
While it is clear that these four equations are independent, using SAGE we can verify that the $\tau,\rho$ system with the 11 equations we have so far shown has only rank 10, meaning one of the four equations from Proposition \ref{prop:3walk} is a linear combination of the other 9 equations and does not provide any useful restriction.\par
The last equation comes from counting the number of closed 5-walks at vertex $v$. For this equation we must first count the number of 5-walks in $C_3(\Gamma)$ starting and ending at $v$ for which every vertex in the walk is either $v$ or adjacent to $v$.
\clearpage
\begin{lem}
    The number of closed 5-walks at $v$ for which every vertex in the walk is either $v$ or adjacent to $v$, is given by
    \[\bigstar := \frac{d^4}{27}+\frac{d^3}{3} - d^2-d.\]
\end{lem}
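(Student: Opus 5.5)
The plan is to determine the induced subgraph $H$ of $C_3(\Gamma)$ on $\{v\}\cup N(v)$ exactly, and then count closed walks of length five at $v$ inside $H$ by splitting each walk at its returns to $v$.

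\emph{Structure of $H$.} By Theorem \ref{thm:rca2}, $C_3(\Gamma)$ is an $\rca(m,d,\omega)$ with $\omega=\frac{k}{2}$ and $d=3\left(\frac{k}{2}-1\right)$, so $\omega-1=\frac{d}{3}$. Write $a:=\frac{d}{3}$. As quoted in the proof of Lemma \ref{lem:nonadj}, \cite[Proposition 1]{guest} says that $N(v)$ induces a disjoint union of $\frac{d}{\omega-1}=3$ copies of $K_{a}$, with no edges between different copies. Hence $H$ consists of three cliques $K_{a+1}$ that pairwise meet only in $v$. A walk that stays in $\{v\}\cup N(v)$ uses only edges of $H$, so the quantity to compute is the number of closed $5$-walks at $v$ in $H$.

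\emph{Decomposing walks into excursions.} I cut a closed walk at $v$ at each visit to $v$. This splits it into excursions: each excursion leaves $v$, moves among non-$v$ vertices, and returns to $v$ only at its end. Since there are no edges between different copies of $K_a$, each excursion stays inside one of the $3$ blocks. Within a block, an excursion of length $j\ge 2$ is counted as follows:
\begin{itemize}
\item there are $a$ choices for the first vertex;
\item each of the next $j-2$ steps moves inside $K_a$, giving $a-1$ choices per step;
\item the last step returns to $v$ in exactly one way.
\end{itemize}
So a block has $a(a-1)^{j-2}$ excursions of length $j$, and summing over the $3$ blocks gives $3a(a-1)^{j-2}$. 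Excursions of length $1$ are impossible because there are no loops.

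\emph{Counting.} The compositions of $5$ into parts of size at least $2$ are $5$, $2+3$ and $3+2$. Multiplying the excursion counts for each composition gives
\[
3a(a-1)^3+2\cdot(3a)\cdot 3a(a-1)=3a^4+9a^3-9a^2-3a .
\]
Substituting $a=\frac{d}{3}$ turns this into $\frac{d^4}{27}+\frac{d^3}{3}-d^2-d=\bigstar$, which is the claimed formula.

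\emph{Main obstacle.} The only real subtlety is the structural step: making sure $N(v)$ is exactly three disjoint cliques of size $a$ with no edges between them. This is precisely what the regular clique assembly property guarantees, and without it walks could move between blocks and the excursion count would be wrong. The remaining work is routine algebra.
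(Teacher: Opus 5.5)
Your proof is correct. The structural step is the same as the paper's: the paper also describes the subgraph induced on $\{v\}\cup N(v)$ as $v$ joined to three disjoint copies of $K_{d/3}$, via a block adjacency matrix $B$ with diagonal blocks $J_{\frac{d}{3}}-I_{\frac{d}{3}}$. It then only asserts that $(B^5)_{1,1}=\bigstar$, checked in SAGE, and omits the hand computation as tedious.

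You differ in how that entry is evaluated. Your first-return decomposition has three ingredients: each excursion stays in one block, there are $3a(a-1)^{j-2}$ excursions of length $j$, and the only admissible compositions of $5$ are $5$, $2+3$ and $3+2$. Together these give $3a(a-1)^3+18a^2(a-1)=3a^4+9a^3-9a^2-3a$ in a few lines. This replaces the computer check with a short proof a reader can verify. It also yields the count for every walk length at once, through $1/(1-E(x))$ with $E(x)=\sum_{j\ge 2}3a(a-1)^{j-2}x^j$.

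If you want to stay closer to the paper's matrix language, an equally short route uses the equitable partition $\{v\},N(v)$. The desired count is the $(1,1)$ entry of the fifth power of the quotient matrix $\begin{bmatrix}0&3a\\1&a-1\end{bmatrix}$, which gives the same polynomial.
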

\begin{proof}
    The induced subgraph of $C_3(\Gamma)$ on vertices $v$ and $N(v)$ has adjacency matrix
    \[B:=\begin{bmatrix}
        0 & j^\top & j^\top & j^\top \\
        j & J_{\frac{d}{3}}-I_{\frac{d}{3}} & 0 & 0 \\
        j & 0 & J_{\frac{d}{3}}-I_{\frac{d}{3}} & 0 \\
        j & 0 & 0 & J_{\frac{d}{3}}-I_{\frac{d}{3}}
    \end{bmatrix}\]
    where $j$ is the all ones column vector of length $\frac{d}{3}$ and $jj^\top=J_{\frac{d}{3}}$ is the $\frac{d}{3} \times\frac{d}{3}$ all ones matrix. The number of closed 5-walks at vertex $v$ is given by $(B^5)_{1,1}$ which can be found when we notice that $j^\top j=\frac{d}{3}$. We omit a proof by hand because it is tedious and lengthy, but it can be verified with SAGE that $(B^5)_{1,1}=\bigstar$.
\end{proof}
\begin{prop} \label{prop:5walks}
    We have
    \[\sum_{i\leq j}ij\rho_{i,j}=\frac{\theta_5-\bigstar}{2}-\frac{4}{3}\Delta d-2\left(\frac{d}{3}-1 \right)\left(\Xi -\binom{d/3-1}{2}d \right).\]
\end{prop}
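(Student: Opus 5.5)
The plan is to count the closed $5$-walks at $v$ in $C_3(\Gamma)$ in two ways. By walk-regularity their number is $\theta_5$. Separately, I would classify each walk $v=w_0,w_1,w_2,w_3,w_4,w_5=v$ by which of its vertices lie outside $\{v\}\cup N(v)$. Since $w_1,w_4\in N(v)$, only $w_2$ and $w_3$ can lie outside, which gives four classes:
\begin{itemize}
\item neither lies outside, giving $\bigstar$ walks by the preceding lemma;
\item only $w_2$ lies outside;
\item only $w_3$ lies outside;
\item both lie outside.
\end{itemize}
Note that $w_2$ cannot be outside while $w_3=v$, and vice versa, because vertices of the $T_i$ are not adjacent to $v$.

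\emph{Both outside.} Here $w_2=u\in T_i$ and $w_3=x\in T_j$ with $ux$ an edge. Then $w_1$ is any of the $i$ vertices of $N(u)\cap N(v)$ and $w_4$ is any of the $j$ vertices of $N(x)\cap N(v)$. Summing $ij$ over ordered adjacent pairs $(u,x)$ counts each edge in both orientations, so this class contributes $2\sum_{i\le j}ij\rho_{i,j}$. The diagonal terms $\rho_{i,i}$ are also counted twice, which is exactly the convention that $\rho_{i,i}$ counts each edge once.

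\emph{Only $w_2=u\in T_i$ outside.} Then $w_1,w_3\in N(u)\cap N(v)$, and these may coincide, giving $i^2$ ordered choices. Next, $w_4$ is a neighbour of both $w_3$ and $v$. Since $C_3(\Gamma)$ is an $\rca(m,d,d/2\cdot\frac{2}{3}\cdots)$ whose neighbourhoods are three disjoint cliques of order $d/3$ (Proposition 1 of \cite{guest}, as used in Lemma \ref{lem:nonadj}), $w_4$ has $\frac d3-1$ choices. This class contributes $i^2(\frac d3-1)\tau_i$, and by symmetry the class with only $w_3$ outside contributes the same. Altogether,
\[\theta_5=\bigstar+2\sum_{i\le j}ij\rho_{i,j}+2\left(\frac d3-1\right)\sum_{i=0}^3 i^2\tau_i.\]

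Next I would eliminate $\sum i^2\tau_i=\tau_1+4\tau_2+9\tau_3$ using system (\ref{eq:system}). It equals $(\tau_1+2\tau_2+3\tau_3)+2(\tau_2+3\tau_3)$, which is
\[d(d-1)-2\Delta+2\left(\Xi-\binom{d/3-1}{2}d\right).\]
Solving for $\sum ij\rho_{i,j}$ already produces the last term of the claimed formula exactly. It remains to show that $(\frac d3-1)(d(d-1)-2\Delta)=\frac43\Delta d$.

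For this last identity I would compute $\Delta$ directly. Every triangle of $C_3(\Gamma)$ through $v$ lies inside one of the three $(\frac d3+1)$-cliques at $v$. Indeed, if three triangles of $\Gamma$ pairwise met in three distinct points $a,b,c$, then $abc$ would be a triangle of $\Gamma$ and each edge such as $ab$ would lie in two triangles, contradicting local linearity. Hence $\Delta=3\binom{d/3}{2}=\frac d2(\frac d3-1)$. Then $d(d-1)-2\Delta=\frac{2d^2}{3}$, so
\[\left(\frac d3-1\right)\frac{2d^2}{3}=\frac43\cdot d\cdot\frac d2\left(\frac d3-1\right)=\frac43\Delta d,\]
which gives the stated formula.

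The main obstacle is the careful bookkeeping in the case analysis. One must allow the degenerate walks with $w_1=w_3$ (or $w_2=w_4$), confirm that none of $w_2,w_3$ can equal $v$ in the mixed cases, and get the orientation factor for the $\rho_{i,i}$ terms right. A secondary point is the justification that all triangles through $v$ are local to its cliques.
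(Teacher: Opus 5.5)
Your proof is correct and is essentially the paper's argument: both double-count the closed $5$-walks at $v$ according to which of $w_2,w_3$ lie outside $\{v\}\cup N(v)$, obtaining $\bigstar$, then $2\sum_{i\le j}ij\rho_{i,j}$, then the walks with exactly one outside vertex. The only difference is in how you count that last class. The paper splits it into backtracking walks ($w_1=w_3$ or $w_2=w_4$), counted directly as $\frac{8}{3}\Delta d$ via triangles at $v$, and non-backtracking walks, which give $4(\frac d3-1)\binom{i}{2}\tau_i$. You keep them together as $2(\frac d3-1)\sum_i i^2\tau_i$, so you also need the explicit value $\Delta=3\binom{d/3}{2}$, which you justify correctly; the clique number in your garbled rca parenthetical should simply read $\frac d3+1=\frac k2$.
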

\begin{proof}
    This equation is equivalent to
    \[\theta_5=\bigstar+\frac{8}{3}\Delta d +4\left(\frac{d}{3}-1 \right)(\tau_2+3\tau_3)+2\sum_{i \leq j}ij\rho_{i,j}\]
    and since $\theta_5$ is the total number of closed 5-walks at vertex $v$, this can be shown with a counting argument. Since $\bigstar$ counts all 5-walks in which every vertex is $v$ or adjacent to $v$, we need now only count the 5-walks in which one or two vertices is not adjacent to $v$. For notational convenience, denote $N=\{v\}\cup N(v)$.\par
    Take some triangle at $v$ with other vertices $x$ and $y$. For each vertex $u\in N(x)-N$, there are the two closed 5-walks at $v$, $v\to x\to u\to x\to y\to v$ and the same walk in reverse order. By the structure of $C_3(\Gamma)$ apparent in Figure \ref{fig:hangingv}, we have $|N(x)-N|=d-|N(x)\cap N|=d-\frac{d}{3}=\frac{2}{3}d$. Since the same number of 5-walks comes from vertex $y$ and there are $\Delta$ triangles, we add the term $\frac{8}{3}\Delta d$.\par
    Now let $u\in T_i$ so that there are $i$ edges going from $u$ into $N(v)$. Choosing two of these $i$ edges, let $x,y \in N(v)$ be the vertices they connect to. For each of the $\frac{d}{3}-1$ vertices $z\in N(x)\cap N(v)$, there is the 5-walk $v\to z\to x\to u\to y\to v$ and its reversed order. Since the same number of 5-walks comes from vertex $y$ and there are $\binom{i}{2}$ ways to choose the edges off of $u$, we add the term $4\left(\frac{d}{3}-1\right)\binom{i}{2}\tau_i$ for each $i=0,1,2,3$. Since $\binom{0}{2}=\binom{1}{2}=0$, this becomes the term $4\left(\frac{d}{3}-1 \right)(\tau_2+3\tau_3)$, and we can replace $\tau_2+3\tau_3$ with $\Xi -\binom{d/3-1}{2}d$ from the original system (\ref{eq:system}). These first two terms capture every closed 5-walk with only one vertex not adjacent to $v$.\par
    And finally, let $xy$ be some edge of $C_3(\Gamma)$ with $x\in T_i$ and $y\in T_j$ so there are $i$ edges going from $x$ to $N(v)$ and $j$ edges going from $y$ to $N(v)$. Thus there are $ij$ different ways to choose some combination of these edges, and each combination corresponds to a closed 5-walk at $v$ and the reversed order walk. Since there are $\rho_{i,j}$ such edges, we add the term $2ij\rho_{i,j}$ for each $i,j=0,1,2,3$ and $i\leq j$. This term captures every closed 5-walk with two vertices not adjacent to $v$.
\end{proof}\par
Using SAGE, we can verify that this equation is also a linear combination of the other equations in the $\tau,\rho$ system, and so provides no useful information.\par
While these are theoretical proofs that $C_3(\Gamma)$ must satisfy the $\tau,\rho$ system, it is useful to verify that they hold for real graphs. There are four known constructions of srg$(n,k,1,\mu)$'s other than the three from Theorem \ref{thm:3srgs}. In table \ref{tab:taurho}, we show the parameters of these graphs, the parameters and spectrums of their 3-clique graphs, and the $\tau$ and $\rho$ values of these graphs numbered 1 to 4. We can use SAGE to show that these values do provide solutions to the $\tau,\rho$ system that we derived above.\par
The graphs 1,2 and 4 from table \ref{tab:taurho} are vertex transitive, which means that every vertex is symmetrical to every other vertex via a graph automorphism. As a consequence, fixing any vertex produces the same solution to the $\tau,\rho$ system. However graph 3, an srg$(378,52,1,8)$, is not vertex transitive and neither is its clique graph, and so fixing each vertex produces one of three different solutions to the $\tau,\rho$ system.\par

\begin{table}[htbp]
\centering
\begin{tabular}{|r|c|c|c|}
\hline
   \# & Parameters of $\Gamma$ & Parameters of $C_3(\Gamma)$ & Spectrum of $C_3(\Gamma)$ \\
\hhline{|====|}
1. & srg$(81,20,1,6)$ & rca$(270,27,10)$ & $27^1,9^{60},0^{20},-3^{189}$ \\
\hline
2. & srg$(243,22,1,2)$ & rca$(891,30,11)$ & $30^1,12^{132},3^{110},-3^{648}$ \\
\hline
3. & srg$(378,52,1,8)$ & rca$(3276,75,26)$ & $75^1,27^{273},12^{104},-3^{2,898}$ \\
\hline
4. & srg$(729,112,1,4)$ & rca$(13608,165,56)$ & $165^1,57^{616},30^{104},-3^{12,879}$ \\
\hline
\end{tabular}
\\[1em]
\begin{tabular}{|r||c|c|c|c|c|c|}
\hline
 \#   &  1.  &  2.  &  \multicolumn{3}{|c|}{3.}   &  4.  \\
 \hline
 $\tau_0$  &  $8$  &  $300$  &  $450$  &  $450$  &  $450$  &  $1,452$  \\
 \hline
 $\tau_1$  &  $0$  &  $540$  &  $1,800$  &  $1,800$  &  $1,800$  &  $5,940$  \\
 \hline
 $\tau_2$  &  $216$  &  $0$  &  $900$  &  $900$  &  $900$  &  $5,940$  \\
 \hline
 $\tau_3$  &  $18$  &  $20$  &  $50$  &  $50$  &  $50$  &  $110$  \\
 \hline
 $\rho_{0,0}$  &  $0$  &  $1800$  &  $3,525$  &  $3,569$  &  $3,375$  &  $21,780$  \\
 \hline
 $\rho_{0,1}$  &  $0$  &  $5400$  &  $21,000$  &  $20,824$  &  $21,600$  &  $130,680$  \\
 \hline
 $\rho_{0,2}$  &  $216$  &  $0$  &  $5,700$  &  $5,788$  &  $5,400$  &  $65,340$  \\
 \hline
 $\rho_{0,3}$  &  $0$  &  $0$  &  $0$  &  $0$  &  $0$  &  $0$  \\
 \hline
 $\rho_{1,1}$  &  $0$  &  $4,860$  &  $37,566$  &  $37,716$  &  $36,900$  &  $222,750$  \\
 \hline
 $\rho_{1,2}$  &  $0$  &  $0$  &  $35,136$  &  $35,064$  &  $36,000$  &  $392,040$  \\
 \hline
 $\rho_{1,3}$  &  $0$  &  $540$  &  $1,932$  &  $1,880$  &  $1,800$  &  $5,940$  \\
 \hline
 $\rho_{2,2}$  &  $2,376$  &  $0$  &  $11,664$  &  $11,604$  &  $11,250$  &  $249,480$  \\
 \hline
 $\rho_{2,3}$  &  $432$  &  $0$  &  $1,536$  &  $1,640$  &  $1,800$  &  $11,880$  \\
 \hline
 $\rho_{3,3}$  &  $0$  &  $0$  &  $66$  &  $40$  &  $0$  &  $0$ \\
 \hline
\end{tabular}
\caption{Known srg constructions and their solutions to the $\tau,\rho$ system}
\label{tab:taurho}
\end{table}
This non-vertex transitive graph is useful because it provides insight into possible expansions of the $\tau,\rho$ system. For one, since each solution has the same values for the $\tau$ variables, it seems likely that there exists a fourth linearly independent equation in the $\tau$ variables to extend the original system (\ref{eq:system}). Also, subtracting one solution vector from each of the other two, we can see that the null-space of any expansion of the $\tau,\rho$ system must have dimension at least two. This means there is at most one more linearly independent equation in the $\tau$ and $\rho$ variables that could be used to extend the system. This is a possible path for future research on this topic.\par
So we have shown for any srg$(n,k,1,\mu)$ to exist, there must be a solution in non-negative integers to the rank 10, 13 variable $\tau,\rho$ system. Although this appears to be a restrictive condition, it ultimately fails to limit the existence of these graphs. We use a Python algorithm discussed in Appendix \ref{ap:llsrg} to find all feasible parameters sets srg$(n,k,1,\mu)$ with degree $k$ not more than 50 million, and using a tool from Microsoft Research called Z3 \cite{z3}, we can show there exists solutions for all these parameter sets. So this condition does not eliminate any from possibility.

\chapter{Critical Groups}
For a graph $\Gamma=(V,E)$ with $n$ vertices and $c$ connected components, we define its \textbf{Laplacian} as $L=D-A$ where $D$ is $\Gamma$'s degree matrix and $A$ is its adjacency matrix. Viewing the Laplacian as a map $L:\ZZ^n\to \ZZ^n$, then $\text{coker}(L) = \ZZ^n/\text{Im}(L)\cong \ZZ^c \oplus \mathcal{K}(\Gamma)$ for some finite abelian group $\mathcal{K}(\Gamma)$ which we call the \textbf{critical group} of $\Gamma$.\par
It is known that the order of $\mathcal{K}(\Gamma)$ is equal to $\kappa(\Gamma)$, the number of spanning forests of the graph. This value $\kappa(\Gamma)$ is also equal to the determinant of any sub-matrix of $L$ obtained by removing the $i^{\textnormal{th}}$ row and column, and when the graph is connected this is equal to the product of the non-zero eigenvalues of the Laplacian divided by $n$, the number of vertices of $\Gamma$ \cite[Proposition 1.3.4]{haemers}. Since the Laplacian only has one zero eigenvalue when $\Gamma$ is connected, this is also equal to the expression $\frac{1}{n}\lim_{\lambda \to 0}\frac{(-1)^np(L,\lambda)}{-\lambda}$, which will be useful later.\par
There is a different formulation of the critical group of a graph that will turn out to be more useful for our purposes \cite[p. 319]{godsilbook}. Fix some arbitrary orientation of the edges of $\Gamma$, that is for each $\{x,y\} \in E$ define one vertex as directed towards the other and write them now as ordered pairs $(x,y)\in E$. This orientation can be arbitrary and up to isomorphism will not affect the critical group. \par
Now consider the free abelian group on the edges of $\Gamma$, $\ZZ^E$, with this fixed orientation. In order to examine edges oriented in the opposite direction, we define $(y,x)=-(x,y)$. For each vertex $v\in \Gamma$, the bond of that vertex is defined
\[b(v):=\sum_{u\in N(v)}(v,u)\in \ZZ^E\]
and the \textbf{bond space} $B\subseteq \ZZ^E$ of $\Gamma$ is defined as the integer span of the bonds of every vertex. We also define the \textbf{cycle space} $Z \subseteq \ZZ^E$ of $\Gamma$ as the integer span of all directed cycles of the graph, where a directed cycle is defined as
\[\sum_{j=1}^\ell(u_{j-1}, u_j) \in \ZZ^E\] where $u_{j-1}$ is adjacent to $u_j$, $u_0=u_\ell$, and all other vertices are distinct. It is a fact that $B$ and $Z$ are orthogonal with respect to the standard inner product.\par
With these definitions, it is possible to show that
\begin{equation}\label{eq:critical}\mathcal{K}(\Gamma)\cong \ZZ^E/(B\oplus Z)\end{equation}
which is much easier to deal with when defining maps between critical groups.
\section{The Critical Group of a Clique Subdivision Graph}
In this section, we will begin establishing connections between the critical groups of $\Gamma$ and $C_\omega(\Gamma)$ when $\Gamma$ is $\omega$-clique regular. To do this, it will be helpful to introduce a new graph transformation, the \textbf{$\omega$-clique subdivision graph}. This is defined so as to be the halfway transformation between $\Gamma$ and $C_\omega(\Gamma)$ and generalize the concept of the edge subdivision graph.\par
Denote the $\omega$-clique subdivision graph of $\Gamma$ as $S_\omega(\Gamma)$. The vertex set of $S_\omega(\Gamma)$ is the vertices of $\Gamma$ union the $\omega$-cliques of $\Gamma$, and the graph is bipartite on this partition with a vertex adjacent to a clique if and only if the clique contains that vertex. Figure \ref{fig:subdivision} shows an example of this construction.\par
\begin{figure}
    \centering
    \begin{subfigure}[c]{0.45\textwidth}
        \centering
        \includegraphics[width=0.8\textwidth]{gamma.png}
        \caption{A 3-clique regular graph $\Gamma$}
    \end{subfigure}
    \begin{subfigure}[c]{0.45\textwidth}
        \centering
        \includegraphics[width=0.8\textwidth]{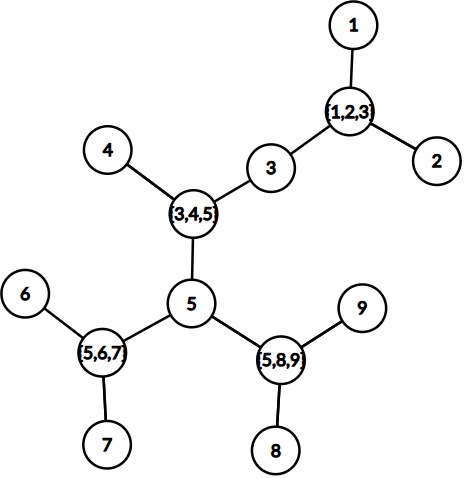}
        \caption{The graph $S_3(\Gamma)$}
    \end{subfigure}
    \caption{An example of the clique subdivision graph}
    \label{fig:subdivision}
\end{figure}
It should be clear by this definition, that if $R$ is the $\omega$-clique incidence matrix of $\Gamma$, then the adjacency matrix of $S_\omega(\Gamma)$ is $\begin{bmatrix}
    0 & R^\top\\
    R& 0
\end{bmatrix}$.
And if $\Gamma$ is $\omega$-clique regular with $n$ vertices and $m$ $\omega$-cliques, then the degree matrix of $S_\omega(\Gamma)$ is $\begin{bmatrix}
    \omega I_m & 0 \\
    0 & \frac{1}{\omega -1}D
\end{bmatrix}$ where $D$ is the degree matrix of $\Gamma$.\par
For the remainder of this chapter, $A$ will be the adjacency matrix of $\Gamma$, $D$ the degree matrix, $L=D-A$ the Laplacian, $R$ the $\omega$-clique incidence matrix, $V$ the vertex set, $E$ the edge set, and similarly for graphs $S_\omega(\Gamma)$ and $C_\omega(\Gamma)$ but with the subscripts $_S$ and $_C$ respectively.
\begin{thm}\label{thm:order}
    Suppose $\Gamma=(V,E)$ is an $\omega$-clique regular graph with $n$ vertices, $m$ $\omega$-cliques and $c$ connected components. Then 
    \begin{equation}\kappa(S_\omega(\Gamma))=\omega^{m-n+c}\kappa(\Gamma). \label{eq:spantree1}\end{equation}
    If $\Gamma$ is also $k$-regular, then
    \begin{equation}\kappa(C_\omega(\Gamma))=\left(\frac{k}{\omega-1}\right)^{m-n-c}\omega^{m-n+c}\kappa(\Gamma). \label{eq:spantree2}\end{equation}
\end{thm}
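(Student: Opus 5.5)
For \eqref{eq:spantree1}, every count of spanning forests here is multiplicative over connected components. The components of $S_\omega(\Gamma)$ correspond exactly to those of $\Gamma$: an isolated vertex stays isolated, and otherwise a vertex is joined to its cliques. So I would reduce to the case that $\Gamma$ is connected and prove $\kappa(S_\omega(\Gamma))=\omega^{m-n+1}\kappa(\Gamma)$. Multiplying over components, with $\sum_i (m_i-n_i+1)=m-n+c$, then gives \eqref{eq:spantree1}. An isolated vertex contributes $m_i=0$, $n_i=1$, and the factor $\omega^0=1$, so it is harmless.

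In the connected case I would use the reduced-Laplacian form of the matrix-tree theorem. By the description of the adjacency and degree matrices of $S_\omega(\Gamma)$ above,
\[L_S=\begin{bmatrix}\omega I_m & -R^\top\\ -R & \frac{1}{\omega-1}D\end{bmatrix}.\]
I would delete the row and column of one vertex $v\in\Gamma$, writing $R'$, $D'$, $A'$, $L'$ for the corresponding submatrices with row (and column) $v$ removed. I would then take the Schur complement with respect to the invertible block $\omega I_m$:
\[\kappa(S_\omega(\Gamma))=\omega^m\det\!\left(\tfrac{1}{\omega-1}D'-\tfrac{1}{\omega}R'R'^\top\right).\]
By Lemma \ref{lem:blockmat}(2), $R'R'^\top=A'+\frac{1}{\omega-1}D'$. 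The matrix inside the determinant therefore simplifies to $\frac{1}{\omega}(D'-A')=\frac{1}{\omega}L'$, an $(n-1)\times(n-1)$ matrix. Since $\det L'=\kappa(\Gamma)$, this yields $\omega^{m-(n-1)}\kappa(\Gamma)$, as required. No regularity is needed for this part.

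For \eqref{eq:spantree2}, assume $\Gamma$ is $k$-regular. Every component of $\Gamma$ is then $k$-regular and $\omega$-clique regular with $k\ge \omega-1>0$, and the components of $C_\omega(\Gamma)$ are the clique graphs of the components of $\Gamma$. So it again suffices to treat $\Gamma$ connected and show $\kappa(C_\omega(\Gamma))=\left(\frac{k}{\omega-1}\right)^{m-n-1}\omega^{m-n+1}\kappa(\Gamma)$. The exponents then sum correctly, since $m_i/n_i=k/(\omega(\omega-1))$ for every component.

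In the connected case I would compute Laplacian spectra. $C_\omega(\Gamma)$ is $d$-regular with $d=\omega\left(\frac{k}{\omega-1}-1\right)$, so $L_C=dI_m-A_C$. Substituting into Theorem \ref{thm:eigen} turns the characteristic polynomial of $A_C$ into that of $L_C$. An adjacency eigenvalue $k-\theta$ of $\Gamma$ (with $\theta$ a Laplacian eigenvalue of $\Gamma$) maps to $d-\left(\frac{k}{\omega-1}+k-\theta-\omega\right)=\theta$. The factor $(\lambda+\omega)^{m-n}$ becomes the Laplacian eigenvalue $d+\omega=\frac{\omega k}{\omega-1}$ with multiplicity $m-n$. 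Hence
\[p(L_C;x)=\left(x-\tfrac{\omega k}{\omega-1}\right)^{m-n}p(L;x).\]
This is an identity of polynomials even when $m<n$, and working with it directly avoids any bookkeeping about which eigenvalue of $\Gamma$ absorbs the negative exponent.

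Applying the formula $\kappa=\frac{1}{N}\lim_{\lambda\to0}\frac{(-1)^Np(L,\lambda)}{-\lambda}$ to both graphs then gives
\[\kappa(C_\omega(\Gamma))=\frac{n}{m}\left(\frac{\omega k}{\omega-1}\right)^{m-n}\kappa(\Gamma).\]
Using $n/m=\omega(\omega-1)/k$ this becomes $\left(\frac{k}{\omega-1}\right)^{m-n-1}\omega^{m-n+1}\kappa(\Gamma)$. The main obstacle is exactly this passage through the characteristic polynomial: checking the sign conventions, and checking that the zero eigenvalue of $L_C$ is simple (it is, since $C_\omega(\Gamma)$ is connected). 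The $S_\omega(\Gamma)$ part is a direct Schur-complement calculation.
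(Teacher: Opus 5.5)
Your proposal is correct. For \eqref{eq:spantree2} you follow the paper's route: convert Theorem \ref{thm:eigen} into $p(L_C;x)=\bigl(x-\frac{\omega k}{\omega-1}\bigr)^{m-n}p(L;x)$, apply the limit formula for $\kappa$, and multiply over components.

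For \eqref{eq:spantree1} your route is genuinely different. The paper computes the entire Laplacian characteristic polynomial of $S_\omega(\Gamma)$ by a $\lambda$-dependent Schur complement. This gives $(\omega-\lambda)^{m-n}\det M(\lambda)$ with $M(\lambda)=L-\lambda(\omega-\lambda)I_n-\frac{\lambda}{\omega-1}D$. The paper then extracts $\kappa$ from $\lim_{\lambda\to 0}\det M(\lambda)/(-\lambda)$ using L'H\^opital's rule and Jacobi's formula. That step relies on every principal cofactor of $L$ equalling $\kappa(\Gamma)$. It also needs the edge count $\sum_v d(v)=m\omega(\omega-1)$ to cancel the factor $\frac{1}{m+n}$.

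You instead apply the matrix-tree theorem directly to $S_\omega(\Gamma)$, deleting a row and column indexed by a vertex of $\Gamma$ rather than by a clique. The Schur complement at $\lambda=0$ is then exactly $\frac{1}{\omega}$ times the reduced Laplacian of $\Gamma$. The result drops out with no calculus, no adjugate, and no edge counting.

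Your argument is shorter and more transparent. What the paper's approach buys is a factorization of $p(L_S;\lambda)$ valid for all $\lambda$. For $k$-regular $\Gamma$, where $M(\lambda)=L-\bigl(\lambda(\omega-\lambda)+\frac{k\lambda}{\omega-1}\bigr)I_n$, this determines the whole Laplacian spectrum of $S_\omega(\Gamma)$ from that of $\Gamma$, not just the product of its nonzero eigenvalues. You also treat isolated-vertex components explicitly, which the paper leaves implicit.
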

Note that this implies if $\Gamma$ is $k$-regular, then \[\kappa(C_\omega(\Gamma))=\left(\frac{k}{\omega-1}\right)^{m-n-c}\kappa(S_\omega(\Gamma))\] and it was this fact which motivated the definition of $S_\omega(\Gamma)$. For the remainder of this section, $n,m,$ and $c$ will be defined as they are in Theorem \ref{thm:order}.
\begin{proof}
    We will begin by showing equations (\ref{eq:spantree1}) and (\ref{eq:spantree2}) when $\Gamma$ is connected ($c=1$). The characteristic polynomial of the Laplacian of $S_\omega(\Gamma)$ is given by
    \begin{align*}
        (-1)^{m+n}p(L_S;\lambda)&= \begin{vmatrix}
                            (\omega-\lambda)I_m & -R^\top \\
                            -R & \frac{1}{\omega-1}D-\lambda I_n
                        \end{vmatrix} \\
        &= (\omega - \lambda)^m \begin{vmatrix} \frac{1}{\omega-1}D-\lambda I_n -\frac{1}{\omega - \lambda}RR^\top\end{vmatrix} & \mbox{from \cite[p. 114]{abadir2005matrix}}\\
        &= (\omega - \lambda)^{m-n} \begin{vmatrix} \frac{\omega-\lambda}{\omega-1}D-\lambda(\omega -\lambda) I_n -RR^\top\end{vmatrix} \\
        &= (\omega - \lambda)^{m-n} \begin{vmatrix} \frac{\omega-\lambda}{\omega-1}D-\lambda(\omega -\lambda) I_n -A-\frac{1}{\omega -1} D\end{vmatrix} & \mbox{by Lemma \ref{lem:blockmat}}\\
        &= (\omega - \lambda)^{m-n} \begin{vmatrix}D-A-\lambda(\omega -\lambda) I_n -\frac{\lambda}{\omega -1} D\end{vmatrix}.
    \end{align*}
    For brevity define $M(\lambda):=D-A-\lambda(\omega -\lambda) I_n -\frac{\lambda}{\omega -1} D$. So we get
    \begin{align*}
        \kappa(S_\omega(\Gamma))&= \frac{1}{m+n}\lim_{\lambda \to 0}\frac{(-1)^{m+n}p(L_S;\lambda)}{-\lambda} \\
        &= \frac{\omega^{m-n}}{m+n}\lim_{\lambda \to 0}\frac{|M(\lambda) |}{-\lambda} \\
        &= \frac{-\omega^{m-n}}{m+n}\lim_{\lambda \to 0}\text{tr}\left(\text{adj}(M(\lambda)) \left(\frac{-1}{\omega -1}D+(2\lambda -\omega)I_n\right)\right) 
        \end{align*}
        where this last equality follows from L'hopitals rule, and since the derivative of the determinant $|M(\lambda)|$ is given by Jacobi's formula \cite[p. 149]{magnus}.\\Carrying on and letting $v_1,\ldots,v_n$ be the vertices of $\Gamma$ and we have
        \begin{align*}
        \kappa(S_\omega(\Gamma))&= \frac{-\omega^{m-n}}{m+n}\text{tr}\left(\text{adj}(D-A) \left(\frac{-1}{\omega -1}D -\omega I_n\right)\right) \\
        &= \frac{-\omega^{m-n}}{m+n}\text{tr}\begin{bmatrix} \kappa(\Gamma)\left(\frac{-d(v_1)}{\omega -1}-\omega\right)  & &  \\
        & \ddots & \\
        & & \kappa(\Gamma)\left(\frac{-d(v_n)}{\omega -1}-\omega\right)
        \end{bmatrix} \\
        &= \frac{-\omega^{m-n}}{m+n}\sum_{v_i\in V} \kappa(\Gamma)\left(\frac{-d(v_i)}{\omega -1}-\omega\right) \\
        &= \frac{\omega^{m-n+1}\kappa(\Gamma)}{m+n}\left(\frac{\Sigma_{v_i\in V} d(v_i)}{\omega(\omega -1)}+n\right) \\
        &= \frac{\omega^{m-n+1}\kappa(\Gamma)}{m+n}\left(m+n\right) = \omega^{m-n+1}\kappa(\Gamma)
    \end{align*}
    since the sum of all degrees in a graph is twice the number of edges. Similarly, the characteristic polynomial of the Laplacian of $C_\omega(\Gamma)$ is given by
    \begin{align*}
        (-1)^mp(L_C;\lambda) &= p\left(A_C;\;\omega \left(\frac{k}{w-1}-1 \right) -\lambda\right) & \mbox{since $C_\omega(\Gamma)$ is regular}\\
        &= \left( \omega \left(\frac{k}{w-1} \right) -\lambda \right)^{m-n}p\left(A;\; k -\lambda\right) & \mbox{by Theorem \ref{thm:eigen}}\\
        &= \left( \omega \left(\frac{k}{w-1} \right) -\lambda \right)^{m-n}(-1)^n p\left(L;\;\lambda\right). & \mbox{since $\Gamma$ is regular}
    \end{align*}
    So we have 
    \begin{align*}
        \kappa(C_\omega(\Gamma)) &= \frac{1}{m} \lim_{\lambda \to 0}\frac{(-1)^{m}p(L_C;\lambda)}{-\lambda} \\
        &= \frac{\omega(\omega -1)}{nk} \left(\frac{k}{\omega -1} \right) ^{m-n}\omega^{m-n} \lim_{\lambda \to 0}\frac{(-1)^{n}p(L;\lambda)}{-\lambda} \\
        &= \left(\frac{k}{\omega -1} \right) ^{m-n-1}\omega^{m-n+1} \frac{1}{n}\lim_{\lambda \to 0}\frac{(-1)^{n}p(L;\lambda)}{-\lambda} \\
        &= \left(\frac{k}{\omega -1} \right) ^{m-n-1}\omega^{m-n+1} \kappa(\Gamma).
    \end{align*}
    For any $c$, the result now follows from the fact that the number of spanning forests equals the product of the number of spanning trees of each connected component of a graph. 
\end{proof}
This relationship between the orders suggests a deeper relationship between the groups and is what motivated this investigation. It was known for the case of the line graph and edge subdivision graph ($\omega =2$) but we have generalized this to the $\omega$-clique graph and $\omega$-clique subdivision graph for any $\omega\geq 2$.\par
The authors of \cite{reiner} used a known relationship between the critical group of a graph and its edge subdivision graph (2-clique subdivision graph), to derive a relationship between a graph and its line graph (2-clique graph). We will generalize their argument as far as it is possible to any $\omega\geq2$, to attempt to generalize the relationship between the critical group of an $\omega$-clique regular graph and its subdivision graph and clique graph.\par
We begin by examining the relationship between $\mathcal{K}(\Gamma)$ and $\mathcal{K}(S_\omega(\Gamma))$. Let $B$ and $Z$ represent the bond and cycle space of $\Gamma$ respectively, with $B_S$ and $Z_S$ representing them for $S_\omega(\Gamma)$. In general, anything with no subscript relates to $\Gamma$, and anything with the subscript $_S$ relates to $S_\omega(\Gamma)$.\par
For basis vectors $(v,C)\in E_S$, with $v$ a vertex in $\Gamma$ and $C$ an $\omega$-clique containing $v$, and $(x,y)\in E$ for arbitrary adjacent vertices $x$ and $y$, define the $\ZZ$-module homomorphisms
\begin{align*}
    h:\mathbb{Z}^{E_S} &\rightarrow \mathbb{Z}^E \\
    (v,C) &\mapsto \sum_{u \in C-\{v\}}(v,u) \\
    \\
    h^\top:\mathbb{Z}^E &\rightarrow \mathbb{Z}^{E_S} \\
    (x,y) &\mapsto (x,C_{xy}) +(C_{xy},y)
\end{align*} where $C_{xy}$ is the unique $\omega$-clique containing edge $(x, y)$.
\clearpage
\begin{lem}
    The functions $h$ and $h^\top$ are adjoint.
\end{lem}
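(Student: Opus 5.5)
We need to show that $\langle h(\epsilon),\eta\rangle_E=\langle \epsilon, h^\top(\eta)\rangle_{E_S}$ for all $\epsilon\in\ZZ^{E_S}$ and $\eta\in\ZZ^E$, where $\langle\cdot,\cdot\rangle$ denotes the standard inner product on each free module. This is the inner product with respect to which $B$ and $Z$ are orthogonal. Both sides are bilinear, so it suffices to check the identity on basis vectors. Each oriented edge of $S_\omega(\Gamma)$ can be written as $(v,C)$ with $v\in C$; the opposite orientation is handled by the convention $(C,v)=-(v,C)$. We therefore take $\epsilon=(v,C)$ and $\eta=(x,y)$ with $\{x,y\}\in E$.

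Both inner products are computed using the sign conventions $(y,x)=-(x,y)$ and $(C,y)=-(y,C)$. This means $\langle (a,b),(x,y)\rangle$ equals $1$ if $(a,b)=(x,y)$, equals $-1$ if $(a,b)=(y,x)$, and equals $0$ otherwise. For the left side, $h(v,C)=\sum_{u\in C-\{v\}}(v,u)$, so pairing with $(x,y)$ gives the following:
\begin{itemize}
\item $+1$ if $v=x$ and $y\in C$;
\item $-1$ if $v=y$ and $x\in C$;
\item $0$ otherwise.
\end{itemize}
No edge appears twice in the sum, so no cancellation occurs. For the right side, $h^\top(x,y)=(x,C_{xy})-(y,C_{xy})$, so pairing with $(v,C)$ gives $[v=x,\,C=C_{xy}]-[v=y,\,C=C_{xy}]$. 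These two indicator terms cannot both be nonzero, since $x\neq y$.

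The main step is matching the two case descriptions, and it uses the $\omega$-clique regularity of $\Gamma$. Suppose $v=x$. Then $v\in C$, so the condition $y\in C$ says exactly that the edge $xy$ lies in the $\omega$-clique $C$. Because the $\omega$-clique containing $xy$ is unique, this holds if and only if $C=C_{xy}$. The case $v=y$ follows by the same argument with the roles of $x$ and $y$ swapped. If $v\notin\{x,y\}$, both sides are $0$. The two sides therefore agree on every pair of basis vectors.

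The only point that needs care is the orientation bookkeeping: one has to check that the sign of each term in $h^\top$ agrees with the orientations of the terms in $h$. Once the convention $(C,y)=-(y,C)$ is applied, this check is immediate. The uniqueness of $C_{xy}$ is the one place where the hypothesis on $\Gamma$ is used.
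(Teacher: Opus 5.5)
Your proof is correct and follows the same route as the paper: reduce to basis vectors $(v,C)$ and $(x,y)$, compute both pairings as $\pm1$ or $0$, and match the cases. The one difference is that you state explicitly where $\omega$-clique regularity enters, namely that uniqueness of the $\omega$-clique through $xy$ turns the condition $y\in C$ (with $x=v\in C$) into $C=C_{xy}$; the paper's displayed case split uses this step without comment.
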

\begin{proof}
    It suffices to show for arbitrary basis elements $(x,y)$ and $(u,C)$ of $\mathbb{Z}^{E}$ and $\mathbb{Z}^{E_S}$ respectively, that
    \begin{align*}
    \langle h(u,C), (x,y) \rangle &= \sum_{v \in C - \{ u \}} \langle (u,v), (x,y) \rangle \\
    &= \begin{cases}
        1 & \text{if $C = C_{xy}$ and $u = x$} \\
        -1 & \text{if $C = C_{xy}$ and $u = y$} \\
        0 & \text{otherwise}
    \end{cases} \\
    &= \langle (u, C), (x, C_{xy}) \rangle + \langle (u, C), (C_{xy}, y) \rangle \\
    &= \langle (u,C), h^\top(x,y)\rangle
\end{align*}
where $\langle \cdot,\cdot\rangle$ is the standard inner product.
\end{proof}
Using the critical group formulation in equation (\ref{eq:critical}), we can show that $h$ and $h^\top$ induce homomorphisms between the critical groups $\mathcal{K}(\Gamma)$ and $\mathcal{K}(S_\omega(\Gamma))$. That is, the maps defined by $x+(B\oplus Z) \mapsto h^\top(x)+(B_S\oplus Z_S)$ and $y+(B_S\oplus Z_S) \mapsto h(y)+(B\oplus Z)$ are well defined homomorphisms.
\begin{lem}
    The functions $h$ and $h^\top$ induce homomorphisms,
    \[h:\mathcal{K}(S_\omega(\Gamma))\rightarrow \mathcal{K}(\Gamma)\]
    \[h^\top:\mathcal{K}(\Gamma)\rightarrow \mathcal{K}(S_\omega(\Gamma)).\]
\end{lem}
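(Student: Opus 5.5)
To prove that $h$ and $h^\top$ descend to the critical groups, I will use the description $\mathcal{K}(\Gamma)\cong \ZZ^E/(B\oplus Z)$ from (\ref{eq:critical}). It suffices to show that $h(B_S)\subseteq B$, $h(Z_S)\subseteq Z$, $h^\top(B)\subseteq B_S$ and $h^\top(Z)\subseteq Z_S$. Each inclusion only needs to be checked on generators: the bonds $b(\cdot)$ and the directed cycles. One could shortcut half of this with adjointness, since $B$ and $Z$ are orthogonal complements of each other up to saturation. That route requires care over $\ZZ$, however, so I will verify all four inclusions directly.

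\textbf{Bonds.} For $h^\top$, take a vertex $x\in\Gamma$. Then
\[h^\top(b(x))=\sum_{y\in N(x)}\big((x,C_{xy})+(C_{xy},y)\big).\]
The neighbors of $x$ are partitioned by the $\frac{d(x)}{\omega-1}$ cliques containing $x$, each clique contributing $\omega-1$ neighbors. So the first part of the sum is $(\omega-1)b_S(x)$. For a fixed clique $C\ni x$, the second part contributes $\sum_{y\in C-\{x\}}(C,y)$, which equals $b_S(C)-(C,x)=b_S(C)+(x,C)$. Summing over the cliques containing $x$ gives
\[h^\top(b(x))=(\omega-1)b_S(x)+\sum_{C\ni x}b_S(C)+b_S(x)=\omega\, b_S(x)+\sum_{C\ni x} b_S(C)\in B_S.\]
For $h$, take first a vertex $v$ of $\Gamma$. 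Then $h(b_S(v))=\sum_{C\ni v}\sum_{u\in C-\{v\}}(v,u)=b(v)$, because every neighbor of $v$ lies in exactly one clique with $v$. Next take a clique $C$. Then $h(b_S(C))=-\sum_{v\in C}\sum_{u\in C-\{v\}}(v,u)=0$, since each ordered pair inside $C$ cancels with its reverse.

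\textbf{Cycles.} Take a directed cycle $\sum_j(u_{j-1},u_j)$ in $\Gamma$. Its image under $h^\top$ is the closed walk $u_0\to C_{u_0u_1}\to u_1\to\cdots$ in $S_\omega(\Gamma)$. Any closed walk, read as an element of $\ZZ^{E_S}$ via $(y,x)=-(x,y)$, is an integer combination of directed cycles, and so lies in $Z_S$. I plan to cite or briefly justify this standard fact: the walk has zero boundary, and the integer flow space is spanned by cycles (decompose the walk at its first repeated vertex, by induction on length). If a clique repeats consecutively, backtracking terms cancel, which is harmless.

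For $h$, take a directed cycle in $S_\omega(\Gamma)$. Since $S_\omega(\Gamma)$ is bipartite, it alternates $v_0,C_1,v_1,C_2,\dots,v_\ell=v_0$, with $v_{i-1},v_i\in C_i$. Its image is
\[\sum_i\big(h(v_{i-1},C_i)-h(v_i,C_i)\big)=\sum_i\Big(\sum_{u\in C_i-\{v_{i-1}\}}(v_{i-1},u)-\sum_{u\in C_i-\{v_i\}}(v_i,u)\Big).\]

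I expect this last computation to be the main obstacle, and I will handle it by evaluating one clique term at a time. For a fixed clique $C$ and distinct $a,b\in C$, I will show that
\[\sum_{u\in C-\{a\}}(a,u)-\sum_{u\in C-\{b\}}(b,u)=\omega(a,b)+\sum_{u\in C-\{a,b\}}\big[(a,u)+(u,b)-(a,b)\big].\]
This follows after separating out the $u=b$ and $u=a$ terms. Each bracketed term is a directed triangle in $C$, so it lies in $Z$. Consequently the image of the cycle is congruent modulo $Z$ to $\omega\sum_i(v_{i-1},v_i)$, which is $\omega$ times a closed walk in $\Gamma$ and therefore lies in $Z$. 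Combining the bond and cycle cases gives all four inclusions, and hence the induced homomorphisms.
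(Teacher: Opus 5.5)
Your proof is correct, but it is organized differently from the paper's.

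\textbf{The paper's route.} The paper does only two computations: it shows $h^\top(b(v))=\omega\, b_S(v)+\sum_{C\ni v}b_S(C)$, and it asserts that $h^\top$ of a directed cycle lies in $Z_S$. The two inclusions for $h$ then come from adjointness. For instance, if $z\in Z_S$ and $b\in B$, then $\langle h(z),b\rangle=\langle z,h^\top(b)\rangle=0$.

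As you suspected, this step uses more than the stated orthogonality of $B$ and $Z$. It needs $B^\perp\cap\ZZ^E=Z$ and $Z^\perp\cap\ZZ^E=B$. Both do hold exactly, not just up to saturation: integer flows are integer combinations of fundamental cycles, and integer vectors orthogonal to every cycle are integer coboundaries. So the shortcut is valid once these facts are cited.

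\textbf{Your route.} Checking all four inclusions directly avoids those facts entirely. The checks $h(b_S(v))=b(v)$ and $h(b_S(C))=0$ are immediate. Your clique identity for $h$ on a cycle of $S_\omega(\Gamma)$ is exactly the computation $hh^\top(x,y)\equiv\omega(x,y)\pmod{Z}$ that the paper carries out later in Lemma~\ref{lem:scalar}. This is because $h(v_{i-1},C_i)-h(v_i,C_i)=hh^\top(v_{i-1},v_i)$, where $C_i=C_{v_{i-1}v_i}$ by clique regularity.

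You also make explicit a point the paper passes over. The image under $h^\top$ of a directed cycle is in general only a closed walk in $S_\omega(\Gamma)$, since cliques may repeat. So the closed-walk decomposition you invoke is genuinely needed there, not just for the $h$ direction.

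\textbf{Comparison.} The paper's version is shorter. Yours is self-contained and front-loads work that the paper reuses in the next lemma.
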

\begin{proof}
    It is clear from equation (\ref{eq:critical}) that we must show $h(B_S \oplus Z_S) \subseteq B\oplus Z$ and $h^\top(B \oplus Z) \subseteq B_S\oplus Z_S$. Since $h$ and $h^\top$ are adjoint and $B$ is orthogonal to $Z$, it suffices to show that $h^\top(B)\subseteq B_S$ and $h^\top(Z) \subseteq Z_S$. Since $B$ is spanned by the bonds of $\Gamma$, let $v\in V$ and we will show $h^\top(b(v)) \in B_S$. We have
    \begin{align*}
        h^\top(b(v)) &= h^\top\left(\sum_{v \in N(v)}(v,u) \right) \\
        &= \sum_{u \in N(v)}(v,C_{vu})+\sum_{u \in N(v)}(C_{vu},u) \\
        &= (\omega-1)\sum_{C \in N_S(v)}(v,C)+\sum_{C \in N_S(v)}\left(\sum_{u \in N_S(C)}(C,u) \right)-\sum_{C \in N_S(v)}(C,v) \\
        &= \omega b_S(v)+\sum_{C \in N_S(v)}b_S(C) \\
        &\in B_S.
    \end{align*}

    Similarly, let $z=\sum_{j=1}^\ell(u_{j-1}, u_j)$ be a directed cycle in $\Gamma$ and we will show $h^\top(z)\in Z_S$. We have 
    \begin{align*}
        h^\top(z) &= h^\top \left(\sum_{j=1}^\ell(u_{j-1}, u_j) \right) \\
        &= \sum_{j=1}^\ell\Big((u_{j-1}, C_{u_{j-1}u_{j}})+(C_{u_{j-1}u_{j}}, u_j) \Big) \\
        &\in Z_S.
    \end{align*}
    So the linear functions $h$ and $h^\top$ induce homomorphisms between the respective critical groups, which we also denote with $h$ and $h^\top$.
\end{proof}
Notice from Theorem \ref{thm:order}, for groups $\mathcal{K}(\Gamma)$ and $\mathcal{K}(S_\omega(\Gamma))$, which group is bigger entirely depends on the sign of $m-n+c$. What testing with SAGE indicates is that if $m-n+c \geq 0$, then $h$ is onto and if $m-n+c\leq 0$, then $h^\top$ is onto. In fact, in the relevant cases it seems that $\ker(h) \cong (\ZZ/\omega\ZZ)^{m-n+c}$ or $\ker(h^\top)\cong (\ZZ/\omega\ZZ)^{n-m-c}$. If this were the case, it would show the following short exact sequences
\[0\rightarrow (\ZZ/\omega\ZZ)^{m-n+c} \rightarrow \mathcal{K}(S_\omega(\Gamma)) \xrightarrow{h} \mathcal{K}(\Gamma) \rightarrow 0,\]
\[0\rightarrow (\ZZ/\omega\ZZ)^{n-m-c} \rightarrow \mathcal{K}(\Gamma) \xrightarrow{h^\top} \mathcal{K}(S_\omega(\Gamma)) \rightarrow 0\] 
if $m-n+c\geq0$ or $\leq0$ respectively. Implying $\mathcal{K}(\Gamma)\cong \mathcal{K}(S_\omega(\Gamma))/\ker(h)$ or $\mathcal{K}(S_\omega(\Gamma))\cong \mathcal{K}(\Gamma)/\ker(h^\top)$ in the relevant cases.\par
This is where the generalization differs from the argument presented in \cite{reiner}. In the case of the line graph $(\omega=2)$ it can be shown that $m-n+c\geq0$ always. But in the generalization, this is not the case, and so we must do twice as much work.\par
Starting with the case $m-n+c \geq 0$, to show $\ker(h) \cong (\ZZ/\omega\ZZ)^{m-n+c}$ we claim it suffices to show: 
\begin{enumerate}
    \item[(1)] the order of every element in $\ker(h)$ divides $\omega$,
    \item[(2)] $\omega^{m-n+c}$ divides $ |\ker(h)|$ , and
    \item[(3)] $\ker(h)$ can be generated by $m-n+c$ elements.
\end{enumerate}\par
By the Fundamental Theorem of Finite Abelian Groups, $\ker(h)\cong \bigoplus_{i=1}^k(\ZZ/d_i\ZZ)$ for some natural numbers $k$ and $d_i>1$ such that $d_i$ divides $d_{i+1}$ for all $i$. If (1) can be shown, it would require that $d_i$ divides $\omega$ for all $i$; $d_i\leq \omega$. So if (3) can be shown, it would imply that $k\leq m-n+c$, and thus \begin{equation}\label{eq:kernel} |\ker(h)|=\prod_{i=1}^k d_i \leq \omega^k \leq \omega^{m-n+c}.\end{equation} Given this, showing (2) would imply that $|\ker(h)|=\omega^{m-n+c}$. So if $d_i<\omega$ for any $i$, the first inequality in (\ref{eq:kernel}) would be strict, and if $k<m-n+c$, the second inequality in (\ref{eq:kernel}) would be strict. Since either case would imply $|\ker(h)|<\omega^{m-n+c}$, we must have $d_i=\omega$ for all $i $ and $k=m-n+c$. Thus $\ker(h)\cong(\ZZ/\omega\ZZ)^{m-n+c}$.\par
A similar argument can be made to show $\ker(h^\top) \cong (\ZZ/\omega\ZZ)^{n-m-c}$ in the case $m-n+c \leq 0$.\par
We will begin by showing (1) in both cases.
\begin{lem}\label{lem:scalar}
    The composite functions 
    \[h h^\top:\mathcal{K}(\Gamma) \rightarrow \mathcal{K}(\Gamma)\]
    \[h^\top  h:\mathcal{K}(S_\omega(\Gamma)) \rightarrow \mathcal{K}(S_\omega(\Gamma))\] are both scalar multiplication by $\omega$.
\end{lem}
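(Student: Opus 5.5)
Since $h$ and $h^\top$ are induced by $\ZZ$-linear maps on $\ZZ^{E_S}$ and $\ZZ^E$, the plan is to check the identity on basis elements of the free groups. Concretely, I will show that $hh^\top(x,y)-\omega(x,y)\in B\oplus Z$ for every oriented edge $(x,y)\in E$, and that $h^\top h(v,C)-\omega(v,C)\in B_S\oplus Z_S$ for every $(v,C)\in E_S$. By linearity and equation (\ref{eq:critical}), this gives that both composites act as multiplication by $\omega$ on the critical groups.

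For $hh^\top$, I would first write $h^\top(x,y)=(x,C)+(C,y)=(x,C)-(y,C)$, where $C=C_{xy}$, using the convention $(C,y)=-(y,C)$. Applying $h$ gives
\[hh^\top(x,y)=\sum_{u\in C-\{x\}}(x,u)-\sum_{u\in C-\{y\}}(y,u).\]
The first sum contains $(x,y)$, and the second contains $(y,x)=-(x,y)$. Together these contribute $2(x,y)$, and the remaining terms are
\[\sum_{u\in C-\{x,y\}}\big((x,u)+(u,y)\big).\]
For each of the $\omega-2$ vertices $u\in C-\{x,y\}$, the vertices $x,u,y$ form a triangle in $\Gamma$, since $C$ is a clique. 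Hence $(x,u)+(u,y)+(y,x)$ is a directed cycle and lies in $Z$, so $(x,u)+(u,y)\equiv(x,y)$ modulo $Z$. Summing, $hh^\top(x,y)\equiv 2(x,y)+(\omega-2)(x,y)=\omega(x,y)$ modulo $B\oplus Z$.

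For $h^\top h$, take a basis element $(v,C)$. Then $h(v,C)=\sum_{u\in C-\{v\}}(v,u)$. Every such edge $vu$ lies in the clique $C$, so $C_{vu}=C$, and therefore
\[h^\top h(v,C)=\sum_{u\in C-\{v\}}\big((v,C)+(C,u)\big)=(\omega-1)(v,C)+\sum_{u\in C-\{v\}}(C,u).\]
In $S_\omega(\Gamma)$ the neighbours of the clique-vertex $C$ are exactly its $\omega$ vertices. So the bond is $b_S(C)=\sum_{u\in C}(C,u)$, which gives
\[\sum_{u\in C-\{v\}}(C,u)=b_S(C)-(C,v)=b_S(C)+(v,C).\]
Hence $h^\top h(v,C)=\omega(v,C)+b_S(C)\equiv\omega(v,C)$ modulo $B_S$.

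I do not expect a serious obstacle. The only delicate point is bookkeeping: keeping the orientation signs consistent, and recognising the leftover terms as a bond in the second computation and as cycles in the first. The first computation is where the clique structure genuinely matters, because the triangles inside $C_{xy}$ are what make the cross terms collapse to multiples of $(x,y)$ modulo $Z$.
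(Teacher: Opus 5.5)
Your proposal is correct and follows the paper's proof essentially line for line: you evaluate both composites on basis edges, identify the leftover term in $h^\top h(v,C)$ as the bond $b_S(C)$, and collapse the cross terms in $hh^\top(x,y)$ modulo $Z$ using the triangles on $x,u,y$ inside $C_{xy}$. The only difference is cosmetic: you reduce each $(x,u)+(u,y)$ to $(x,y)$ one at a time, whereas the paper rewrites $2(x,y)$ as $\omega(x,y)+(\omega-2)(y,x)$ and distributes the $(y,x)$ terms into the sum, which is the same step.
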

\begin{proof}
    It suffices to show this for arbitrary basis elements $(x,y)$ and $(v,C)$ of $\mathbb{Z}^{E}$ and $\mathbb{Z}^{E_S}$ respectively. First we have
    \begin{align*}
        h^\top h(v,C) &=h^\top \left( \sum_{u\in C-\{v\}}(v,u)\right) \\
        &=\sum_{u\in C-\{v\}} \Big((v, C)+(C,u)\Big) \\
        &= (\omega-1)(v,C)+\sum_{u\in C-\{v\}}(C,u)  \\
        &= \omega(v,C)+\sum_{u\in C}(C,u)  \\
        &= \omega(v,C)+b_S(C) \\
        &= \omega(v,C) \pmod{B_S}.
        \end{align*}
    And we also have
    \begin{align*}
        hh^\top(x,y) &=h\Big( (x,C_{xy})+(C_{xy},y) \Big) \\
        &= \sum_{u\in C_{xy}-\{x\}}(x,u)+\sum_{w\in C_{xy}-\{y\}}(w,y) \\
        &= 2(x,y)+\sum_{u\in C_{xy}-\{x,y\}} \Big( (x,u) +(u,y) \Big) \\
        &= \omega(x,y) +(\omega -2)(y,x)+\sum_{u\in C_{xy}-\{x,y\}} \Big( (x,u) +(u,y) \Big) \\
        &= \omega(x,y) +\sum_{u\in C_{xy}-\{x,y\}} \Big( (x,u) +(u,y) +(y,x) \Big) \\
        &= \omega(x,y) \pmod{Z}.
    \end{align*}
\end{proof}
\begin{cor}
    The order of each element in $\ker(h)$ and $\ker(h^\top)$, divides $\omega$.
\end{cor}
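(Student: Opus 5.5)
This is a direct consequence of Lemma \ref{lem:scalar}. Let $g$ be an element of $\ker(h)\subseteq \mathcal{K}(S_\omega(\Gamma))$. Since $h(g)=0$ in $\mathcal{K}(\Gamma)$, applying the induced homomorphism $h^\top$ gives
\[
h^\top h(g)=h^\top(0)=0 .
\]
By Lemma \ref{lem:scalar}, $h^\top h$ is multiplication by $\omega$ on $\mathcal{K}(S_\omega(\Gamma))$, so $\omega g=0$. The order of $g$ therefore divides $\omega$.

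The argument for $\ker(h^\top)\subseteq\mathcal{K}(\Gamma)$ is symmetric. Take $g$ with $h^\top(g)=0$ and apply $h$. This gives $hh^\top(g)=0$. By the lemma, $hh^\top(g)=\omega g$, so $\omega g=0$ and again the order of $g$ divides $\omega$.

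No real obstacle is expected here. The only point that needs care is that Lemma \ref{lem:scalar} is an identity on the critical groups, not on the free modules $\ZZ^{E}$ and $\ZZ^{E_S}$. That is why we work throughout with the induced maps from the previous lemma, which are well defined because $h(B_S\oplus Z_S)\subseteq B\oplus Z$ and $h^\top(B\oplus Z)\subseteq B_S\oplus Z_S$. The congruences modulo $B_S$ and modulo $Z$ in the proof of Lemma \ref{lem:scalar} then become genuine equalities in the quotients $\ZZ^{E_S}/(B_S\oplus Z_S)$ and $\ZZ^E/(B\oplus Z)$. This establishes condition (1) in both cases.
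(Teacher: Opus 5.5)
Your proof is correct and follows the paper's own argument: compose with the other induced map and use Lemma \ref{lem:scalar} to get $\omega g = 0$, so the order of $g$ divides $\omega$. Your remark that the congruences modulo $B_S$ and $Z$ become equalities in the quotients $\ZZ^{E_S}/(B_S\oplus Z_S)$ and $\ZZ^E/(B\oplus Z)$ is a helpful justification that the paper leaves implicit.
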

\begin{proof}
    Let $x \in \ker(h)$ and $y \in \ker(h^\top)$. Then using Lemma \ref{lem:scalar}
    \[\omega x = h^\top h(x) = h^\top(0) =0\quad \text{and} \quad \omega y = hh^\top(y) = h(0) =0.\]
    By Lagrange's Theorem, this implies the order of $x$ and the order of $y$ both divide $\omega$.
\end{proof}\par
Now we will show (2) in both cases.
\begin{lem}
    If $m-n+c\geq 0$, then $\omega^{m-n+c}$ divides $|\ker(h)|$ and, if $m-n+c\leq 0$, then $\omega^{n-m-c}$ divides $|\ker(h^\top )|$.
\end{lem}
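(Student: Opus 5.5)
Both claims follow from Theorem \ref{thm:order} together with Lemma \ref{lem:scalar}, by comparing group orders along $h$ and $h^\top$. The key tool is the standard exact-sequence identity for a homomorphism $\phi:G\to H$ of finite abelian groups:
\[|G|\cdot|\operatorname{coker}\phi| = |H|\cdot|\ker\phi|.\]
This holds because $|G|=|\ker\phi|\,|\im\phi|$ and $|H|=|\im\phi|\,|\operatorname{coker}\phi|$.

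\textbf{Case $m-n+c\geq 0$.} Apply the identity to $h:\mathcal{K}(S_\omega(\Gamma))\to\mathcal{K}(\Gamma)$. Since $|\mathcal{K}(\cdot)|=\kappa(\cdot)$, Theorem \ref{thm:order} gives
\[|\ker h| = \omega^{m-n+c}\,|\operatorname{coker} h|,\]
so $\omega^{m-n+c}$ divides $|\ker(h)|$ immediately. No surjectivity of $h$ is needed; the cokernel factor only makes $|\ker h|$ larger.

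\textbf{Case $m-n+c\leq 0$.} Apply the identity to $h^\top:\mathcal{K}(\Gamma)\to\mathcal{K}(S_\omega(\Gamma))$. This gives
\[|\ker h^\top|\cdot\kappa(S_\omega(\Gamma))=\kappa(\Gamma)\cdot|\operatorname{coker}h^\top|.\]
Substituting $\kappa(\Gamma)=\omega^{n-m-c}\kappa(S_\omega(\Gamma))$ from Theorem \ref{thm:order} yields
\[|\ker h^\top| = \omega^{n-m-c}\,|\operatorname{coker} h^\top|,\]
which again gives the divisibility directly.

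The only point needing care is that all the groups are finite, so the order identities apply. This is immediate since each $\mathcal{K}$ is a finite abelian group by definition. It is also worth checking that $\kappa(\Gamma)\neq 0$, which holds because $\kappa$ counts spanning forests and $\Gamma$ always has at least one. So I expect no real obstacle, and the lemma is essentially a bookkeeping corollary of equation (\ref{eq:spantree1}).

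For context, the divisibility is compatible with Lemma \ref{lem:scalar}: since $hh^\top$ is multiplication by $\omega$, the cokernels are $\omega$-torsion-like. That fact is not needed here, though; it belongs to the later steps (1) and (3).
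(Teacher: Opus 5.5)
Your proof is correct and is essentially the paper's own argument: the paper's integer $a$ with $a|\im(h)|=|\mathcal{K}(\Gamma)|$ is exactly $|\operatorname{coker} h|$, and combining it with the First Isomorphism Theorem and Theorem \ref{thm:order} gives $|\ker(h)|=a\,\omega^{m-n+c}$, which is your order identity. You also write out the $h^\top$ case explicitly, which the paper only describes as a similar argument.
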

\begin{proof}
    We will start with the first case so assume $m-n+c \geq 0$ which implies that $\omega^{m-n+c}$ is an integer. Since $\im(h)$ is a subgroup of $\mathcal{K}(\Gamma)$, by Lagrange's Theorem $|\im(h)|$ divides $|\mathcal{K}(\Gamma)|$ so there exists an integer $a$ such that $a|\im(h)|=|\mathcal{K}(\Gamma)|$. So since the First Isomorphism Theorem implies $\mathcal{K}(S_\omega(\Gamma))/\ker(h) \cong \im(h)$ and Theorem \ref{thm:order} states $|\mathcal{K}(S_\omega(\Gamma))|=\omega^{m-n+c}|\mathcal{K}(\Gamma)|$, we have
    \begin{align*}
        \frac{|\mathcal{K}(S_\omega(\Gamma))|}{|\ker(h)|}&=|\im(h)| \\
        \frac{a|\mathcal{K}(S_\omega(\Gamma))|}{|\ker(h)|}&=|\mathcal{K}(\Gamma)| \\
        a\omega^{m-n+c}|\mathcal{K}(\Gamma)|&=|\ker(h)||\mathcal{K}(\Gamma)| \\
        a\omega^{m-n+c}&=|\ker(h)|.
    \end{align*}
    Thus $\omega^{m-n+c}$ divides $|\ker(h)|$. In the case of $m-n+c \leq 0$, $\omega^{n-m-c}$ is an integer and Theorem \ref{thm:order} implies $|\mathcal{K}(\Gamma)|=\omega^{n-m-c}|\mathcal{K}(S_\omega(\Gamma))|$. So a similar argument applies to show $\omega^{n-m-c}$ divides $|\ker(h^\top )|$.
\end{proof}
It now remains to show (3), that $\ker(h)$ can be generated by $m-n+c\geq0$ elements and $\ker(h^\top)$ can be generated by $n-m-c\geq 0$ elements. This also differs from the argument of the authors of \cite{reiner}. In the case of the edge subdivision graph $(\omega =2)$, it is provable by an induction argument that both $\mathcal{K}(\Gamma)$ and $\mathcal{K}(S_2(\Gamma))$ can be generated with $m-n+c$ elements since $m$ is the number of edges in this case. However, this fact does not generalize to any $\omega \geq 2$, and using SAGE we can easily find examples of graphs that require more generators. It is the same in the case that $m-n+c \leq 0$.\par
We leave this as a path for future research, along with examining the relationship between $\mathcal{K}(S_\omega(\Gamma))$ and $\mathcal{K}(C_\omega(\Gamma))$.

\chapter{Examples}
We will conclude with a discussion of certain families of graphs that have the clique regular property. These examples are widely studied in graph theory and represent concrete examples of the graph properties that we have been studying only theoretically until this point. Examining these families allows for a better understanding of how our results manifest in specific cases. 
\section{Orthogonal Array Block Graphs}
 An \textbf{orthogonal array}, denoted $OA(n,m)$, is an $n^2 \times m$ array with entries from an $n$-element set with the property that the rows of any $n^2 \times 2$ sub array consist of all $n^2$ possible pairs exactly once. The \textbf{block graph} of an orthogonal array is the graph with vertices as the $1 \times m$ row vectors of the $OA(n,m)$, where two vectors are adjacent if and only if they have the same entry in some coordinate position. This means they share the same entry in exactly one position, since by the construction of the orthogonal array no two vectors can share the same entry in more than one position. Figure \ref{fig:oa_block} shows an example of an orthogonal array and its block graph.\par
    \begin{figure}
    \begin{subfigure}{.5\textwidth}
      \centering

\begin{tabular}{|c|c|c|}
\hline
$1$ & $1$ & $1$ \\
\hline
$1$ & $2$ & $2$ \\
\hline
$1$ & $3$ & $3$ \\
\hline
$2$ & $1$ & $2$ \\
\hline
$2$ & $2$ & $3$ \\
\hline
$2$ & $3$ & $1$ \\
\hline
$3$ & $1$ & $3$ \\
\hline
$3$ & $2$ & $1$ \\
\hline
$3$ & $3$ & $2$ \\
\hline
\end{tabular}

      \caption{An OA$(3,3)$ and}
    
    \end{subfigure}
    \begin{subfigure}{.5\textwidth}
      \centering
      \includegraphics[width=0.7\linewidth]{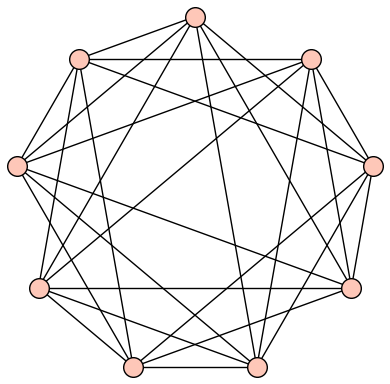}
      \caption{its Block Graph}
      
    \end{subfigure}
    \caption[An example of an orthogonal array and its block graph]{}
    \label{fig:oa_block}
    \end{figure}
A clique in an orthogonal array block graph is called a \textbf{canonical clique}, denoted $S_{ri}$, if every vector in the clique shares the same entry $i$ in the same column $r$. Clearly every maximal clique of this form has order $n$.
\begin{lem} \label{lem: OA}
    If $\Gamma$ is the block graph of  an $OA(n,m)$ and $n>(m-1)^2$, then every clique of order $n$ is a canonical clique.
\end{lem}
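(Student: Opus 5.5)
Let $K$ be a clique of order $n$ in the block graph $\Gamma$ of an $OA(n,m)$. I will show that some column $r$ has all vectors of $K$ sharing a common entry. The basic structure is that every edge of $\Gamma$ is labelled by the unique column $r$ in which its two endpoints agree. For a fixed row vector $x\in K$, the other $n-1$ vectors of $K$ are partitioned according to the column in which they agree with $x$. So there are classes $P_1,\dots,P_m$ with $\sum_r |P_r| = n-1$, where $P_r$ is the set of $y\in K\setminus\{x\}$ agreeing with $x$ in column $r$. Each $P_r\cup\{x\}$ is contained in the canonical clique $S_{r,x_r}$.

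The key counting step bounds how two different classes interact. Take $y\in P_r$ and $z\in P_s$ with $r\neq s$. Then $y$ and $z$ are adjacent but cannot agree in column $r$: otherwise $z_r = y_r = x_r$, and $z$ would agree with $x$ in both columns $r$ and $s$, which the orthogonal array forbids. Similarly they cannot agree in column $s$, so they agree in some third column $t\notin\{r,s\}$. Now fix $z\in P_s$. Distinct $y,y'\in P_r$ both agree with $x$ in column $r$, so $y_r = y'_r$. Hence they cannot also both agree with $z$ in the same column $t$, since then $y,y'$ would agree in two columns. So the map $y\mapsto t$ is injective from $P_r$ into $\{1,\dots,m\}\setminus\{r,s\}$, and therefore $|P_r|\le m-2$ whenever $P_s\neq\emptyset$ for some $s\neq r$.

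Now suppose $K$ is not canonical. Then for every $x$ at least two classes are nonempty, since if only $P_r$ were nonempty then $K\subseteq S_{r,x_r}$ and $K$ would be canonical. By the bound above, every nonempty class then has size at most $m-2$. At most $m$ classes can be nonempty, and a sharper count gives the required contradiction. If the nonempty classes are indexed by a set $J$ with $|J|\ge 2$, each of size at most $m-2$, then $n-1\le |J|(m-2)\le m(m-2)$, so $n\le (m-1)^2$. This contradicts the hypothesis $n>(m-1)^2$. Hence some $x$ has only one nonempty class, and $K$ is canonical, in fact equal to $S_{r,x_r}$ since both have order $n$.

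The main obstacle is verifying the injectivity claim carefully, in particular the use of the orthogonal array property that two distinct rows agree in at most one coordinate. I expect the rest to follow from the bookkeeping above. If the bound $m(m-2)$ turned out to be off, I would refine it by noting that $y\in P_r$ and $z\in P_s$ also exclude the columns used by other classes. However, the crude estimate already matches the hypothesis exactly.
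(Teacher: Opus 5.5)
Your proof is correct and complete. The paper does not actually prove this lemma; it only cites Godsil--Meagher (Corollary 5.5.3). Your proposal therefore supplies a self-contained argument where the paper has only a reference.

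The heart of your argument, $|P_r|\le m-2$, is the array-level form of a standard fact: a vertex $z$ outside a canonical clique $S_{r,i}$ has exactly $m-1$ neighbours in it, one for each column $t\neq r$. Since $z\in P_s$ lies outside $S_{r,x_r}$ and is adjacent to $x$ and to all of $P_r$ inside that clique, you get $|P_r|+1\le m-1$.

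Relying on the citation keeps the exposition short. Your route uses nothing beyond ``two distinct rows agree in at most one column,'' and it proves slightly more than stated. Every non-canonical clique has at most $1+m(m-2)=(m-1)^2$ vertices, with no hypothesis on $n$ at all. This bound is attained. For $m=3$, an intercalate in the Cayley table of $\mathbb{Z}_4$ (rows $\{0,2\}$, columns $\{0,2\}$) gives a non-canonical clique of order $4=n=(m-1)^2$. So the hypothesis $n>(m-1)^2$ cannot be weakened to $n\ge (m-1)^2$.

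You can also drop the hedging in your last paragraph. The injectivity step is fully justified as written. The estimate $n-1\le |J|(m-2)\le m(m-2)$ is exactly what is needed, so no sharper count is required.
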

The proof of this lemma comes from \cite[pg. 99]{godsil} Corollary 5.5.3.
\begin{cor} \label{thm:OA}
    If $\Gamma$ is the block graph of  an $OA(n,m)$ and $n > (m-1)^2$, then $\Gamma$ is $n$-clique regular and $C_n(\Gamma)$ is isomorphic to the complete $m$-partite graph where each independent set of vertices has order $n$.
\end{cor}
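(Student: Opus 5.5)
The plan is to use Lemma \ref{lem: OA} to identify the $n$-cliques, and then read off both claims from the orthogonal array axioms.

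\textbf{Step 1: the canonical cliques.} Each canonical clique $S_{ri}$ consists of the rows having entry $i$ in column $r$. By the OA property applied to columns $r$ and any other column $r'$, each of the $n$ symbols appears in column $r$ exactly $n$ times. So $S_{ri}$ has exactly $n$ vertices, and it is a clique because any two of its rows agree in column $r$. By Lemma \ref{lem: OA}, since $n>(m-1)^2$, these $mn$ sets $S_{ri}$ are \emph{all} the $n$-cliques of $\Gamma$. One should also confirm that $n=\omega(\Gamma)$ and that there is no larger clique. This is not strictly needed for $n$-clique regularity as defined, which only asks that each edge lie in a unique $n$-clique.

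\textbf{Step 2: clique regularity.} Take an edge $xy$. Two distinct rows agree in at most one coordinate, because agreeing in two columns $r,r'$ would repeat a pair in the $n^2\times 2$ subarray on those columns. Since $x$ and $y$ are adjacent, they agree in exactly one column $r$, say with common entry $i$. Then $xy\subseteq S_{ri}$. Any canonical clique $S_{r'i'}$ containing both $x$ and $y$ forces $x$ and $y$ to agree in column $r'$, so $r'=r$ and $i'=i$. Combined with Step 1, $S_{ri}$ is the unique $n$-clique containing $xy$.

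\textbf{Step 3: identifying $C_n(\Gamma)$.} The vertices of $C_n(\Gamma)$ are the $mn$ sets $S_{ri}$. Partition them by column $r$ into $m$ classes of size $n$.
\begin{itemize}
  \item Within one column, $S_{ri}$ and $S_{ri'}$ with $i\ne i'$ are disjoint, since a row has a single entry in column $r$. So each class is independent.
  \item For $r\ne r'$, the intersection $S_{ri}\cap S_{r'i'}$ is the set of rows with entry $i$ in column $r$ and entry $i'$ in column $r'$. By the OA property this set has exactly one element, so the two cliques are adjacent.
\end{itemize}
Hence $C_n(\Gamma)$ is the complete $m$-partite graph with parts of size $n$.

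The only genuine input is Lemma \ref{lem: OA}, which is assumed. I expect the main (minor) obstacle to be cleanly justifying that two rows share at most one entry and that each $S_{ri}$ has exactly $n$ elements. Both follow directly from the pair-uniqueness property of the array.
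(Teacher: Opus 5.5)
Your proposal is correct and follows essentially the same route as the paper. Both arguments use the canonical-clique lemma to identify the $n$-cliques with the sets $S_{ri}$, observe that adjacent rows agree in exactly one coordinate so each edge lies in exactly one $S_{ri}$, and then obtain the complete $m$-partite structure from disjointness within a column and the unique-pair property across columns (your checks that $|S_{ri}|=n$ and that distinct rows share at most one entry just make explicit what the paper treats as clear).
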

\begin{proof}
    If two vectors $v$ and $u$ are adjacent in $\Gamma$, then they share the same entry $i$ in some column $r$. This is the only entry that the two vectors share by the construction of the orthogonal array. So the only canonical clique the edge $vu$ is in is $S_{ri}$ and by Lemma \ref{lem: OA} this is the only $n$-clique the edge is in.\par
    Also by Lemma \ref{lem: OA}, the canonical cliques of $\Gamma$ form all the vertices in $C_n(\Gamma)$. Notice that if $i\neq j$ then for all columns $r$, $S_{ri}$ and $S_{rj}$ have an empty intersection so are not adjacent in $C_n(\Gamma)$. Also notice that by the definition of an orthogonal array, if $r\neq t$ then for all $i$ and $j$, $S_{ri}$ and $S_{tj}$ share the vector with entry $i$ in column $r$ and entry $j$ in column $t$ and so are adjacent in $C_n(\Gamma)$. So there are $m$ independent sets of order $n$ in $C_n(\Gamma)$ and each vertex in each set is adjacent to every other vertex of every other independent set. Thus, $C_n(\Gamma)$ is isomorphic to the complete $m$-partite graph where each independent set of vertices has order $n$.
\end{proof}
A \textbf{square rook graph} is the graph with vertices as the points in a square grid where two points are adjacent if and only if they are in the same row or column.
\begin{cor} \label{thm:rook}
    If $\Gamma$ is the square rook graph on $n^2$ vertices, then $\Gamma$ is $n$-clique regular and $C_n(\Gamma) \cong K_{n,n}$.
\end{cor}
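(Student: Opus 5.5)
The plan is to realize the square rook graph as the block graph of an orthogonal array and then apply Corollary \ref{thm:OA} with $m=2$. Take the $n^2\times 2$ array whose rows are all ordered pairs $(i,j)$ with $i,j\in\{1,\ldots,n\}$. Its only $n^2\times 2$ sub-array is the array itself, and it contains every pair exactly once, so it is an $OA(n,2)$. Two rows $(i,j)$ and $(i',j')$ are adjacent in the block graph exactly when $i=i'$ or $j=j'$. Identifying the row $(i,j)$ with the grid point in row $i$ and column $j$ therefore gives an isomorphism between the block graph and the square rook graph on $n^2$ vertices.

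Next I check the hypothesis $n>(m-1)^2$. With $m=2$ it reads $n>1$. Once it holds, Corollary \ref{thm:OA} says that $\Gamma$ is $n$-clique regular. It also says that $C_n(\Gamma)$ is the complete $2$-partite graph whose two independent sets each have order $n$, and that graph is $K_{n,n}$. Concretely, the canonical cliques are the $n$ rows and the $n$ columns of the grid. Two distinct rows are disjoint, as are two distinct columns, while every row meets every column in exactly one point.

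The only real obstacle is the degenerate case $n=1$, where the hypothesis $n>(m-1)^2$ fails. In that case the rook graph is a single vertex with no edges. It is then not clique regular under the paper's definition, which requires a nonempty edge set, so the statement should be read with $n\geq 2$. As a sanity check for $n=2$, the rook graph is $C_4$. It is $2$-clique regular, and its clique graph is its line graph $L(C_4)\cong C_4\cong K_{2,2}$, which agrees with the statement. For $n\ge 3$ one could also verify Lemma \ref{lem: OA} directly. An $n$-clique containing two points from different rows and different columns could contain only those two points plus at most two corner points, and $4<n$ fails only when $n\le 4$, so the direct argument would need some care in small cases. It is cleaner simply to rely on Corollary \ref{thm:OA}.
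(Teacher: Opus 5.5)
Your proof is correct and follows the paper's primary argument: you identify the rook graph with the block graph of the $OA(n,2)$ consisting of all ordered pairs, and apply Corollary~\ref{thm:OA} with $m=2$; your observation that the hypothesis $n>(m-1)^2$ forces $n\ge 2$, and that the statement genuinely fails for $n=1$, is a worthwhile addition the paper omits. One correction to your closing aside: two points in different rows and different columns are non-adjacent, so no clique contains both, and in fact any three pairwise adjacent vertices lie in a common row or column, so for $n\ge 3$ every $n$-clique is a full row or column with no small-case difficulty at all.
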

\begin{proof} 
    Since the square rook graph on $n^2$ vertices is the block graph of an $OA(n,2)$, the result follows from Corollary \ref{thm:OA}.\\
    The result also follows from Theorems \ref{thm:4cr} - \ref{thm:3ci} since the square rook graph on $n^2$ vertices is the line graph of $K_{n,n}$ which is $n$-regular and triangle-free.\\
    It also follows using the graph's spectrum since the square rook graph on $n^2$ vertices is an srg$(n^2, 2(n-1), n-2, 2)$ with spectrum \[2(n-1)^1,(n-2)^{2(n-1)}, -2^{(n-1)^2}.\] Using Remark \ref{rem:eigen}, the spectrum of $C_n(\Gamma)$ is \[n^1, 0^{2(n-1)}, -n^1\] and a classical result in spectral graph theory states this spectrum uniquely determines that $C_n(\Gamma) \cong K_{n,n}$.
\end{proof}
\section{Triangular Graphs}
 A \textbf{triangular graph}, denoted $T_n$, is the line graph of the complete graph on $n$ vertices, $K_n$. 
\begin{cor}
    If $n = 3$ or $n>4$, then $T_n$ is $(n-1)$-clique regular and $C_{n-1}(T_n) \cong K_n$.\\
\end{cor}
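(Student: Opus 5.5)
The plan is to reduce the statement to the line-graph results of Chapter 2, since $T_n = L(K_n)$. Every vertex of $K_n$ has degree $n-1$.

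For the clique regularity, I split by the value of $\omega = n-1$.

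\emph{Case $n>4$.} Here $\omega = n-1 \geq 4$. Every vertex of $K_n$ has degree $\omega$, so Theorem \ref{thm:4cr} gives directly that $L(K_n)=T_n$ is $(n-1)$-clique regular.

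\emph{Case $n=3$.} Here $\omega=2$, and $T_3 = L(K_3)\cong K_3$. This graph has a nonempty edge set, so it is trivially $2$-clique regular: every edge is itself a unique $2$-clique.

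The excluded case $n=4$ is exactly where this argument breaks. There $\omega=3$, and $K_4$ is not triangle-free, so Theorem \ref{thm:3cr} shows $T_4$ is not $3$-clique regular. This explains why the hypothesis excludes $n=4$.

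For the isomorphism $C_{n-1}(T_n)\cong K_n$, I again split into the same two cases.

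\emph{Case $n>4$.} $K_n$ is connected and $(n-1)$-regular with $n-1\geq 4$, so Theorem \ref{thm:4ci} gives $C_{n-1}(L(K_n))\cong K_n$ immediately.

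\emph{Case $n=3$.} $C_2(T_3)=L(K_3)\cong K_3$, using $C_2(\Gamma)\cong L(\Gamma)$ and $L(K_3)\cong K_3$.

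As an independent check for $n>4$, one could redo the argument of Theorem \ref{thm:4ci} by hand. By Remark \ref{remark}, the only $(n-1)$-cliques in $L(K_n)$ are the stars at the $n$ vertices of $K_n$. Any two such stars share the edge joining their centers, so the clique graph is complete on $n$ vertices.

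I do not expect a real obstacle. The only delicate points are the small cases: noting that $n=3$ falls under the $\omega=2$ convention rather than Theorem \ref{thm:3ci}, and justifying why $n=4$ fails. For the latter I would cite Theorem \ref{thm:3cr}, or observe directly that $L(K_4)$ contains triangles coming from the triangles of $K_4$, so each edge lies in two triangles.
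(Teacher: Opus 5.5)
Your proposal is correct and follows the paper's main argument: apply Theorems \ref{thm:4cr} and \ref{thm:4ci} to the $(n-1)$-regular graph $K_n$, with $n=4$ excluded because $K_4$ is not triangle-free (the paper also sketches an alternative spectral proof via Remark \ref{rem:eigen}). Your separate treatment of $n=3$ as the trivial $\omega=2$ case, where $C_2(T_3)\cong L(T_3)\cong L(K_3)\cong K_3$, is a worthwhile addition. Those two theorems require $\omega\geq 4$, so the paper's one-line citation does not literally cover $n=3$.
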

\begin{proof}
    This follows from Theorems \ref{thm:4cr} and \ref{thm:4ci} since $K_n$ is $(n-1)$-regular. Note that $T_4$ is not included since $K_4$ is $3$-regular but not triangle-free.\\
    Alternatively we can prove this using the spectrum of $T_n$ which is \[2(n-2)^1,\;(n-4)^{n-1},\;-2^{\frac{n(n-3)}{2}}\]
    because $T_n$ is an srg$(\frac{1}{2}n(n - 1),\; 2(n - 2),\; n - 2,\; 4)$.
    Also since $T_n$ is also $(n-1)$-clique regular and $2(n-2)$-regular, Remark \ref{rem:eigen} gives the spectrum of $C_{n-1}(T_n)$ as \[ (n-1)^1,\; -1^{n-1}.\]
    A well known result in spectral graph theory is that the only regular graph with this spectrum is the complete graph on $n$ vertices. So $C_{n-1}(T_n) \cong K_n$.
\end{proof}

\section{Generalized Quadrangle Collinearity Graphs}
 A \textbf{Generalized Quadrangle} $GQ(s,t)$ is a point-line incidence structure satisfying the following properties for some $s,t \geq 1$: \begin{itemize}
    \item every line has $s+ 1$ points,
    \item every point lies on $t + 1$ lines,
    \item there is at most one point common to any two distinct lines, and
    \item if point $P$ is not on line $\ell$, then there is a unique line incident with $P$ meeting $\ell$.
\end{itemize}
 The \textbf{collinearity graph} of a $GQ(s,t)$ is the graph where the vertices are the set of points in the $GQ(s,t)$ where two points are adjacent if and only if there is some line in the $GQ(s,t)$ that they are commonly incident to. Figure \ref{fig:gq} shows an example of a generalized quadrangle and it's collinearity graph.
\begin{figure}
    \begin{subfigure}{.5\textwidth}
      \centering
    \includegraphics[width=0.7\linewidth]{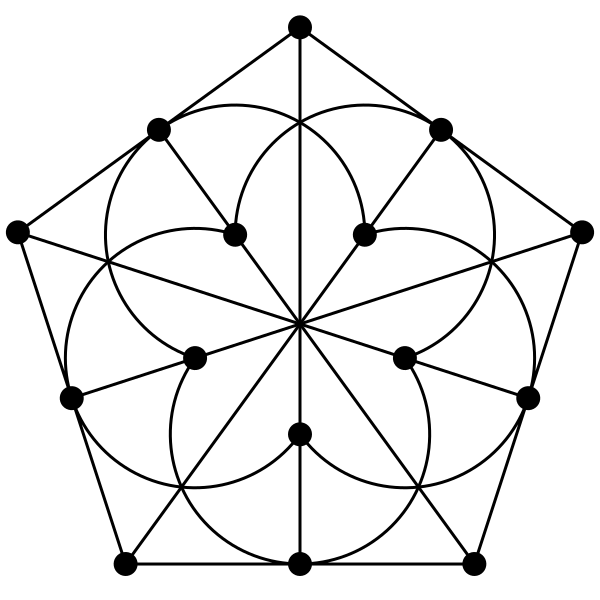}
      \caption{A GQ$(2,2)$ and}
      
    \end{subfigure}
    \begin{subfigure}{.5\textwidth}
      \centering
      \includegraphics[width=0.7\linewidth]{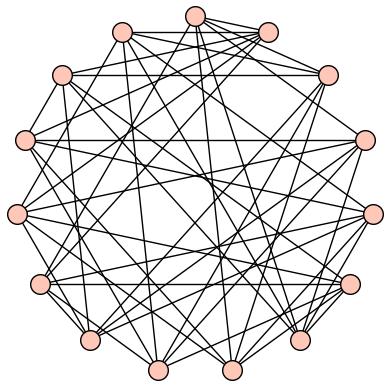}
      \caption{its Collinearity Graph}
      
    \end{subfigure}
    \caption[An example of a generalized quadrangle and its collinearity graph]{}
    \label{fig:gq}
    \end{figure}
    
\begin{cor} \label{thm:gq}
    Suppose $\Gamma$ is the collinearity graph of a $GQ(s,t)$. Then $\Gamma$ is $(s+1)$-clique regular and a regular clique assembly.
\end{cor}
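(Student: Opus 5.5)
The plan is to show directly that the lines of the $GQ(s,t)$ are exactly the $(s+1)$-cliques of $\Gamma$. From this, $(s+1)$-clique regularity follows at once. Then I will establish the edge-regularity with $\lambda = s-1$ and apply Theorem \ref{thm:rca1}.

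\textbf{Step 1: the local structure.} Each point lies on $t+1$ lines, each carrying $s$ other points. Two distinct lines meet in at most one point, so these neighbours are all distinct. Hence $\Gamma$ is $k$-regular with $k=s(t+1)$.

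Two adjacent points $x,y$ lie on a unique common line $\ell$, again because two lines share at most one point. The points of $\ell$ form an $(s+1)$-clique containing the edge $xy$. Now let $z$ be a common neighbour of $x$ and $y$ that is not on $\ell$. Since $z\notin\ell$, the GQ axiom gives a \emph{unique} line through $z$ meeting $\ell$. But $z$ is collinear with both $x$ and $y$, and the lines $zx$ and $zy$ are distinct because $z\notin\ell$. So two distinct lines through $z$ meet $\ell$, which is a contradiction. Therefore the common neighbours of $x$ and $y$ are exactly the $s-1$ other points of $\ell$, and $\Gamma$ is an erg$(n,s(t+1),s-1)$.

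\textbf{Step 2: maximum cliques are lines.} Take any clique $K$ of $\Gamma$ and two of its points $x,y$, lying on the line $\ell$. Every other vertex of $K$ is a common neighbour of $x$ and $y$, so by Step 1 it lies on $\ell$. Hence every clique with at least two vertices is contained in a line. It follows that $\omega(\Gamma)=s+1$ and that the $(s+1)$-cliques are precisely the lines. (For $s=0$ the statement is degenerate, but the definition already forces $s\ge 1$.)

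\textbf{Step 3: conclusion.} Each edge lies on exactly one line, so by Step 2 it lies in exactly one $(s+1)$-clique; thus $\Gamma$ is $(s+1)$-clique regular. Combining this with Step 1, $\Gamma$ is an erg$(n,\Delta(\Gamma),(s+1)-2)$. Theorem \ref{thm:rca1} then shows that $\Gamma$ is a regular clique assembly. Alternatively, Step 2 directly gives that every maximal clique is a line, hence maximum.

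The only delicate point is the common-neighbour argument in Step 1. It is where the GQ axiom is used and is essentially the whole proof, so I will state it carefully, including the observation that $zx$ and $zy$ are genuinely distinct lines.
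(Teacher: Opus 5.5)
Your proof is correct and follows the same route as the paper. In both, lines are exactly the $(s+1)$-cliques, each edge lies on a unique line, and edge-regularity with $\lambda=s-1$ plus Theorem \ref{thm:rca1} gives the regular clique assembly. The only difference is that you derive the no-triangle property and $\lambda=s-1$ directly from the GQ axioms, whereas the paper simply cites them, the latter via the known srg parameters.
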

\begin{proof}
     Observe that the $s+1$ points on any line in the $GQ(s,t)$ induce an $(s+1)$-clique in $\Gamma$. It follows from the definition that generalized quadrangles contain no triangles, so the only $(s+1)$-cliques in $\Gamma$ are those induced by the set of points on some line. So if points $v$ and $u$ are adjacent in $\Gamma$ then they are commonly incident to some line $\ell$ in $GQ(s,t)$. So the edge $uv$ is in the $(s+1)$-clique induced by the points on line $\ell$ and points $u$ and $v$ are commonly incident to no other lines, so the edge $uv$ is in a unique $(s+1)$-clique in $\Gamma$.\\
    Since the collinearity graph of a $GQ(s,t)$ is an srg$((s+1)(st+1), s(t+1), s-1, t+1)$, then $\Gamma$ is a regular clique assembly by Theorem \ref{thm:rca1}.
\end{proof}
For our final result, recall that the \textbf{dual} of a point line incidence structure is the structure formed by swapping the points and the lines of the original structure. If $\beta$ is a point line incidence structure, then point $P$ is on line $\ell$ in $\beta$ if and only if point $\ell$ is on line $P$ in the dual of $\beta$. In the case of generalized quadrangles, it turns out that the dual of a $GQ(s,t)$ is a $GQ(t,s)$ and so has a strongly regular collinearity graph. The following theorem gives an alternate proof that the dual of a $GQ(s,t)$ has a strongly regular collinearity graph.
\clearpage
\begin{cor} \label{thm:gq1}
    If $\Gamma$ is the collinearity graph of a $GQ(s,t)$, then $C_{s+1}(\Gamma)$ is isomorphic to the collinearity graph of the dual of the $GQ(s,t)$ structure that formed $\Gamma$ and is strongly regular with parameters 
    \[\textnormal{srg}(\left(t+1\right)\left(st+1\right), \; t(s+1),\; t-1, \;s+1).\]
\end{cor}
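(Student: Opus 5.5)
The plan has two independent parts: an explicit isomorphism with the dual collinearity graph, and a parameter computation through Theorem \ref{thm:srg}.

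\textbf{The isomorphism.} By Corollary \ref{thm:gq} and its proof, $\Gamma$ is $(s+1)$-clique regular, and its $(s+1)$-cliques are exactly the point sets of the lines of the $GQ(s,t)$. So I will use the map sending each vertex of $C_{s+1}(\Gamma)$ (a clique, i.e.\ a line $\ell$) to the point $\ell$ of the dual structure. This is a bijection onto the points of the dual.

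Two cliques are adjacent in $C_{s+1}(\Gamma)$ exactly when they share a vertex of $\Gamma$. That means the two lines share a point $P$ of the $GQ(s,t)$. In the dual, this says the points $\ell_1,\ell_2$ both lie on the line $P$, i.e.\ they are collinear. Conversely, collinearity of $\ell_1,\ell_2$ in the dual gives a common point of $\ell_1,\ell_2$ in the original. The map therefore preserves adjacency in both directions, which proves the first claim.

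\textbf{The parameters.} Here I would not invoke the known fact that the dual is a $GQ(t,s)$, since the statement advertises an alternate proof. Instead I apply Theorem \ref{thm:srg} with $\omega=s+1$, $k=s(t+1)$, $\lambda=s-1=\omega-2$, $\mu=t+1$ and $n=(s+1)(st+1)$.

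The strong regularity criterion asks for $s_{\min}=-k/(\omega-1)=-(t+1)$. From Theorem \ref{thm:srgspec}, $\lambda-\mu=s-t-2$ and $k-\mu=st-1$. The discriminant is then $(s-t-2)^2+4(st-1)=(s+t)^2-4s+4t+4-4=(s+t)^2+4(t-s)$. Checking the algebra: $(s-t-2)^2=s^2+t^2+4-2st-4s+4t$, and adding $4st-4$ gives $(s+t)^2-4s+4t=(s+t+2)^2-4(s+t)-4-4s+4t\ldots$. The cleaner route is simply to verify that $s-1$ and $-(t+1)$ are the roots: their sum is $s-t-2$ and their product is $-(s-1)(t+1)=-(st-1+s-t)$. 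That product does not match $-(st-1)$, so I need to recheck.

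The correct collinearity parameters are $\mu=t+1$ and $k-\mu=st+s-t-1=(s-1)(t+1)$. So $r=s-1$ and $s_{\min}=-(t+1)$ are indeed the roots, with $r+s_{\min}=s-t-2=\lambda-\mu$. Hence $s_{\min}=-k/(\omega-1)$ and $C_{s+1}(\Gamma)$ is strongly regular. (If the GQ is non-boring; the boring degenerate cases should be noted or handled separately.)

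The formulas of Theorem \ref{thm:srg} then give:
\begin{itemize}
\item vertex count $nk/(\omega(\omega-1))=(s+1)(st+1)s(t+1)/((s+1)s)=(t+1)(st+1)$;
\item degree $(s+1)(t+1-1)=t(s+1)$;
\item $\lambda^*=(t+1)-2=t-1$;
\item $\mu^*=(t+1)+(s+1)-(t+1)=s+1$.
\end{itemize}
These are exactly the stated parameters.

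The only real obstacle is the routine eigenvalue bookkeeping and confirming the non-boring hypothesis of Theorem \ref{thm:srg}. If the GQ is boring (e.g.\ $t=0$ or $s=0$ degeneracies), the GQ conditions with $s,t\ge1$ make it non-boring, since $\mu=t+1>0$ and $\Gamma$ is not complete.
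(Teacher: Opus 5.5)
Your proposal is correct and is essentially the paper's proof: the line-to-clique bijection gives the isomorphism with the dual collinearity graph, and Theorem \ref{thm:srg} with $\omega=s+1$, smallest eigenvalue $-(t+1)=-k/(\omega-1)$, and $\lambda=s-1=\omega-2$ yields the stated parameters. In a clean write-up, delete the abandoned discriminant computation (the slip was writing $k-\mu=st-1$; the correct value $k-\mu=(s-1)(t+1)$ is what makes $s-1$ and $-(t+1)$ the roots). Also phrase the non-boring check as ``$\mu>0$ and $\Gamma$ is not complete, so $\Gamma$ has exactly three distinct eigenvalues,'' which is all Theorem \ref{thm:srg} actually uses---this covers $s=1$, where $\Gamma\cong K_{t+1,t+1}$ is complete bipartite and would otherwise count as a trivial srg.
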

\begin{proof}
    Let $\Gamma$ be the collinearity graph of a $GQ(s,t)$. As shown in Corollary \ref{thm:gq}, there exists a bijection between the lines of the $GQ(s,t)$ and the $(s+1)$-cliques of $\Gamma$ that sends a line in the $GQ(s,t)$ to the set of points in $\Gamma$ that are incident to that line. Since the lines in the $GQ(s,t)$ form the points in the structure's dual, this is a bijection between the vertices of $C_{s+1}(\Gamma)$ and the points on the collinearity graph of the dual of the $GQ(s,t)$. We will show this bijection is an isomorphism. If two points $u$ and $v$ are adjacent in the collinearity graph of the dual of the $GQ(s,t)$, then they are commonly incident to some line $\ell$ in the dual of the $GQ(s,t)$. This means that point $\ell$ is incident to lines $u$ and $v$ in the $GQ(s,t)$ and so the $(s+1)$-cliques they form in $\Gamma$ share a vertex and are adjacent in $C_{s+1}(\Gamma)$. If these two points are not adjacent in the collinearity graph of the dual of the $GQ(s,t)$, then there is no line that they are commonly incident to. So in the $GQ(s,t)$, lines $u$ and $v$ share no common point and so their $(s+1)$-cliques in $\Gamma$ share no vertices and are therefore not adjacent in $C_{s+1}(\Gamma)$. So the collinearity graph of the dual of the $GQ(s,t)$ that formed $\Gamma$ is isomorphic to the $(s+1)$-clique graph of $\Gamma$.\\
    Now we will show that $C_{s+1}(\Gamma)$ is strongly regular. As above, it is well known that $\Gamma$ is an srg$((s+1)(st+1),s(t+1),s-1, t+1)$ so it has the spectrum \[s(t+1)^1,(s-1)^{\frac{st(t+1)(s+1)}{s+t}},(-t-1)^{\frac{s^2(st+1)}{s+t}}.\]
    Since $\Gamma$ is $(s+1)$-clique regular and a regular clique assembly from Corollary \ref{thm:gq}, and the smallest eigenvalue of $\Gamma$ is equal to $-\frac{s(t+1)}{(s+1)-1}=-t-1$, then Theorem \ref{thm:srg} tells us that $C_{s+1}(\Gamma)$ is strongly regular with parameters as above.
\end{proof}

\appendix
\titleformat{\chapter}[display]
  {\normalfont\bfseries\centering\Huge}
  {Appendix \thechapter} 
  {0em}
  {}
\chapter{Finding Feasible Locally Linear Strongly Regular Parameter Sets} \label{ap:llsrg}
As discussed in section \ref{sec:localsrgs}, to see if our necessary condition would eliminate any feasible locally linear strongly regular parameter sets, we needed a way to find a large number of these sets. Strongly regular graphs have four parameters $(n,k,\lambda,\mu)$, but they must satisfy the equation $(n-k-1)\mu=k(k-\lambda-1)$. This restriction reduces the feasible parameter space from four to three dimensions, and since the locally linear restriction also forces $\lambda=1$, we need only check in a two dimensional space.\par
This means, we could write an algorithm that uses nested loops to iterate over only two of the parameters $k$ and $n$, checks if the $\mu$ value implied by the equation\linebreak$(n-k-1)\mu=k(k-2)$ is a non-negative integer, and finally checks that the resulting parameter set's spectrum, given by equations (\ref{eq:srgspec}), are integral with positive integer multiplicities. Thus, finding all parameters with $k$ less than some large value $K$ in this manner, would have an average time complexity of $O(K^2)$.\par
We employ a method which is faster than this naive method, shown in Algorithm \ref{alg:localpara}. Instead of iterating over $k$ and $n$, we solve equation (\ref{eq:srgspec}) for $n=\frac{k(k-2)}{\mu}+k+1$, and notice this implies $\mu$ must divide $k(k-2)$ since $n$ must be an integer. So while our algorithm's outer loop still iterates over all $k$ less than $K$, the inner loop needs only to iterate over the $\mu$ which are divisors of $k(k-2)$ and also less than $k$.\par
To enumerate all possible values of $\mu$, we define a Python function which we will denote here as MuSet$(k)$. It takes as input an integer $k$, and outputs all the positive divisors of $k(k-2)$ which are less than $k$. The function primarily uses the prime factorization function \texttt{factorint} from the Python library SymPy, which for an integer $k$, finds its prime factorization with an average time complexity of $O(\sqrt{k})$ \cite{SymPy}. Since we need to find divisors of $k(k-2)$, we use \texttt{factorint} to find the prime factorizations of $k$ and $k-2$ individually which each have a runtime of $O(\sqrt{k})$. So factorizing $k(k-2)$ in this manner means MuSet$(k)$ has a runtime of $O(\sqrt{k})$.\par
Since on average for each $k$ up to $K$, the size of MuSet$(k)$ is $O(\log K)$ \linebreak \cite[Theorem 3.3]{apostol}, this is also the average time complexity of the Inner Loop in \linebreak Algorithm \ref{alg:localpara}. So we get that the runtime of the Outer Loop in Algorithm \ref{alg:localpara} is given by $O(K\sqrt{K})+O(K\log K)=O(K^{1.5})$, which is faster than the naive method's runtime of $O(K^2)$ as desired.

\begin{algorithm}[htpb]
\caption{Enumerate Feasible Locally Linear SRG Parameter Sets}
\label{alg:localpara}
\DontPrintSemicolon
\SetAlgoLined
\KwIn{Max value of $k$, $K$}
\KwOut{Set of Feasible Locally Linear SRG Parameter Sets with $k\leq K$}

FeasibleParameters $\gets \{\}$\;
\For{$k \gets 1$ \KwTo $K$ \hspace{1.25cm} \tcp{Outer Loop}} { 
    \For{$\mu \in$ \textnormal{MuSet}$(k)$ \quad \tcp{Inner Loop}} {
    $n\gets \frac{k(k-2)}{\mu}+k+1$\;
    \If{\textnormal{$r,s,f,g$ given by equations (\ref{eq:srgspec}) are integers and $f,g$ are positive}}{
            append $(n,k,1,\mu)$ to FeasibleParameters
        }
}}
\Return{\textnormal{FeasibleParameters}}\;
\end{algorithm}

\clearpage
\addcontentsline{toc}{chapter}{Bibliography}
\bibliographystyle{plainurl}
\bibliography{refs}
\end{document}